\numberwithin{equation}{section} 
\newtheorem{theorem}{Theorem}[section]
\newtheorem{corollary}[theorem]{Corollary}
\newtheorem{lemma}[theorem]{Lemma}
\newtheorem{proposition}[theorem]{Proposition}
 \theoremstyle{definition} 
 \newtheorem{definition}[theorem]{Definition}
\newtheorem{example}[theorem]{Example}
\newtheorem{examples}[theorem]{Examples}
 \newtheorem{remark}[theorem]{Remark}
\newcommand{\R}{\mathbb{R}}	
\newcommand{\N}{\mathbb{N}} 
\renewcommand{\phi}{\varphi}
\newcommand\restr[1]{\raisebox{-.5ex}{$|$}_{#1}}	
\newcommand{\dx}{\,\mathrm{d}x}	
\newcommand{\dy}{\,\mathrm{d}y}	
\newcommand{\ds}{\,\mathrm{d}S}	
\newcommand{\nnu}{\bm{\nu}}	
\newcommand{\norm}[1]{\left\lVert #1 \right\lVert}
\newcommand{\abs}[1]{\left| #1 \right|}
\newcommand{\DCK}{\mathcal{D}^{1,2}(\overline{\R^N_+}\setminus K)}	
\newcommand{\DC}{\mathcal{D}^{1,2}(\overline{\R^N_+})}	
\newcommand{\HO}[2]{H^1_{0,#2}(#1)}
\newcommand{\sub}{\subseteq}
\DeclareMathOperator{\supp}{\mathrm{supp}}
\DeclareMathOperator{\Capa}{\mathrm{Cap}}
\DeclareMathOperator{\capa}{\mathrm{cap}}
\DeclareMathOperator{\Span}{\mathrm{span}}
\DeclareMathOperator{\dist}{\mathrm{dist}}
\DeclareMathOperator{\dive}{\mathrm{div}}
\newenvironment{bvp}{\left\{\begin{aligned}  }{\end{aligned}\right.}
\begin{document}

\title[Eigenvalues with moving mixed boundary conditions]{Eigenvalues of the Laplacian with moving mixed boundary conditions: the case of disappearing Dirichlet region}

\author[V. Felli]{Veronica Felli}
\address{Veronica Felli
\newline \indent Dipartimento di Matematica e Applicazioni, Universit\`a di Milano–Bicocca
\newline\indent Via Cozzi 55, 20125 Milano, Italy}
\email{veronica.felli@unimib.it}

\author[B. Noris]{Benedetta Noris}
\address{Benedetta Noris
\newline \indent Dipartimento di Matematica, Politecnico di Milano\newline\indent Piazza Leonardo da Vinci 32, 20133 Milano, Italy}
\email{benedetta.noris@polimi.it}

\author[R. Ognibene]{Roberto Ognibene}
\address{Roberto Ognibene
\newline \indent Dipartimento di Matematica e Applicazioni, Universit\`a di Milano–Bicocca
\newline\indent Via Cozzi 55, 20125 Milano, Italy}
\email{roberto.ognibene@unimib.it}


\date{}

\begin{abstract}
  	In this work we consider the homogeneous Neumann eigenvalue problem for the Laplacian on a bounded Lipschitz domain and a singular perturbation of it, which consists in prescribing zero Dirichlet boundary conditions on a small subset of the boundary. 
 We first describe the sharp asymptotic behaviour of a perturbed	eigenvalue, in the case in which it is converging to a simple eigenvalue of the limit Neumann problem. The first term in the asymptotic expansion turns out to depend on the Sobolev capacity of the subset where the perturbed eigenfunction is vanishing. Then we focus on the case of Dirichlet boundary conditions imposed on a subset which is scaling to a point;
by a blow-up analysis for the capacitary potentials, we detect the vanishing order of the Sobolev capacity of such shrinking Dirichlet boundary  portion.
\end{abstract}
	
	\maketitle
	
{\bf Keywords.} Asymptotics of Laplacian eigenvalues; Mixed boundary conditions; Singular perturbation of domain; Capacity.

\medskip 

{\bf MSC classification.} 35J25; 35P20; 35B25.

\section{Introduction}\label{sec:intr}

The present work concerns the eigenvalue problem for the Laplacian with mixed Dirichlet-Neumann homogeneous boundary conditions. More in particular, our attention is devoted to the study of the behaviour of an eigenvalue of the mixed problem when the region where Dirichlet boundary conditions are prescribed is disappearing, in a suitable sense that will be specified later. 
Actually, the methods developed in the present paper to derive eigenvalue asymptotics under perturbed mixed boundary conditions turn out to be quite flexible and capable of treating also more general kinds of perturbation, e.g. the eigenvalue problem for the Neumann-Laplacian with a shrinking hole in the interior of the domain  where homogeneous Dirichlet boundary conditions are assigned.

Let us introduce some basic assumptions and the functional setting. Let $\Omega\subseteq \R^N$ (with $N\geq 2$) be an open, bounded, Lipschitz and connected set and let $K\subseteq \overline{\Omega}$ be compact. Let $c\in L^\infty(\R^N)$ be such that 
\begin{equation}\label{eq:c_assumption}
c(x)\geq c_0>0 \quad\text{a.e. in }\R^N,\quad\text{for some }c_0\in\R.
\end{equation}
We define the bilinear form  $q=q_c:H^1(\Omega)\times H^1(\Omega)\to\R$ as 
\begin{equation}\label{eq:scalar_prod}
 q(u,v):=\int_{\Omega}(\nabla u\cdot\nabla v+cuv)\dx \quad\text{for any $u,v\in H^1(\Omega)$}.
\end{equation}
For simplicity of notation we denote by $q(\cdot)$ also the quadratic form corresponding to \eqref{eq:scalar_prod}, i.e. $q(u)=q(u,u)$. Thanks to assumption \eqref{eq:c_assumption}, the square root of the quadratic form $q(\cdot)$ is a norm on $H^1(\Omega)$, equivalent to the standard one
\[
\norm{u}_{H^1(\Omega)}=\left(\int_{\Omega}(\abs{\nabla u}^2+u^2)\dx \right)^{1/2}.
\]
We also introduce the Sobolev space $\HO{\Omega}{K}$ defined as the closure in $H^1(\Omega)$ 
of $C_c^\infty(\overline{\Omega}\setminus K)$.
We observe that, if $\partial\Omega$ is smooth and $K$ is a regular submanifold of $\partial\Omega$, 
the space $\HO{\Omega}{K}$ can be characterized as
\[
\HO{\Omega}{K}=\{u\in H^1(\Omega)\colon u=0~\text{on }K \},
\]
where $u=0$ on $K$, for functions in $H^1(\Omega)$, is meant in the trace sense, see \cite{Bernard2011}. Defining $q_K$ as the restriction of the form $q$ to $\HO{\Omega}{K}$, we say that $\lambda\in\R$ is an \emph{eigenvalue} of $q_K$ if there exists $u\in\HO{\Omega}{K}$, $u\not\equiv 0$, called \emph{eigenfunction}, such that
\begin{equation}\label{eq:weak_mixed}
q_K(u,v)=\lambda (u,v)_{L^2(\Omega)}\quad\text{for all }v\in \HO{\Omega}{K},
\end{equation}
where $(\cdot,\cdot)_{L^2(\Omega)}$ is the usual scalar product in $L^2(\Omega)$. From classical spectral theory we have that problem \eqref{eq:weak_mixed} admits a diverging sequence of positive eigenvalues
\[
0<\lambda_1(\Omega;K)< \lambda_2(\Omega;K)\leq \cdots\leq \lambda_n(\Omega;K)\leq \cdots,
\]
where each one is repeated as many times as its multiplicity. Moreover, we denote by $(\phi_n(\Omega;K))_n$ a sequence of eigenfunctions, which we choose so that it forms an orthonormal family in $L^2(\Omega)$. Hereafter we denote, for any integer $n\in\N_*$,
\begin{equation}\label{eq:def_neu_eigen}
\lambda_n:=\lambda_n(\Omega;\emptyset),\quad \phi_n:=\phi_n(\Omega;\emptyset),
\end{equation}
where $\N_*:=\N\setminus\{0\}$. 
We notice that the connectedness of the domain $\Omega$ is not a restrictive assumption, since the spectrum of $q_K$ in a non connected domain is the union of the spectra on the single connected components. We also point out that the assumption \eqref{eq:c_assumption} is not substantial and it can be dropped, since, up to a translation of the spectrum, we can recover a coercive form as in \eqref{eq:scalar_prod}. Besides, we notice that, in the particular case when $K$ is the empty set and $c(x)\equiv c>0$, $(\lambda_n(\Omega;\emptyset)-c)_n$ coincides with the sequence of eigenvalues of the standard Laplacian with homogeneous Neumann boundary condition. 
If  $K$ is  smooth (e.g. if $K$ is the closure of a smooth open set of $\R^N$ or
a regular submanifold of $\partial\Omega$), problem \eqref{eq:weak_mixed} admits the following classical formulation
\begin{equation}\label{eq:mixed}
\begin{bvp}
-\Delta u+cu &=\lambda u, &&\text{in }\Omega\setminus K, \\
u&=0 , &&\text{on } K,\\
\frac{\partial u}{\partial \nnu}&=0, &&\text{on } \partial\Omega\setminus K.
\end{bvp}
\end{equation}
When $K\sub \partial\Omega$, \eqref{eq:mixed} is an elliptic problem with mixed Dirichlet-Neumann homogeneous boundary conditions and one can interpret the spectrum $(\lambda_n(\Omega;K))_n$ as the square roots of the frequencies of oscillation of an elastic, vibrating membrane, whose boundary is clamped on $K$ and free in the rest of $\partial\Omega$. 

In this paper we 
start from the unperturbed situation corresponding to
the Neumann eigenvalue problem, i.e. the case $K=\emptyset$, and then we introduce a singular perturbation of it, which consist in considering a ``small'', nonempty $K\sub\overline{\Omega}$ and zero Dirichlet boundary conditions on it. Our aim is to study the eigenvalue variation due to this perturbation and to find the sharp asymptotics of the perturbed eigenvalue, in the limit when $K$ is ``disappearing'' as a  function of a certain parameter.

A detailed analysis in dimension 2 has been performed in \cite{Gadylshin1993}, where
Ga\-dyl’shin
investigated the case in which the perturbing set is a segment of length $\epsilon\to 0^+$ contained in the boundary of the domain: a (possibly multiple) eigenvalue of the limit Neumann problem is considered and the full asymptotic expansion of the perturbed eigenvalues is provided, see Theorems 1 and 2 in \cite{Gadylshin1993}. These expansions strongly depend on the vanishing order of the limit eigenfunctions in the point of the boundary where the segment is concentrating. In \cite{Gadylshin1993} a complete pointwise expansion of the perturbed eigenfunctions is provided as well. Moreover, in \cite{Gadyl'shin1992},  Gadyl’shin considered, again in dimension 2, the complementary problem, i.e. the case in which the portion of the boundary where Neumann conditions are prescribed is vanishing. For this problem, in \cite{Gadyl'shin1992}, the splitting of a multiple, limit eigenvalue is proved, together with a full asymptotic expansion of the perturbed eigenvalues. In the same framework, in \cite{Felli2018} simple limit eigenvalues were considered, with the derivation of  a more explicit expression of the coefficient of the  leading term in the expansion stated in \cite{Gadyl'shin1992}. Besides, 
\cite{Felli2018} contains a blow-up convergence result for the scaled, perturbed eigenfunction and  some applications to eigenvalue problems for operators with Aharonov-Bohm potentials.

On the other hand, for arbitrary dimensions,  \cite{Courtois1995} and \cite{AFHL} treated the eigenvalue problem for the Dirichlet-Laplacian under perturbations consisting  in making a hole in the interior of the domain and letting it shrink. In particular, in \cite{AFHL} (and in its nonlocal counterpart \cite{AFN}), where arguments and techniques that inspired the ones developed in the present work were introduced, the sharp asymptotic behaviour of perturbed eigenvalues is described. We also mention \cite{Colorado2003}, which concerns the spectral stability of the first eigenvalue, in both  cases of shrinking  Neumann and Dirichlet part, and \cite{Leonori2018}, where again the first eigenvalue of the mixed problem is considered, but in a nonlocal framework. 

\section{Statement of the main results}
In this section we give the basic definitions used in our work and we present our main results. Let us recall that,  throughout the paper, $\Omega$ denotes an open, bounded, Lipschitz and connected subset of $\R^N$, where $N\geq 2$.

As in 
\cite{AFHL,AFN,Courtois1995}, the quantity that measures the ``smallness'' of the perturbation set $K\sub\overline{\Omega}$, which is suitable for the development of an eigenvalue  stability  theory for our problem, is a notion of capacity, as defined below.
\begin{definition}\label{def:capacity}
	Let $K\subseteq \overline{\Omega}$ be compact. We define the \emph{relative Sobolev capacity} of $K$ in $\overline{\Omega}$ as follows
	\begin{equation*}
	\Capa_{\bar{\Omega}}(K):=\inf\left\{ \int_{\Omega}(\abs{\nabla u}^2+u^2)\dx\colon u\in H^1(\Omega),~u-1\in \HO{\Omega}{K} \right\}.
	\end{equation*}
\end{definition}
We refer to \cite{Arendt2003} for the mathematical description of this set function. A first taste of the fact that 
the relative Sobolev capacity defined above is a good perturbation parameter
for our purposes is given by Proposition \ref{prop:Kequiv}, which states that the space $\HO{\Omega}{K}$ coincides with $H^1(\Omega)$ if and only if $\Capa_{\bar{\Omega}}(K)=0$. This means that zero capacity sets are negligible for $H^1$ functions. Furthermore, the following theorem yields continuity of perturbed eigenvalues when $\Capa_{\bar{\Omega}}(K)\to 0$.
\begin{theorem}\label{thm:cont_eigen}
	Let $K\subseteq\overline{\Omega}$ be compact and let $\lambda_n(\Omega;K)$ be an eigenvalue of problem \eqref{eq:weak_mixed} for some $n\in\N_*$. Let also $\lambda_n$ be as in \eqref{eq:def_neu_eigen}. Then there exist $C>0$ and $\delta>0$ (independent of $K$) such that, if $\Capa_{\bar{\Omega}}(K)<\delta$, then
	\[
	0\leq \lambda_n(\Omega;K)-\lambda_n\leq C \left(\Capa_{\bar{\Omega}}(K)\right)^{1/2}.
	\]
\end{theorem}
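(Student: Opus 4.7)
The plan is to establish the two inequalities separately via the Courant--Fischer min-max principle. The lower bound is immediate: the inclusion $\HO{\Omega}{K}\subseteq H^1(\Omega)$ restricts the class of admissible subspaces in the min-max characterization, so $\lambda_n(\Omega;K)\geq \lambda_n(\Omega;\emptyset)=\lambda_n$.

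For the upper bound, I would build a suitable $n$-dimensional subspace of $\HO{\Omega}{K}$ by cutting the first $n$ Neumann eigenfunctions against the capacitary potential. Let $V_K\in H^1(\Omega)$ be the (unique) minimizer in Definition~\ref{def:capacity}, so that $V_K-1\in\HO{\Omega}{K}$ and $\norm{V_K}_{H^1(\Omega)}^2=\Capa_{\bar\Omega}(K)$. I would then set
\[
\psi_i:=(1-V_K)\phi_i, \qquad i=1,\dots,n.
\]
Using the $L^\infty(\Omega)$-bound on Neumann eigenfunctions (classical regularity for $-\Delta+c$ on a bounded Lipschitz domain) together with a density argument approximating $1-V_K$ by functions in $C_c^\infty(\overline\Omega\setminus K)$ and $\phi_i$ by functions in $C^\infty(\overline\Omega)$, one verifies that $\psi_i\in\HO{\Omega}{K}$.

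The heart of the proof is then a direct Leibniz expansion of the Gram matrices:
\[
q(\psi_i,\psi_j)=\lambda_i\,\delta_{ij}+R_{ij}, \qquad (\psi_i,\psi_j)_{L^2(\Omega)}=\delta_{ij}+S_{ij}.
\]
Each remainder is a sum of integrals containing at least one factor of $V_K$ or $\nabla V_K$; Cauchy--Schwarz combined with the uniform $L^\infty$ and $H^1$ bounds on the $\phi_i$ (which depend only on $n$, $\Omega$, $c$) yields
\[
|R_{ij}|+|S_{ij}|\leq C\norm{V_K}_{H^1(\Omega)}=C\sqrt{\Capa_{\bar\Omega}(K)}.
\]
For $\Capa_{\bar\Omega}(K)<\delta$ with $\delta=\delta(n,\Omega,c)>0$ small enough, the matrix $(\delta_{ij}+S_{ij})_{ij}$ is positive definite, so $\psi_1,\dots,\psi_n$ are linearly independent. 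Applying min-max to the $n$-dimensional subspace they span, and bounding the largest generalized eigenvalue of the perturbed pencil $\bigl(\lambda_i\delta_{ij}+R_{ij},\delta_{ij}+S_{ij}\bigr)$ by standard matrix perturbation (e.g.\ $(I+S)^{-1}=I-S+O(\norm{S}^2)$), one obtains
\[
\lambda_n(\Omega;K)\leq \lambda_n+C\sqrt{\Capa_{\bar\Omega}(K)},
\]
as desired.

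The main technical obstacle is the first step, namely showing $\psi_i\in\HO{\Omega}{K}$: this space is not in general closed under multiplication by bounded $H^1$ functions on a Lipschitz domain, so one cannot simply invoke $(1-V_K)\in\HO{\Omega}{K}$ and $\phi_i\in H^1(\Omega)\cap L^\infty(\Omega)$. A careful two-step mollification argument is needed, using both the $L^\infty$-bound on $\phi_i$ and the density of $C^\infty(\overline\Omega)$ in $H^1(\Omega)$ for Lipschitz $\Omega$.
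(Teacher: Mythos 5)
Your proof is correct and follows essentially the same strategy as the paper: lower bound from min-max, upper bound by testing with the first $n$ Neumann eigenfunctions multiplied by a cut-off built from a capacitary competitor, then estimating the Gram matrices of the form $q$ and the $L^2$-inner product via Cauchy--Schwarz and the $L^\infty$-bound on $\phi_i$. The one genuine (and useful) difference lies in the choice of cut-off. You use the exact capacitary potential $V_K$, which forces you to justify that $(1-V_K)\phi_i\in\HO{\Omega}{K}$ by a two-step approximation argument (the obstacle you yourself flag at the end). The paper sidesteps this entirely: since $\Capa_{\bar\Omega}(K)$ is an infimum over smooth competitors, one can pick $v\in C^\infty(\overline\Omega)$ with $v-1\in C_c^\infty(\overline\Omega\setminus K)$ and $\|v\|_{H^1}^2\le 2\Capa_{\bar\Omega}(K)$, then truncate to $w:=(1-(1-v)^+)^+$. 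This $w$ is Lipschitz, satisfies $0\le w\le1$, $\|w\|_{H^1}^2\le 2\Capa_{\bar\Omega}(K)$, and is identically $1$ on a neighbourhood of $K$; hence $1-w$ vanishes on a full neighbourhood of $K$, and $\Phi_i=\phi_i(1-w)$ is an $H^1$ function supported away from $K$, which puts it in $\HO{\Omega}{K}$ by a routine mollification without any product-of-$H^1$-functions subtlety. You pay a factor $2$ in the capacity bound, which is harmless. So your argument is salvageable, but the paper's choice of $w$ instead of $V_K$ is strictly simpler and is worth adopting; if you insist on $V_K$, you should spell out the two-step density argument (approximate $\phi_i$ by truncated smooth functions with uniform $L^\infty$ bound, approximate $1-V_K$ by $C_c^\infty(\overline\Omega\setminus K)$, multiply, and pass to the limit first in $m$ then in $k$ using dominated convergence), since the claim that $\HO{\Omega}{K}$ is stable under multiplication by bounded $H^1$ functions is not automatic.
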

We observe that the left inequality is an easy consequence of the Min-Max variational characterization of the eigenvalues, namely 
\begin{equation}\label{eq:minmax}
\lambda_n(\Omega;K)=\min\left\{ \max_{u\in V_n}\frac{q(u)}{\norm{u}_{L^2(\Omega)}^2}\colon V_n\subseteq\HO{\Omega}{K} \text{ $n$-dimensional subspace} \right\}.
\end{equation}
In order to state the first  main result of this paper, we introduce the following notion of convergence of sets.
\begin{definition}\label{def:concentr}
	Let $K\subseteq \overline{\Omega}$ be compact and let $\{K_\epsilon\}_{\epsilon>0}$ be a family of compact subsets of $\overline{\Omega}$. We say that $K_\epsilon$ is \emph{concentrating} at $K$, as $\epsilon\to 0$, if for any open set $U\subseteq \R^N$ such that $K\subseteq U$ there exists $\epsilon_U>0$ such that $K_\epsilon\subseteq U$ for all $\epsilon\in(0,\epsilon_U)$.
\end{definition}
We observe that the ``limit'' set of a concentrating family is not unique. Indeed, if $K_\epsilon$ is concentrating at $K$, then it is also concentrating at any compact set $\tilde{K}$ such that $K\subseteq \tilde{K}\subseteq \overline{\Omega}$. Nevertheless, in the cases considered in the present paper (e.g.  when the limit set has zero capacity) this notion of convergence is the one that ensures the continuity of the capacity (see Proposition \ref{prop:cont_cap}); furthermore, it is related to the
convergence of sets in the sense of Mosco, see \cite{AFHL}.
An example of family of concentrating sets is given by a decreasing family of compact sets, see Example \ref{ex:concentr_sets}.

In order to sharply describe the eigenvalue variation, the following definition of capacity associated to a $H^1$-function plays a fundamental role.
\begin{definition}\label{def:f-capacity}
	For any $f\in H^1(\Omega)$ and $K\subseteq\overline{\Omega}$ compact, we define the \emph{relative Sobolev $f$-capacity} of $K$ in $\overline{\Omega}$ as follows
	\begin{equation}\label{eq:def_f_capacity}
	\Capa_{\bar{\Omega},c}(K,f):=\inf\left\{ q(u)\colon u\in H^1(\Omega),~u-f\in \HO{\Omega}{K} \right\}.
	\end{equation}
\end{definition}
We remark that, if $K\sub\partial\Omega$, the above definition actually only depends on the trace of $f$ on $\partial\Omega$, which belongs to $H^{1/2}(\partial\Omega)$, and in particular on its values on $K$, if $K$ is regular. Moreover one can prove that, if a family of compact sets $K_\epsilon\sub\overline{\Omega}$ is concentrating to a compact $K\sub\overline{\Omega}$ such that $\Capa_{\bar{\Omega}}(K)=0$, then $\Capa_{\bar{\Omega},c}(K_\epsilon,f)\to 0$ as $\epsilon\to 0$ for all $f\in H^1(\Omega)$, see Proposition \ref{prop:cont_cap} and Remark \ref{rmk:zero_cap_equiv}. 

Hereafter we assume $n_0\in\N_*$ to be such that 
\begin{equation}\label{eq:lambda_0}
\lambda_0:=\lambda_{n_0} \text{ is simple}
\end{equation}
and we denote as 
\begin{equation}\label{eq:phi0}
\phi_0:=\phi_{n_0},
\end{equation}
a corresponding $L^2(\Omega)$-normalized eigenfunction. Our first main result is 
the following sharp asymptotic expansion of the eigenvalue variation.
\begin{theorem}\label{thm:sharp_asymp}
	Let $\{K_\epsilon\}_{\epsilon>0}$ be a family of compact subsets of $\overline{\Omega}$ concentrating to $K\subseteq\overline{\Omega}$ compact such that $\Capa_{\bar{\Omega}}(K)=0$. Let $\lambda_0$, $\phi_0$ be as in \eqref{eq:lambda_0}, \eqref{eq:phi0} respectively and let $\lambda_\epsilon:=\lambda_{n_0}(\Omega;K_\epsilon)$. Then
	\[
	\lambda_\epsilon-\lambda_0= \Capa_{\bar{\Omega},c}(K_\epsilon,\phi_0)+o(\Capa_{\bar{\Omega},c}(K_\epsilon,\phi_0))
	\]
	as $\epsilon\to 0$.
\end{theorem}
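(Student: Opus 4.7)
The plan is to establish matching upper and lower bounds for $\lambda_\epsilon - \lambda_0$, both pivoting on the capacitary potential $V_\epsilon$ defined as the minimizer in \eqref{eq:def_f_capacity} with $f = \phi_0$. Its Euler--Lagrange condition reads $q(V_\epsilon, w) = 0$ for every $w \in \HO{\Omega}{K_\epsilon}$, and combining this (with $w = V_\epsilon - \phi_0$) with the Neumann equation for $\phi_0$ tested against $V_\epsilon \in H^1(\Omega)$ yields
\[
\Capa_{\bar{\Omega},c}(K_\epsilon, \phi_0) = q(V_\epsilon) = \lambda_0 (\phi_0, V_\epsilon)_{L^2}.
\]
A first crucial refinement is $\|V_\epsilon\|_{L^2}^2 = o(\Capa_{\bar{\Omega},c}(K_\epsilon, \phi_0))$: the rescaled potential $\tilde V_\epsilon := V_\epsilon/\sqrt{\Capa_{\bar{\Omega},c}(K_\epsilon, \phi_0)}$ is bounded in $H^1(\Omega)$, and the Euler--Lagrange property gives $q(\tilde V_\epsilon, f) = 0$ for every $f \in C_c^\infty(\overline{\Omega} \setminus K)$ -- such $f$ eventually belongs to $\HO{\Omega}{K_\epsilon}$ by Definition \ref{def:concentr} -- while Proposition \ref{prop:Kequiv} combined with $\Capa_{\bar{\Omega}}(K) = 0$ makes these functions dense in $H^1(\Omega)$; compactness of the embedding $H^1 \hookrightarrow L^2$ then upgrades the resulting weak $H^1$-convergence to strong $L^2$-convergence to zero.

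For the upper bound, I would apply the variational characterization $\lambda_\epsilon = \inf\{q(u)/\|u\|_{L^2}^2 : u \in \HO{\Omega}{K_\epsilon},\ u \perp_{L^2} \phi_k^\epsilon \text{ for } 1 \leq k < n_0\}$ (with $\phi_k^\epsilon$ denoting the lower perturbed eigenfunctions) to the trial function $u_\epsilon := (\phi_0 - V_\epsilon) - \sum_{k<n_0} a_k \phi_k^\epsilon$, with coefficients $a_k := (\phi_0 - V_\epsilon, \phi_k^\epsilon)_{L^2}$ enforcing the orthogonality constraints. Using the perturbed equation for each $\phi_k^\epsilon$ and the Euler--Lagrange property of $V_\epsilon$ a direct computation gives
\[
q(u_\epsilon) - \lambda_0 \|u_\epsilon\|_{L^2}^2 = \Capa_{\bar{\Omega},c}(K_\epsilon, \phi_0) - \lambda_0 \|V_\epsilon\|_{L^2}^2 + \sum_{k<n_0} a_k^2 (\lambda_0 - \lambda_k^\epsilon);
\]
testing the equation for $\phi_k^\epsilon$ against $\phi_0 - V_\epsilon$ further yields $(\phi_0, \phi_k^\epsilon)_{L^2} = \lambda_k^\epsilon (V_\epsilon, \phi_k^\epsilon)_{L^2}/(\lambda_k^\epsilon - \lambda_0)$, and the spectral gap from simplicity of $\lambda_0$ delivers $|a_k| = O(\|V_\epsilon\|_{L^2}) = o(\sqrt{\Capa_{\bar{\Omega},c}(K_\epsilon, \phi_0)})$. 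Combined with the preliminary estimate this gives $\lambda_\epsilon - \lambda_0 \leq \Capa_{\bar{\Omega},c}(K_\epsilon, \phi_0)(1 + o(1))$.

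For the lower bound, let $\phi_\epsilon$ be an $L^2$-normalized eigenfunction for $\lambda_\epsilon$. Theorem \ref{thm:cont_eigen}, the simplicity of $\lambda_0$ and a standard compactness argument yield $\phi_\epsilon \to \phi_0$ in $H^1(\Omega)$ up to sign. Testing \eqref{eq:weak_mixed} for $\phi_\epsilon$ against $V_\epsilon - \phi_0 \in \HO{\Omega}{K_\epsilon}$, and using both $q(V_\epsilon, \phi_\epsilon) = 0$ (since $\phi_\epsilon \in \HO{\Omega}{K_\epsilon}$) and the Neumann equation for $\phi_0$, I derive the key identity
\[
(\lambda_\epsilon - \lambda_0)(\phi_\epsilon, \phi_0)_{L^2} = \lambda_\epsilon (\phi_\epsilon, V_\epsilon)_{L^2}.
\]
Splitting $(\phi_\epsilon, V_\epsilon)_{L^2} = \Capa_{\bar{\Omega},c}(K_\epsilon, \phi_0)/\lambda_0 + (\phi_\epsilon - \phi_0, V_\epsilon)_{L^2}$ then reduces the theorem to the sharp remainder estimate $(\phi_\epsilon - \phi_0, V_\epsilon)_{L^2} = o(\Capa_{\bar{\Omega},c}(K_\epsilon, \phi_0))$.

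The hardest step is precisely this remainder estimate, which the plan would attack via a Cauchy--Schwarz bound combining the already-established $\|V_\epsilon\|_{L^2} = o(\sqrt{\Capa_{\bar{\Omega},c}(K_\epsilon, \phi_0)})$ with a matching $\|\phi_\epsilon - \phi_0\|_{L^2}^2 = O(\Capa_{\bar{\Omega},c}(K_\epsilon, \phi_0))$. For the latter, expand $\phi_\epsilon = \sum_{n \geq 1} c_n \phi_n$ in the Neumann basis: iterating the cross-identity argument with $V_n^\epsilon$ in place of $V_\epsilon$ yields $c_n = \lambda_\epsilon (\phi_\epsilon, V_n^\epsilon)_{L^2}/(\lambda_\epsilon - \lambda_n)$ for $n \neq n_0$, controlled by $O(\|V_n^\epsilon\|_{L^2})$ through the spectral gap, while the Parseval identity $\sum_{n \neq n_0} c_n^2 (\lambda_n - \lambda_0) = \lambda_\epsilon - \lambda_0$ combined with the upper bound $\lambda_\epsilon - \lambda_0 = O(\Capa_{\bar{\Omega},c}(K_\epsilon, \phi_0))$ from the previous step controls $\sum_{n \neq n_0} c_n^2 = O(\Capa_{\bar{\Omega},c}(K_\epsilon, \phi_0))$, equivalent to the desired $L^2$-rate. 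Cauchy--Schwarz then finishes: $|(\phi_\epsilon - \phi_0, V_\epsilon)_{L^2}| \leq \|\phi_\epsilon - \phi_0\|_{L^2} \|V_\epsilon\|_{L^2} = O(\sqrt{\Capa_{\bar{\Omega},c}(K_\epsilon, \phi_0)}) \cdot o(\sqrt{\Capa_{\bar{\Omega},c}(K_\epsilon, \phi_0)}) = o(\Capa_{\bar{\Omega},c}(K_\epsilon, \phi_0))$.
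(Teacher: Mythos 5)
Your overall strategy — establishing matching upper and lower bounds around the potential $V_\epsilon$, using the Rayleigh quotient with a trial function for one side and an exact identity from testing the perturbed eigenvalue equation for the other — is a genuinely different and sensible route from the paper's. The paper instead works with the compact operator $A_{K_\epsilon}$ of Lemma \ref{lemma:A_K} and the spectral-theorem distance estimate \eqref{eq:A_K_spectral_theorem}, first showing $|\lambda_\epsilon - \lambda_0|=o(C_\epsilon^{1/2})$ and then $q(\psi_\epsilon-\Pi_\epsilon\psi_\epsilon)=o(C_\epsilon)$ for $\psi_\epsilon = \phi_0-V_\epsilon$, before concluding from the same testing identity you found. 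Your preliminary facts ($C_\epsilon = \lambda_0(\phi_0,V_\epsilon)_{L^2}$, Lemma \ref{lemma:L^2_norm}, the key identity $(\lambda_\epsilon-\lambda_0)(\phi_\epsilon,\phi_0)_{L^2} = \lambda_\epsilon(\phi_\epsilon,V_\epsilon)_{L^2}$) and the upper bound step are all correct.

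The gap is in the final remainder estimate, where you claim $\sum_{n\neq n_0}c_n^2 = O(C_\epsilon)$. Your argument has two problems. First, the Parseval identity $\sum_{n\neq n_0}c_n^2(\lambda_n-\lambda_0) = \lambda_\epsilon-\lambda_0$ has terms of mixed sign (negative for $n<n_0$), so the upper bound $\lambda_\epsilon-\lambda_0 = O(C_\epsilon)$ controls $\sum_{n>n_0}c_n^2$ only after the part $\sum_{n<n_0}c_n^2$ is already controlled. Second, for $n<n_0$ your bound $|c_n|=O(\|V_n^\epsilon\|_{L^2})$ does not deliver $c_n^2=O(C_\epsilon)$: by Lemma \ref{lemma:L^2_norm} one only has $\|V_n^\epsilon\|_{L^2}^2 = o\!\left(\Capa_{\bar\Omega,c}(K_\epsilon,\phi_n)\right)$, and $\Capa_{\bar\Omega,c}(K_\epsilon,\phi_n)$ can be of a strictly larger order of magnitude than $C_\epsilon = \Capa_{\bar\Omega,c}(K_\epsilon,\phi_0)$. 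This happens precisely in the blow-up regime of Theorems \ref{thm:blow_up} and \ref{thm:blow_up_3}: if $\phi_n$ with $n<n_0$ vanishes at the concentration point to a strictly lower order $\gamma_n < \gamma$ than $\phi_0$, then $\Capa_{\bar\Omega,c}(K_\epsilon,\phi_n)\sim\epsilon^{N+2\gamma_n-2}\gg\epsilon^{N+2\gamma-2}\sim C_\epsilon$ (this is the generic situation for, e.g., $n=1$ and a simple $\phi_1$ bounded away from zero). So the Cauchy--Schwarz bound on $(\phi_\epsilon,V_n^\epsilon)_{L^2}$ by $\|V_n^\epsilon\|_{L^2}$ is too lossy, and there is also a hidden circularity, since $(\phi_\epsilon,V_n^\epsilon)_{L^2}$ contains a contribution from $\phi_\epsilon-\phi_0$, the very quantity you are trying to bound. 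The gap is likely fillable — e.g.\ by observing $q(V_n^\epsilon,\phi_0)=q(V_n^\epsilon,V_\epsilon)$ (since $\phi_0-V_\epsilon\in\HO{\Omega}{K_\epsilon}$), which with Cauchy--Schwarz gives $|(\phi_0,V_n^\epsilon)_{L^2}|\le\lambda_0^{-1}\sqrt{\Capa_{\bar\Omega,c}(K_\epsilon,\phi_n)\,C_\epsilon}$ and then permits a bootstrap using $\Capa_{\bar\Omega,c}(K_\epsilon,\phi_n)=o(1)$ — but as written your argument does not close. The paper's use of the operator $A_{K_\epsilon}$ and the distance-to-spectrum inequality sidesteps the unperturbed basis entirely and avoids introducing the extraneous potentials $V_n^\epsilon$.
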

In order to give
some relevant examples of explicit expansions, 
 in the last part of the present work we provide the sharp asymptotic behavior of the function $\epsilon\mapsto \Capa_{\bar{\Omega},c}(K_\epsilon,\phi_0)$ appearing above,  in a particular case.
 More precisely, we consider a family $\{K_\epsilon\}_{\epsilon>0}\sub \overline{\Omega}$ which is concentrating at a point $\bar{x}\in\overline{\Omega}$ in an appropriate way, that resembles the situation where a fixed set is being scaled and it is therefore maintaining the same shape while shrinking to the point. 
Hereafter we illustrate these results by distinguishing the cases $\bar{x}\in\partial\Omega$ and $\bar{x}\in\Omega$.
Without losing generality we can assume that $\bar{x}=0$. 
We perform  this analysis under the assumption $N\geq 3$, since
a detailed study of the case $N=2$, with $K_\epsilon,K\sub\partial\Omega$, has been already pursued in \cite{Gadylshin1993}; nevertheless, our method, which is based on  a
blow-up analysis for the capacitary potentials, could be adapted to the $2$-dimensional case by using a logarithmic Hardy inequality to derive energy estimates, instead of the Hardy-type inequality of Lemma \ref{lemma:hardy}, which does not hold in dimension~2.

\subsection{Sets scaling to a boundary point}\label{sec:sets-scal-bound}

We first focus on the case in which the perturbing compact sets $K_\epsilon\sub \overline{\Omega}$ are concentrating to a point of the boundary of $\Omega$, which, up to a translation, can be assumed to be the origin. In this situation, we assume that the boundary $\partial \Omega$ is of class $C^{1,1}$ in a neighbourhood of $0\in\partial\Omega$, namely 
\begin{equation}\label{eq:hp_boundary_1}
\begin{aligned}
\text{there exists }&r_0>0~\text{and}~g\in C^{1,1}(B_{r_0}')~ \text{such that }\\
B_{r_0}\cap \Omega&=\{x\in B_{r_0}\colon x_N>g (x') \}, \\
B_{r_0}\cap \partial\Omega&=\{x\in B_{r_0}\colon x_N=g (x') \},
\end{aligned}
\end{equation}
where 
$B_{r_0}=\{x=(x_1,x_2,\dots,x_N)\in\R^N:|x|<r_0\}$ is the ball in $\R^N$ centered at the origin  with radius $r_0$, 
$x'=(x_1,\dots,x_{N-1})$ and $B_{r_0}'=\{(x',x_N)\in B_{r_0}:x_N=0\}$. It is not restrictive to assume that $\nabla g (0)=0$, i.e. that $\partial\Omega$ is tangent to the coordinate hyperplane $\{x_N=0\}$ in the origin. 
Let us introduce the following class
of diffeomorphisms that ``straighten'' the boundary near $0$:
\begin{align}\label{eq:diffeo_class}
	\mathcal{C}:=\{ \Phi:\mathcal U\to B_R\colon &
\mathcal U\text{ is an open neighbourhood of $0$, }R>0,\\
\notag&\Phi\text{ is a diffeomorphism  of class $C^{1,1}(\mathcal U;B_{R})$, }\Phi(0)=0,
\\
\notag&	J_\Phi(0)=I_N, \ \Phi(\mathcal U\cap \Omega)= \R^N_+\cap B_R\text{ and }
\Phi(\mathcal U\cap \partial \Omega) =B_R'\},
\end{align}
where  $\R^N_+:=\{(x_1,\dots,x_N)\in \R^N\colon  x_N>0\}$ and $I_N$ is the identity $N\times N$ matrix.
Let us assume that, for any $\epsilon\in (0,1)$, $K_\epsilon\sub \overline{\Omega}$ is a compact set and the family $\{K_\epsilon\}_{\epsilon}$ satisfies the following properties:
\begin{gather}
\text{there exists }M\sub\overline{\R^N_+}~\text{compact such that }  \Phi(K_\epsilon)/\epsilon\sub M\quad\text{for all }\epsilon\in (0,1), \label{eq:hp_blow_up_1_intr} \\
\begin{gathered}
\text{there exists }K\sub\overline{\R_+^N}~\text{compact such that }\\
\R^N\setminus (\Phi(K_\epsilon)/\epsilon)\to \R^N\setminus K\quad\text{in the sense of Mosco, as }\epsilon\to 0,
\end{gathered}
\label{eq:hp_blow_up_2_intr}
\end{gather}
for some $\Phi\in\mathcal{C}$, where $\Phi(K_\epsilon)/\epsilon:=\{x/\epsilon\colon x\in \Phi(K_\epsilon)  \}$. With reference to \cite{Daners2003,Mosco1969}, we recall below the definition of convergence of sets in the sense of Mosco.
\begin{definition}\label{def:mosco_D}
	Let $\epsilon\in(0,1)$ and let $U_\epsilon,U\sub\R^N$ be open sets. We say that $U_\epsilon$ \emph{is converging to $U$ in the sense of Mosco} as $\epsilon\to 0$ if the following  two properties hold:
	\begin{enumerate}[(i)]
		\item the weak limit points (as $\epsilon\to 0$) in $H^1(\R^N)$ of every family of functions $\{u_\epsilon\}_{\epsilon}\subseteq H^1(\R^N)$, such that $u_\epsilon\in H^1_0(U_\epsilon)$ for every $\epsilon>0$, belong to $H^1_0(U)$;
		\item for every $u\in H^1_0(U)$ there exists a family $\{u_\epsilon\}_\epsilon\subseteq H^1(\R^N)$ such that $u_\epsilon\in H^1_0(U_\epsilon)$ for every $\epsilon>0$ and $u_\epsilon\to u$ in $H^1(\R^N)$, as $\epsilon\to 0$.
                \end{enumerate}
	We may also say that $H^1_0(U_\epsilon)$ is converging to $H^1_0(U)$ in the sense of Mosco.
\end{definition}

	In order to clarify hypotheses \eqref{eq:hp_blow_up_1_intr} and \eqref{eq:hp_blow_up_2_intr} we adduce below a bunch of examples in which they hold for subsets $K_\epsilon$ of $\partial\Omega$. 

\begin{examples}\label{ex:K_epsilon}\quad

\begin{enumerate}[(i)]
\item 
The easiest case is when $\partial\Omega$ is flat in a neighbourhood of the origin and 
	\begin{equation*}
	K_\epsilon:=\epsilon K=\{\epsilon x\colon x\in K  \},
	\end{equation*}
	for a certain fixed $K\sub\R^{N-1}$ compact. Here we can choose as $\Phi$ the identity so that $\Phi(K_\epsilon)/\epsilon\equiv K$, which clearly satisfies both  hypotheses  \eqref{eq:hp_blow_up_1_intr} and \eqref{eq:hp_blow_up_2_intr}.
\item 
 Another interesting example (always in the case of flat boundary) is when $\Phi$ is the identity and $K_\epsilon/\epsilon$ is a perturbation of a compact set. More precisely, let $K_1,K_2\sub\R^{N-1}$ be two compact sets containing the origin  and let $f\colon (0,1)\to (0,+\infty)$ be such that $f(s)/s\to 0$ as $s\to 0$. If we consider
	\begin{equation*}
		K_\epsilon:=\epsilon K_1+f(\epsilon) K_2=\{\epsilon x+f(\epsilon) y\colon x\in K_1,~y\in K_2 \}
	\end{equation*}
	then $\Phi(K_\epsilon)/\epsilon$ fulfills \eqref{eq:hp_blow_up_1_intr} and \eqref{eq:hp_blow_up_2_intr} with $K=K_1$.
 We remark that it is possible to generalize this idea and produce other examples.
\item 
 In the case of non-flat boundary, we have that conditions \eqref{eq:hp_blow_up_1_intr} and \eqref{eq:hp_blow_up_2_intr} hold
e.g. when $K_\epsilon$ is the image through $\Phi^{-1}$, for some $\Phi\in\mathcal{C}$, of sets like the ones in (i)--(ii). A remarkable case is when $\Phi(x',x_N)=(x',x_N-g(x'))$ in a neighbourhood of the origin, so that the restriction of $\Phi$ to $\partial\Omega$  is the orthogonal projection of $\partial\Omega$ onto its tangent hyperplane at $0$.
Hence assumption \eqref{eq:hp_blow_up_2_intr}  is satisfied, for example, if the each set  $K_\epsilon$ is a compact subset of $\partial \Omega$ whose orthogonal projection on the hyperplane tangent to $\partial\Omega$ at $0$ is of the form $\epsilon K$, for $K$ being a compact subset of $\R^{N-1}$.
\item 
Finally, it is easy to prove that assumptions \eqref{eq:hp_blow_up_1_intr} and \eqref{eq:hp_blow_up_2_intr} hold for
\[
K_\epsilon:=B_\epsilon\cap \partial \Omega, \quad\text{with } K=\R^{N-1}\cap\overline{B_1}.
\]
\end{enumerate}
\end{examples}

The last ingredient needed to detect the sharp asymptotics of the Sobolev capacity of shrinking  sets is the notion of vanishing order for the limit eigenfunction. Let 
\[
L^2(\mathbb{S}^{N-1}_+):=\left\{\psi\colon \mathbb{S}^{N-1}_+\to \R:\text{ $\psi$ is measurable and 
}\int_{\mathbb{S}^{N-1}_+}\abs{\psi}^2\ds<\infty  \right\},
\]
where $\mathbb{S}^{N-1}_+:=\{(x_1,\dots,x_N)\in \R^N \colon \abs{x}=1,~x_N>0 \}$. Moreover let
\[
H^1(\mathbb{S}^{N-1}_+):=\{\psi\in L^2(\mathbb{S}^{N-1}_+)\colon \nabla_{\mathbb{S}^{N-1}}\psi\in L^2(\mathbb{S}^{N-1}_+)  \}.
\]
The following proposition asserts that the limit eigenfunction $\phi_0$ behaves like a harmonic polynomial near the origin.
\begin{proposition}\label{prop:vanish_phi_0}
Let $\Omega$ satisfy assumption \eqref{eq:hp_boundary_1} and let $\phi_0$ be as in \eqref{eq:phi0}. 
	Then there exists $\gamma\in\N$ (possibly $0$) and $\Psi\in C^\infty(\overline{\mathbb{S}^{N-1}_+})$, $\Psi\neq 0$ such that, for all $\Phi\in\mathcal{C}$, there holds
	\begin{equation}\label{eq:phi_0_psi_gamma}
		\frac{\phi_0(\Phi^{-1}(\epsilon x))}{\epsilon^\gamma}\to \abs{x}^{\gamma}\Psi\left( \frac{x}{\abs{x}} \right)\quad\text{in }H^1(B_R^+)\text{ as }\epsilon\to 0,
	\end{equation}
	for all $R>0$, where $B_R^+:=B_R\cap \R^N_+$.

Furthermore, for every $R>0$,
\begin{equation}\label{eq:1}
  \epsilon^{-N-2\gamma}\int_{\Omega\cap B_{R\epsilon}}\varphi_0^2(x)\dx\to\int_{B_R^+}\psi^2_\gamma(x)\dx
\quad\text{as }\epsilon\to 0
\end{equation}
and 
\begin{equation}\label{eq:2}
  \epsilon^{-N-2\gamma+2}\int_{\Omega\cap B_{R\epsilon}}|\nabla \varphi_0(x)|^2\dx\to\int_{B_R^+}|\nabla\psi_\gamma(x)|^2\dx
\quad\text{as }\epsilon\to 0
\end{equation}
where \begin{equation}\label{eq:psi_gamma}
	\psi_\gamma(x):=\abs{x}^{\gamma}\Psi\left( \frac{x}{\abs{x}} \right).
\end{equation}
\end{proposition}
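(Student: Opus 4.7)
First I would fix $\Phi\in\mathcal C$ and pull the eigenfunction back to the half-ball by setting $v(y):=\phi_0(\Phi^{-1}(y))$ for $y$ in a neighbourhood of the origin in $\overline{\R^N_+}$. Because $\Phi\in C^{1,1}$ and $J_\Phi(0)=I_N$, the function $v$ solves a divergence-form problem
\begin{equation*}
-\dive(A(y)\nabla v)+\tilde c(y)v=\lambda_0 \,|J_{\Phi^{-1}}(y)|\,v\quad\text{in }B_R^+,\qquad A\nabla v\cdot\nnu=0\quad\text{on }B_R',
\end{equation*}
where $A\in C^{0,1}(B_R^+;\R^{N\times N})$ is symmetric, uniformly elliptic, $A(0)=I_N$ with $A(y)-I_N=O(|y|)$, and $\tilde c\in L^\infty$. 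All subsequent analysis takes place in these straightened coordinates.

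\textbf{Frequency identification.} The plan is to apply an Almgren-type monotonicity argument to $v$ at the origin. Define the height and energy
\begin{equation*}
H(r):=\int_{\partial B_r\cap\R^N_+}v^2\,dS,\qquad D(r):=\int_{B_r^+}\bigl(A\nabla v\cdot\nabla v+(\tilde c-\lambda_0|J_{\Phi^{-1}}|)v^2\bigr)\dx,
\end{equation*}
and the perturbed frequency $\mathcal N(r):=rD(r)/H(r)$. A standard computation based on the Pohozaev identity for $v$, in which the Neumann boundary term on $B_r'$ drops out, combined with the fact that $A(0)=I_N$, yields that $r\mapsto e^{Cr}\mathcal N(r)$ is nondecreasing modulo an $L^1$ remainder; hence $\gamma:=\lim_{r\to 0^+}\mathcal N(r)$ exists in $[0,+\infty)$. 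The Hardy inequality of Lemma \ref{lemma:hardy} together with a boundary unique-continuation argument (invoking that $\phi_0\not\equiv 0$) rules out infinite-order vanishing, so $\gamma<\infty$. By integrating the monotonicity one obtains the doubling estimate $H(r)\simeq r^{N-1+2\gamma}$ as $r\to 0^+$.

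\textbf{Blow-up to a homogeneous harmonic profile.} Set $w_\epsilon(x):=v(\epsilon x)/\epsilon^\gamma$. The doubling estimate combined with Caccioppoli-type bounds implies that $\{w_\epsilon\}$ is bounded in $H^1(B_R^+)$ for each $R>0$; by Rellich, along a subsequence $w_\epsilon\rightharpoonup w$ weakly in $H^1$ and strongly in $L^2$. The rescaled equation
\begin{equation*}
-\dive\bigl(A(\epsilon\cdot)\nabla w_\epsilon\bigr)+\epsilon^2\bigl(\tilde c(\epsilon\cdot)-\lambda_0|J_{\Phi^{-1}}(\epsilon\cdot)|\bigr)w_\epsilon=0
\end{equation*}
passes to the limit because $A(\epsilon\cdot)\to I_N$ in $L^\infty$ and the zero-order term is $O(\epsilon^2)$, showing that $w$ is harmonic in $\R^N_+$ with $\partial_{x_N}w=0$ on $\{x_N=0\}$. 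The scaling invariance of $\mathcal N$ passes to the limit and forces $w$ to be homogeneous of degree $\gamma$. Extending $w$ by even reflection gives a homogeneous harmonic function on $\R^N$ lying in $H^1_{\mathrm{loc}}$, which forces $\gamma\in\N$ and $w(x)=|x|^\gamma\Psi(x/|x|)$ with $\Psi$ a nontrivial smooth eigenfunction of $-\Delta_{\mathbb S^{N-1}}$ on $\mathbb S^{N-1}_+$ with Neumann datum on the equator and eigenvalue $\gamma(\gamma+N-2)$. Uniqueness of the limit (so that the full family converges) is obtained by strengthening the monotonicity to show that $H(r)/r^{N-1+2\gamma}$ has a positive limit, which pins down $\Psi$.

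\textbf{Strong convergence, conclusion and independence of $\Phi$.} Weak convergence is upgraded to strong $H^1(B_R^+)$ convergence by testing the rescaled equation against $w_\epsilon$ and passing to the limit, which yields $\int_{B_R^+}|\nabla w_\epsilon|^2\to\int_{B_R^+}|\nabla w|^2$. Independence from the choice of $\Phi$ follows from $J_\Phi(0)=I_N$: for any $\Phi_1,\Phi_2\in\mathcal C$, the diffeomorphism $\Phi_2\circ\Phi_1^{-1}$ has identity linearization at $0$, so the two blow-ups produce the same limit $\Psi$. Finally, to deduce \eqref{eq:1}--\eqref{eq:2}, the change of variables $x=\Phi^{-1}(\epsilon z)$ yields
\begin{equation*}
\epsilon^{-N-2\gamma}\int_{\Omega\cap B_{R\epsilon}}\phi_0^2\dx=\int_{\Phi(\Omega\cap B_{R\epsilon})/\epsilon}w_\epsilon(z)^2\,|J_{\Phi^{-1}}(\epsilon z)|\dz,
\end{equation*}
and since $\Phi(\Omega\cap B_{R\epsilon})/\epsilon\to B_R^+$ together with $|J_{\Phi^{-1}}(\epsilon\cdot)|\to 1$ uniformly, the $L^2$ convergence of $w_\epsilon$ gives \eqref{eq:1}; the argument for \eqref{eq:2} is identical using strong $H^1$ convergence. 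The main obstacle I expect is the rigorous perturbed monotonicity formula and the accompanying boundary unique-continuation at $0$ for the variable-coefficient operator with conormal Neumann condition; once these are in place, the blow-up identification and change-of-variables steps are comparatively routine.
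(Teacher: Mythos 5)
Your proposal is correct in outline but follows a genuinely different route from the paper's. The paper's proof does not run an Almgren monotonicity argument in the half-ball at all. Instead, it singles out the Adolfsson--Escauriaza diffeomorphism $\Phi_{\textup{AE}}\in\mathcal{C}$, chosen precisely because the resulting coefficient matrix satisfies $a_{i,N}(y',0)=0$ for $i<N$ on the flat boundary; this is what makes the \emph{even reflection of $\hat{\phi}_0$ itself} (not merely of the blow-up limit) solve a divergence-form equation with Lipschitz coefficients on the full ball $B_{r_1}$. The authors then simply quote Robbiano's interior expansion theorem to get $\hat\phi_0=\psi_\gamma+R_\gamma$ with $\|R_\gamma\|_{H^1(B_r)}=O(r^{\gamma+\frac N2+1-\delta})$, after which independence of $\Phi$ follows from the elementary computation $G_\epsilon(x)=(\Phi_{\textup{AE}}\circ\Phi^{-1})(\epsilon x)/\epsilon=x+|x|^2O(\epsilon)$. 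The change-of-variables derivation of \eqref{eq:1}--\eqref{eq:2} at the end matches yours. What each approach buys: the paper's reduction-to-interior argument is short because it piggybacks on a known theorem, but it hinges crucially on the non-obvious fact that a special change of variables annihilates the off-diagonal entries $a_{i,N}$ on $\{y_N=0\}$ (a generic straightening $\Phi(x',x_N)=(x',x_N-g(x'))$ does \emph{not} have this property, so reflection would not produce Lipschitz coefficients). Your direct boundary Almgren approach avoids any special choice of $\Phi$ and is more self-contained conceptually, but it requires establishing the full monotonicity machinery at a conormal-Neumann boundary for variable Lipschitz coefficients, including the $H^2$ regularity needed to justify the Rellich--Nečas identity and the doubling/unique-continuation step that rules out $\gamma=+\infty$; as you acknowledge, that is the main technical obstacle, and it is exactly the work the paper outsources to Robbiano via the $\Phi_{\textup{AE}}$ reflection trick.

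One small caution on a point you stated a bit casually: the claim that "the Neumann boundary term on $B_r'$ drops out" is correct for the Rellich identity with the vector field $\beta=x$ (since $x\cdot\nnu=0$ and $A\nabla v\cdot\nnu=0$ on $B_r'$), but for the derivative $H'(r)$ and the identity relating $H'$ to $D$ you need to track the $A$-corrections, as $H$ is defined with the Euclidean rather than the $A$-weighted surface measure; this produces the $O(r)$ error terms that you absorb into the "$L^1$ remainder", and getting those error estimates uniform enough to obtain the doubling inequality $H(r)\simeq r^{N-1+2\gamma}$ is where the Lipschitz regularity of $A$ and $A(0)=I_N$ must be used quantitatively. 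This is a known but nontrivial computation; the paper sidesteps it entirely.
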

We observe that the function $\Psi$ appearing in \eqref{eq:phi_0_psi_gamma} and \eqref{eq:psi_gamma} is necessarily a spherical harmonic of degree $\gamma$ which
is symmetric with respect to the equator $x_N=0$, hence
satisfying homogeneous Neumann boundary conditions on $\{x_N=0\}$. More precisely, $\Psi$ solves
\[
\begin{bvp}
-\Delta_{\mathbb{S}^{N-1}} \Psi &=\gamma(N+\gamma-2)\Psi, &&\text{in }\mathbb{S}^{N-1}_+,\\
\nabla_{\mathbb{S}^{N-1}}\Psi\cdot \bm{e}_N&=0, &&\text{on }\partial\mathbb{S}^{N-1}_+,
\end{bvp}
\]
where $\bm{e}_N=(0,\dots,0,1)$. The integer number $\gamma$ is called the \emph{vanishing order} of $\phi_0$ in the origin. We also mention \cite{Bers1955,Felli2013,Nirenberg1959,Robbiano1988} for asymptotic behaviour of solutions to elliptic PDEs.
\begin{remark}\label{rmk:Psi}
	We observe that the restriction of $\Psi$ to the $N-2$ dimensional unit sphere $\partial\mathbb{S}^{N-1}_+$ cannot vanish everywhere. Indeed this would mean that the nonzero harmonic function $\psi_\gamma$ defined in \eqref{eq:psi_gamma} vanishes on $\partial\R^N_+$ together with is normal derivative; 
	but then the trivial extension of $\psi_\gamma$ to the whole $\R^N$ would violate the classical unique continuation principle (see \cite{Wolff1992}), thus giving rise to a contradiction.
\end{remark}

 For $N\geq 3$, let us introduce the Beppo Levi space $\mathcal{D}^{1,2}(\overline{\R^N_+})$ defined as the completion of $C_c^\infty(\overline{\R^N_+})$ with respect to the norm
\[
\norm{u}_{\mathcal{D}^{1,2}(\overline{\R^N_+})}:=\left(\int_{\R^N_+}\abs{\nabla u}^2\dx \right)^{1/2}.
\]
Furthermore, for any compact $K\sub \overline{\R^N_+}$, we define the space $\mathcal{D}^{1,2}(\overline{\R^N_+}\setminus K)$ as the closure of $C_c^\infty(\overline{\R^N_+}\setminus K)$ in $\mathcal{D}^{1,2}(\overline{\R^N_+})$. Thereafter we introduce a notion of capacity that will appear in the asymptotic expansion of $\Capa_{\bar{\Omega},c}(K_\epsilon,\phi_0)$, when $\epsilon\to 0$. 
\begin{definition}\label{def:rel_capacity}
	 For any compact $K\sub {\overline{\R^N_+}}$ and for any $f\in\mathcal{D}^{1,2}({\overline{\R^N_+}})$ we define the \emph{relative $f$-capacity} of $K$ in ${\overline{\R^N_+}}$ as
	\[
	\mathrm{cap}_{\overline{\R^N_+}}(K,f):=\inf\left\{ \int_{\R^N_+}\abs{\nabla u}^2\dx\colon u\in \mathcal{D}^{1,2}(\overline{\R^N_+}),~u-f\in\mathcal{D}^{1,2}(\overline{\R^N_+}\setminus K) \right\}.
	\]
	If $f\in\mathcal{D}^{1,2}(\overline{\R^N_+})$ is equal to $1$ in a neighbourhood  of $K$, we denote by 
	\[
	\mathrm{cap}_{\overline{\R^N_+}}(K):=\mathrm{cap}_{\overline{\R^N_+}}(K,f)
	\]
	the relative capacity of $K$ in $\overline{\R^N_+}$.	The definition can be extended  to functions $f\in H^1_{\textup{loc}}(\overline{\R_+^N})$ by letting
	\[
	\mathrm{cap}_{\overline{\R^N_+}}(K,f):=\inf\left\{ \int_{\R^N_+}\abs{\nabla u}^2\dx\colon u\in \mathcal{D}^{1,2}(\overline{\R^N_+}),~u-\eta_K f\in\mathcal{D}^{1,2}(\overline{\R^N_+}\setminus K) \right\},
	\]
	where $\eta_K\in C_c^\infty(\overline{\R^N_+})$ is such that $\eta_K=1$ in a neighbourhood of $K$.
\end{definition}

\begin{remark}\label{rmk:capa_std}
  We remark that the relative Sobolev capacity  in $\overline{\R_+^N}$ of a compact set  $K\sub\partial{\R_+^N}$, here denoted by $\mathrm{cap}_{\overline{\R_+^N}}(K)$, actually coincides with half of the capacity of $K$, in the classical sense (see \cite[Chapter 2.1]{Maly1997}), defined as 
\[
\capa_{\R^N}(K)=\inf\left\{ \int_{\R^N}\abs{\nabla u}^2\dx\colon u\in \mathcal{D}^{1,2}(\R^N),~u-\eta_K \in\mathcal{D}^{1,2}(\R^N\setminus K) \right\}.
\]
Moreover we notice that $\mathrm{cap}_{\overline{\R_+^N}}(K)$ coincides, up to a constant, with the Gagliardo $\frac{1}{2}$-fractional capacity of $K$ in $\R^{N-1}$, see e.g. \cite{AFN} for the definition.
\end{remark}

In this framework we are able to state the second main result of our paper, which concerns the sharp behaviour of the function $\epsilon\mapsto \Capa_{\bar{\Omega},c}(K_\epsilon,\phi_0)$ as $\epsilon\to~\!\!0^+$.

\begin{theorem}\label{thm:blow_up} Let $N\geq3$.  Assume \eqref{eq:hp_boundary_1} holds true. Let $\left\{K_\epsilon\right\}_{\epsilon>0}\sub\overline{\Omega}$ be a family of compact sets concentrating at $\{0\}\sub\partial\Omega$ as $\epsilon\to 0$ and let \eqref{eq:hp_blow_up_1_intr}-\eqref{eq:hp_blow_up_2_intr} hold for some $\Phi\in\mathcal{C}$ and for some compact set $K\sub\overline{\R^N_+}$ satisfying $\mathrm{cap}_{\overline{\R^N_+}}(K)>0$. 
Let $\phi_0$ be as in \eqref{eq:phi0} and 
let $\gamma$, $\psi_\gamma$ be as in \eqref{eq:phi_0_psi_gamma}-\eqref{eq:psi_gamma}. Then
\[ 
\Capa_{\bar{\Omega},c}(K_\epsilon,\phi_0)=
\epsilon^{N+2\gamma-2}\big(\mathrm{cap}_{\overline{\R_+^N}}(K,\psi_\gamma)+o(1)\big),\quad\text{as }\epsilon\to 0, 
\]
with $\mathrm{cap}_{\overline{\R^N_+}}(K,\psi_\gamma)$ being as in Definition \ref{def:rel_capacity}.
\end{theorem}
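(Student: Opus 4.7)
The plan is to prove matching upper and lower bounds via a blow-up analysis of the capacitary potential $V_\epsilon$ that minimizes $q(\cdot)$ subject to $u-\phi_0\in\HO{\Omega}{K_\epsilon}$. Both bounds are dictated by the same rescaling, suggested by Proposition~\ref{prop:vanish_phi_0}: one works with
$$\tilde V_\epsilon(y):=\epsilon^{-\gamma}\,V_\epsilon(\Phi^{-1}(\epsilon y)),$$
defined on the blown-up domain $\Phi(\Omega)/\epsilon$, which invades $\overline{\R^N_+}$ as $\epsilon\to 0$. The scaling factor $\epsilon^{N+2\gamma-2}$ naturally arises from the Dirichlet integral under this change of variables (Jacobian $\epsilon^N$, gradient factor $\epsilon^{-2}$, amplitude $\epsilon^{2\gamma}$), while the lower-order contribution $\int_\Omega c V_\epsilon^2\,dx$, after rescaling and with the help of the Hardy-type inequality of Lemma~\ref{lemma:hardy}, will turn out to be $o(\epsilon^{N+2\gamma-2})$ in dimension $N\geq 3$.

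For the upper bound, I would take $U\in\mathcal{D}^{1,2}(\overline{\R^N_+})$ minimizing $\mathrm{cap}_{\overline{\R^N_+}}(K,\psi_\gamma)$ and, with a cutoff $\eta\in C_c^\infty$ equal to $1$ in a neighbourhood of $0$ and supported where $\Phi$ is defined, use the trial function
$$u_\epsilon(x):=\phi_0(x)-\eta(x)\bigl[\psi_\gamma(\Phi(x))-\epsilon^\gamma U(\Phi(x)/\epsilon)\bigr].$$
By $\gamma$-homogeneity of $\psi_\gamma$ and the identity $U=\psi_\gamma$ on $K$ (inherent in the definition of the capacitary minimizer), one checks that $u_\epsilon=\phi_0$ on $K_\epsilon$ for $\epsilon$ small (using hypothesis~\eqref{eq:hp_blow_up_1_intr}), so $u_\epsilon-\phi_0\in\HO{\Omega}{K_\epsilon}$. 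Computing $q(u_\epsilon)$: the main term, produced by $\epsilon^\gamma U(\Phi(\cdot)/\epsilon)$, after the substitution $y=\Phi(x)/\epsilon$ and using $J_\Phi(0)=I_N$ yields $\epsilon^{N+2\gamma-2}\int_{\R^N_+}|\nabla U|^2\,dy+o(\epsilon^{N+2\gamma-2})$, with the $o$-term absorbing the first-order deviation of $\Phi^{-1}$ from a rigid motion; the remaining terms involve $\phi_0-\psi_\gamma\circ\Phi$, which is $o(\epsilon^\gamma)$ in $H^1$ on balls of radius $\sim\epsilon$ by Proposition~\ref{prop:vanish_phi_0}, yielding again negligible contributions.

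For the matching lower bound, the upper bound just obtained shows that $\int_{\R^N_+}|\nabla\tilde V_\epsilon|^2\,dy$ is uniformly bounded, so by Lemma~\ref{lemma:hardy} the weighted norm of $\tilde V_\epsilon/|y|$ in $L^2(\R^N_+)$ is controlled, giving weak compactness in $\mathcal{D}^{1,2}(\overline{\R^N_+})$ along a subsequence, with some limit $\tilde V$. The crucial and, in my view, hardest step is identifying $\tilde V$ as admissible for the limit capacitary problem: the constraint $V_\epsilon-\phi_0\in\HO{\Omega}{K_\epsilon}$ rescales to $\tilde V_\epsilon-\epsilon^{-\gamma}\phi_0(\Phi^{-1}(\epsilon\,\cdot))$ belonging to an $H^1_0$-type space on $\Phi(\Omega)/\epsilon\setminus\Phi(K_\epsilon)/\epsilon$, and passing to the limit requires combining the Mosco convergence~\eqref{eq:hp_blow_up_2_intr} with the blow-up~\eqref{eq:phi_0_psi_gamma} of $\phi_0$ to $\psi_\gamma$ coming from Proposition~\ref{prop:vanish_phi_0}, so as to obtain $\tilde V-\psi_\gamma\in\mathcal{D}^{1,2}(\overline{\R^N_+}\setminus K)$. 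Once this is established, weak lower semicontinuity yields
$$\liminf_{\epsilon\to 0}\epsilon^{2-N-2\gamma}\Capa_{\bar\Omega,c}(K_\epsilon,\phi_0)\geq\int_{\R^N_+}|\nabla\tilde V|^2\,dy\geq\mathrm{cap}_{\overline{\R^N_+}}(K,\psi_\gamma),$$
and since the capacitary minimizer is unique the full limit exists. Secondary technical points, which I also expect to require care, are controlling the corrections produced by $\Phi$ being only close to the identity (through $J_\Phi(0)=I_N$ and $\Phi\in C^{1,1}$) in the rescaled quadratic forms, and confirming via Lemma~\ref{lemma:hardy} that the $cV_\epsilon^2$ contribution remains strictly subordinate to the gradient term after rescaling.
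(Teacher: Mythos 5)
Your overall strategy (matching upper and lower bounds via the rescaling $\tilde V_\epsilon(y)=\epsilon^{-\gamma}V_\epsilon(\Phi^{-1}(\epsilon y))$) differs from the paper's, which never constructs a sharp competitor: the paper only needs the rough bound $\Capa_{\bar\Omega,c}(K_\epsilon,\phi_0)=O(\epsilon^{N+2\gamma-2})$ (Lemma~\ref{lemma:big_O_cap}, obtained from the elementary cutoff $\eta_\epsilon\phi_0$), runs the blow-up (Proposition~\ref{prop:blow_up}), and then extracts the exact constant by testing \eqref{eq:weak_V_K_f} with $V_{K_\epsilon,\phi_0}-\phi_0\eta_\epsilon$, rescaling, and passing to the limit in the resulting bilinear form — one factor converges weakly ($A(\epsilon\cdot)\nabla\tilde V_\epsilon$), the other strongly ($\nabla(\eta\tilde\phi_\epsilon)$), so no semicontinuity and no sharp competitor are needed. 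Your lower-bound half is essentially the same compactness/Mosco/Hardy argument as Proposition~\ref{prop:blow_up}, and is sound modulo the usual exhaustion in $R$.

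However, your upper-bound construction has a genuine gap. With
$u_\epsilon=\phi_0-\eta\bigl[\psi_\gamma\circ\Phi-\epsilon^\gamma U(\Phi(\cdot)/\epsilon)\bigr]$, one has $u_\epsilon-\phi_0=-\eta\cdot\epsilon^\gamma\,(\psi_\gamma-U)\bigl(\Phi(\cdot)/\epsilon\bigr)$, so the admissibility $u_\epsilon-\phi_0\in\HO{\Omega}{K_\epsilon}$ amounts to requiring that $\psi_\gamma-U$ vanish (in the $H^1_0$-sense) on $\Phi(K_\epsilon)/\epsilon$. But the definition of $U$ as minimizer only gives $U-\eta_K\psi_\gamma\in\DCK$, i.e. vanishing on $K$, and hypotheses \eqref{eq:hp_blow_up_1_intr}--\eqref{eq:hp_blow_up_2_intr} do not imply $\Phi(K_\epsilon)/\epsilon\subseteq K$ — see Example~\ref{ex:K_epsilon}(ii), where $\Phi(K_\epsilon)/\epsilon=K_1+(f(\epsilon)/\epsilon)K_2$ Mosco-converges to $K=K_1$ without ever being contained in it. So $u_\epsilon$ need not be an admissible competitor. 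To repair this you would have to replace $U$ by a Mosco recovery sequence $U_\epsilon$, using property (ii) of Definition~\ref{def:mosco_D} (via the equivalence in Remark~\ref{rmk:equiv_mosco} and Lemma~\ref{lemma:H_implies_D}) to produce $U_\epsilon$ with $U_\epsilon-\eta_K\psi_\gamma\in\mathcal{D}^{1,2}(\overline{\R^N_+}\setminus(\Phi(K_\epsilon)/\epsilon))$ and $U_\epsilon\to U$ strongly in $\mathcal{D}^{1,2}(\overline{\R^N_+})$; with that change the upper bound goes through. As written, though, the claim that ``$U=\psi_\gamma$ on $K$'' implies ``$u_\epsilon=\phi_0$ on $K_\epsilon$'' is false in general, which is precisely the kind of subtlety the Mosco hypothesis is there to handle.

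A smaller point: in your lower bound, the conclusion that ``the full limit exists'' follows simply from the matching bounds, not from uniqueness of the capacitary minimizer; uniqueness (Lemma~\ref{lemma:limit_pro}) is what lets one identify the weak subsequential limit $\tilde V$ with the actual potential and hence upgrade subsequential to full convergence of $\tilde V_\epsilon$ — a distinction that is harmless here but worth keeping straight. Also, besides Hardy you will need the Poincar\'e inequality on half-balls with a hole (Lemma~\ref{lemma:poincare}) to obtain the uniform $L^2(B_R^+)$ bound on $\tilde V_\epsilon$ that makes the Rellich/Mosco step work.
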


Combining Theorems \ref{thm:blow_up} and \ref{thm:sharp_asymp} we directly obtain the following corollary.
\begin{corollary}\label{cor:asy}
  Under the same assumptions and with the same notations of both Theorems \ref{thm:blow_up} and \ref{thm:sharp_asymp}, we have that 
\[		\lambda_\epsilon-\lambda_0=\epsilon^{N+2\gamma-2}
\big(\mathrm{cap}_{\overline{\R_+^N}}(K,\psi_\gamma)+o(1)\big),\quad\text{as }\epsilon\to 0.
	\]
\end{corollary}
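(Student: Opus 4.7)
The plan is to derive the corollary as an immediate combination of Theorems \ref{thm:sharp_asymp} and \ref{thm:blow_up}, treating it essentially as a substitution result, with only a minor bookkeeping step to verify that the two error terms interact correctly.

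First, I would check that the hypotheses of both theorems are satisfied simultaneously. The family $\{K_\epsilon\}_{\epsilon>0}$ is assumed to concentrate at $\{0\}\subseteq\partial\Omega$ as $\epsilon\to 0$, and since $\{0\}$ has zero relative Sobolev capacity in $\overline{\Omega}$ (being a single point, in dimension $N\geq 3$), the concentration hypothesis of Theorem \ref{thm:sharp_asymp} is fulfilled with limit set $K=\{0\}$. The remaining assumptions of Theorem \ref{thm:blow_up}, namely \eqref{eq:hp_boundary_1}, \eqref{eq:hp_blow_up_1_intr}, \eqref{eq:hp_blow_up_2_intr}, and $\mathrm{cap}_{\overline{\R^N_+}}(K)>0$, are carried verbatim from the statement of the corollary.

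Next, I would apply Theorem \ref{thm:sharp_asymp} to write
\[
\lambda_\epsilon-\lambda_0=\Capa_{\bar{\Omega},c}(K_\epsilon,\phi_0)\bigl(1+o(1)\bigr)\quad\text{as }\epsilon\to 0,
\]
and Theorem \ref{thm:blow_up} to express the right-hand side as
\[
\Capa_{\bar{\Omega},c}(K_\epsilon,\phi_0)=\epsilon^{N+2\gamma-2}\bigl(\mathrm{cap}_{\overline{\R_+^N}}(K,\psi_\gamma)+o(1)\bigr).
\]
Multiplying these two expansions and collecting the little-$o$ terms yields the claimed asymptotic. The only point worth a line of justification is that $(1+o(1))(\mathrm{cap}_{\overline{\R_+^N}}(K,\psi_\gamma)+o(1))=\mathrm{cap}_{\overline{\R_+^N}}(K,\psi_\gamma)+o(1)$, which holds regardless of whether $\mathrm{cap}_{\overline{\R_+^N}}(K,\psi_\gamma)$ is strictly positive or zero; in the latter degenerate case the statement reduces to $\lambda_\epsilon-\lambda_0=o(\epsilon^{N+2\gamma-2})$, still consistent with the formula.

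There is no real obstacle here, since the corollary is genuinely a plug-and-chug consequence of the two main theorems: all the substantive work, namely the eigenvalue variation formula and the blow-up analysis of the capacitary potential, is already packaged in Theorems \ref{thm:sharp_asymp} and \ref{thm:blow_up}. The only thing the proof must do is expose this transparently, and so my write-up would consist of a short paragraph invoking both theorems and performing the multiplication of the two asymptotic expansions.
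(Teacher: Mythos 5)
Your proposal is correct and coincides with the paper's argument, which is precisely that Corollary~\ref{cor:asy} follows immediately by substituting the expansion of Theorem~\ref{thm:blow_up} into that of Theorem~\ref{thm:sharp_asymp}. Your remark that $\{0\}$ has zero relative Sobolev capacity (so the concentration hypothesis of Theorem~\ref{thm:sharp_asymp} holds with limit set $\{0\}$, to be distinguished from the blow-up compact $K\subseteq\overline{\R^N_+}$ of Theorem~\ref{thm:blow_up}) and your treatment of the possibly degenerate case $\mathrm{cap}_{\overline{\R^N_+}}(K,\psi_\gamma)=0$ are both correct and complete the bookkeeping.
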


The expansion stated in Corollary \ref{cor:asy} provides the  sharp asymptotics of the eigenvalue variation 
if $\mathrm{cap}_{\overline{\R^N_+}}(K,\psi_\gamma)>0$. This happens e.g. whenever 
$K\sub\partial\R^N_+$ is a compact set such that $\mathrm{cap}_{\overline{\R^N_+}}(K)>0$, as proved in Proposition \ref{prop:capa_pos_1};
we observe that the validity of such result strongly relies on  the position of the nodal set of $\psi_\gamma$ with respect to the set $K$.

 On the other hand, if $K\sub\partial\R^N_+$ is compact, we have that $\mathrm{cap}_{\overline{\R^N_+}}(K)>0$ if its  $N-1$ dimensional Lebesgue measure is nonzero, see Proposition \ref{prop:measure_capa_1}.

\subsection{Sets scaling to an interior point}

Although the present study was mainly motivated by our interest in
the eigenvalue asymptotics for moving mixed Dirichlet-Neumann boundary conditions, our techniques also apply to another class of perturbations, without any substantial difference, in view of the various possibilities embraced by Theorem \ref{thm:sharp_asymp}. In particular, it is possible to state a result analogous to Theorem \ref{thm:blow_up} in the case in which the perturbing sets $K_\epsilon$ are concentrating at a point that lies in the interior of $\Omega$. 
In this case the limit problem is the one with homogeneous Neumann boundary conditions on $\partial\Omega$ and the perturbed problem can be thought of as $\Omega$ without a ``small'' hole, on which zero Dirichlet boundary conditions are prescribed.
We assume that $0\in\Omega$ is the ``limit'' of the concentrating subsets $K_\epsilon$ and we ask assumptions similar to \eqref{eq:hp_blow_up_1_intr}-\eqref{eq:hp_blow_up_2_intr} to be satisfied, that is
\begin{gather}
\text{there exists }M\sub\R^N~\text{compact such that }  K_\epsilon/\epsilon\sub M\quad\text{for all }\epsilon\in (0,1), \label{eq:hp_blow_up_1_omega_intr} \\
\begin{gathered}
\text{there exists }K\sub\R^N~\text{compact such that }\\
\R^N\setminus (K_\epsilon/\epsilon)\to \R^N\setminus K\quad\text{in the sense of Mosco, as }\epsilon\to 0.
\end{gathered}
\label{eq:hp_blow_up_2_omega_intr}
\end{gather}
 As before, these assumptions are fulfilled, for instance, in the case $K_\epsilon:=\epsilon K$, for a certain compact $K\sub\R^N$ such that $K_\epsilon\sub\Omega$ for every $\epsilon\in(0,1)$. Since $0\in\Omega$ is an interior point, from classical regularity results for elliptic equations (see e.g. \cite{Robbiano1988}), there exist $\kappa\in\N$ and a spherical harmonic $Z$ of degree $\kappa$ such that
\[
-\Delta_{\mathbb{S}^{N-1}}Z=\kappa(N+\kappa-2)Z\quad\text{in }\mathbb{S}^{N-1}
\]
and
\begin{equation}\label{eq:psi_hat_gamma}
\frac{\phi_0(\epsilon x)}{\epsilon^{\kappa}}\to 
\zeta_\kappa(x):=\abs{x}^{\kappa}Z\left(\frac{x}{\abs{x}}\right)\quad\text{in }H^1(B_R)~\text{as }\epsilon\to 0,
\end{equation}
for all $R>0$. We can now state the last main result of our paper, which is analogous to Theorem \ref{thm:blow_up}.

\begin{theorem}\label{thm:blow_up_3}
Let $N\geq3$
	and $\{K_\epsilon\}_{\epsilon>0} \sub\Omega$ be a family of compact sets concentrating at $\{0\}\sub\Omega$ as $\epsilon\to 0$. Let \eqref{eq:hp_blow_up_1_omega_intr}-\eqref{eq:hp_blow_up_2_omega_intr} hold 
  for some compact set $K\sub{\R^N}$ satisfying
$\mathrm{cap}_{\R^N}(K)>0$. 
Let 
 $\phi_0$ be as in \eqref{eq:phi0} and  $\kappa$, $\zeta_\kappa$ be as in \eqref{eq:psi_hat_gamma}.
Then
	\[	\Capa_{\bar{\Omega},c}(K_\epsilon,\phi_0)=\epsilon^{N+2\kappa-2}
\big(\mathrm{cap}_{\R^N}(K,\zeta_\kappa)+o(1)\big),\quad\text{as }\epsilon\to 0,
	\]
	where and $\mathrm{cap}_{\R^N}(K,\zeta_\kappa)$ is the standard Newtonian $\zeta_\kappa$-capacity of $K$ (see Definition \ref{def:newt_capa}).
\end{theorem}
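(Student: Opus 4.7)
My approach is to adapt the blow-up analysis carried out for the boundary case (Theorem \ref{thm:blow_up}) to the interior setting, where no boundary-straightening diffeomorphism is required and $\R^N_+$ is replaced throughout by $\R^N$. Let $V_\epsilon\in H^1(\Omega)$ be the unique minimizer realizing $\Capa_{\bar{\Omega},c}(K_\epsilon,\phi_0)$, so that $V_\epsilon-\phi_0\in\HO{\Omega}{K_\epsilon}$ and $V_\epsilon$ weakly solves $-\Delta V_\epsilon+cV_\epsilon=0$ on $\Omega\setminus K_\epsilon$ with homogeneous Neumann conditions on $\partial\Omega$. Introduce the rescaled capacitary potential
\[
\tilde V_\epsilon(y):=\epsilon^{-\kappa}V_\epsilon(\epsilon y),\qquad y\in\Omega_\epsilon:=\{y\in\R^N:\epsilon y\in\Omega\},
\]
which lives on domains $\Omega_\epsilon$ that exhaust $\R^N$ as $\epsilon\to 0$. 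A direct change of variables gives
\[
q(V_\epsilon)=\epsilon^{N+2\kappa-2}\int_{\Omega_\epsilon}|\nabla\tilde V_\epsilon|^2\dy+\epsilon^{N+2\kappa}\int_{\Omega_\epsilon}c(\epsilon y)\tilde V_\epsilon^{\,2}\dy,
\]
pinpointing $\epsilon^{N+2\kappa-2}$ as the natural prefactor. The trace condition becomes $\tilde V_\epsilon-\phi_0^{(\epsilon)}\in\HO{\Omega_\epsilon}{K_\epsilon/\epsilon}$, where $\phi_0^{(\epsilon)}(y):=\epsilon^{-\kappa}\phi_0(\epsilon y)\to\zeta_\kappa$ in $H^1_{\mathrm{loc}}(\R^N)$ by \eqref{eq:psi_hat_gamma}.

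For the upper bound, fix a competitor $u$ for $\mathrm{cap}_{\R^N}(K,\zeta_\kappa)$, so $u-\eta_K\zeta_\kappa\in \mathcal{D}^{1,2}(\R^N\setminus K)$ for a cutoff $\eta_K\in C_c^\infty(\R^N)$ equal to $1$ near $K$. By Mosco property (ii) of \eqref{eq:hp_blow_up_2_omega_intr}, choose $z_\epsilon\in H^1(\R^N)$ vanishing on $K_\epsilon/\epsilon$ with $z_\epsilon\to u-\eta_K\zeta_\kappa$ in $H^1(\R^N)$. Build a test function on $\Omega$ of the form
\[
U_\epsilon(x):=\phi_0(x)+\epsilon^\kappa\bigl(z_\epsilon(x/\epsilon)+\eta_K(x/\epsilon)\zeta_\kappa(x/\epsilon)-\eta_K(x/\epsilon)\phi_0^{(\epsilon)}(x/\epsilon)\bigr),
\]
localized by a fixed cutoff so as to stay in $\Omega$; for small $\epsilon$, this satisfies $U_\epsilon-\phi_0\in\HO{\Omega}{K_\epsilon}$. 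Expanding $q(U_\epsilon)$, the gradient of the rescaled perturbation dominates, and \eqref{eq:psi_hat_gamma} together with the strong convergence of $z_\epsilon$ yield
\[
\limsup_{\epsilon\to 0}\epsilon^{-(N+2\kappa-2)}\Capa_{\bar{\Omega},c}(K_\epsilon,\phi_0)\leq\int_{\R^N}|\nabla u|^2\dx;
\]
taking the infimum over admissible $u$ gives the desired upper bound by $\mathrm{cap}_{\R^N}(K,\zeta_\kappa)$.

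For the lower bound, the upper bound just proved, combined with the classical Hardy inequality on $\R^N$ (playing the role of Lemma \ref{lemma:hardy} and valid since $N\geq 3$), yields a uniform estimate on $\tilde V_\epsilon-\eta_K\phi_0^{(\epsilon)}$ (extended by zero outside $\Omega_\epsilon$) in $\mathcal{D}^{1,2}(\R^N)$. Extract a weakly convergent subsequence; Mosco property (i) of \eqref{eq:hp_blow_up_2_omega_intr} together with the support control \eqref{eq:hp_blow_up_1_omega_intr} ensure that the weak limit $W$ lies in $\mathcal{D}^{1,2}(\R^N\setminus K)$, so $V:=W+\eta_K\zeta_\kappa$ is admissible in the definition of $\mathrm{cap}_{\R^N}(K,\zeta_\kappa)$. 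The zero-order term carries an extra $\epsilon^2$ and hence does not contribute in the limit, and weak lower semicontinuity of the Dirichlet integral gives
\[
\liminf_{\epsilon\to 0}\epsilon^{-(N+2\kappa-2)}\Capa_{\bar{\Omega},c}(K_\epsilon,\phi_0)\geq\int_{\R^N}|\nabla V|^2\dx\geq\mathrm{cap}_{\R^N}(K,\zeta_\kappa).
\]
The main technical hurdle is that $\zeta_\kappa$ itself is not in $\mathcal{D}^{1,2}(\R^N)$ (it grows polynomially), so both bounds must be organized around the difference $\tilde V_\epsilon-\eta_K\phi_0^{(\epsilon)}$, and the Mosco convergence must be applied with the inhomogeneous boundary datum $\phi_0^{(\epsilon)}$ carefully separated off. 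Once this admissibility issue is resolved in the same vein as in Theorem \ref{thm:blow_up}, the matching upper and lower bounds conclude the proof.
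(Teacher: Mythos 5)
Your strategy — a $\Gamma$-convergence type argument with separate upper and lower bounds — is a genuinely different route from the paper's. The paper establishes a blow-up convergence for the rescaled potentials $\tilde V_\epsilon$ (the analogue of Proposition~\ref{prop:blow_up}) and then exploits the exact identity obtained by testing equation \eqref{eq:weak_V_K_f} with $V_{K_\epsilon,\phi_0}-\phi_0\eta_\epsilon$, which makes a separate matching upper bound unnecessary: weak convergence of $\tilde V_\epsilon$ against the fixed, compactly supported test function already yields the limit of $\Capa_{\bar{\Omega},c}(K_\epsilon,\phi_0)/\epsilon^{N+2\kappa-2}$. Your approach trades that identity for a direct $\limsup/\liminf$ argument; this is a reasonable and arguably more transparent alternative, but it makes the upper bound construction carry real weight, and this is where your argument breaks down.

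The competitor you propose for the upper bound is not admissible and, more seriously, does not have the right order of magnitude. Simplify your formula: since $\epsilon^\kappa\phi_0^{(\epsilon)}(x/\epsilon)=\phi_0(x)$, one finds
\[
U_\epsilon(x)=\bigl(1-\eta_K(x/\epsilon)\bigr)\phi_0(x)+\epsilon^\kappa z_\epsilon(x/\epsilon)+\epsilon^\kappa\eta_K(x/\epsilon)\zeta_\kappa(x/\epsilon).
\]
On $K_\epsilon$ (where $\eta_K(\cdot/\epsilon)=1$ and $z_\epsilon(\cdot/\epsilon)=0$) this gives $U_\epsilon(x)=\epsilon^\kappa\zeta_\kappa(x/\epsilon)$, so $U_\epsilon-\phi_0=\epsilon^\kappa\zeta_\kappa(x/\epsilon)-\phi_0(x)\neq 0$ there; the constraint $U_\epsilon-\phi_0\in\HO{\Omega}{K_\epsilon}$ fails. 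Moreover, away from a neighbourhood of the origin of size $O(\epsilon)$ your $U_\epsilon$ coincides with $\phi_0$, so $q(U_\epsilon)=q(\phi_0)+O(\epsilon^{N+2\kappa-2})=\lambda_0+O(\epsilon^{N+2\kappa-2})$. This only yields the trivial bound $\Capa_{\bar{\Omega},c}(K_\epsilon,\phi_0)\leq\lambda_0+o(1)$, not the sharp $O(\epsilon^{N+2\kappa-2})$ estimate. The correct competitor has the opposite structure: it should be small and concentrated near $K_\epsilon$, not close to $\phi_0$. Taking
\[
U_\epsilon(x):=\epsilon^\kappa z_\epsilon(x/\epsilon)+\eta_K(x/\epsilon)\phi_0(x),
\]
one has $U_\epsilon-\phi_0=\epsilon^\kappa z_\epsilon(x/\epsilon)+\bigl(\eta_K(x/\epsilon)-1\bigr)\phi_0(x)$, which vanishes near $K_\epsilon$ (hence lies in $\HO{\Omega}{K_\epsilon}$, once $z_\epsilon$ is approximated by $C_c^\infty$ functions via the local version of the Mosco property), and whose rescaling $\tilde U_\epsilon(y)=z_\epsilon(y)+\eta_K(y)\phi_0^{(\epsilon)}(y)$ converges to $z+\eta_K\zeta_\kappa=u$ in $\mathcal{D}^{1,2}(\R^N)$ by \eqref{eq:psi_hat_gamma} and the choice of $z_\epsilon$; this yields $\limsup_{\epsilon\to 0}\epsilon^{-(N+2\kappa-2)}\Capa_{\bar{\Omega},c}(K_\epsilon,\phi_0)\leq\int_{\R^N}\abs{\nabla u}^2\dx$. (You also need to first reduce to $z\in C_c^\infty(\R^N\setminus K)$, since the Mosco property as stated applies to $H^1_0$ functions; the local reformulation in the spirit of Remark~\ref{rmk:equiv_mosco} handles this.) Your lower bound argument — a priori estimate of order $\epsilon^{N+2\kappa-2}$ via a cut-off of $\phi_0$, Hardy's inequality on balls, extraction of a weak limit in $\mathcal{D}^{1,2}(\R^N)$, admissibility of the limit via Mosco property~(i), and weak lower semicontinuity — is sound and is close in spirit to the paper's blow-up analysis.
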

We point out that, in general, $\mathrm{cap}_{\R^N}(K,\zeta_\kappa)$ may not be strictly positive, since $K$, still having positive capacity, may happen to be a subset of the zero level set of $\zeta_\kappa$. In Lemma \ref{lemma:capa_pos} we provide sufficient conditions for 
$\mathrm{cap}_{\R^N}(K,\zeta_\kappa)$ to be strictly positive:
e.g. this happens when $K$ has nonzero capacity whereas the intersection of $K$ with the nodal set of $\zeta_\kappa$ has zero capacity.
We refer to \cite[Theorem 4.15]{Evans2015} for a sufficient condition for $\capa_{\R^N}(K)>0$: more precisely, we have that $\capa_{\R^N}(K)>0$ if its $N$-dimensional Lebesgue measure is nonzero.

The paper is organized as follows: in Section \ref{sec:prelim} we focus on the notion of capacity (as given in Definitions \ref{def:capacity} and \ref{def:rel_capacity}) and we prove some important properties (such as Propositions \ref{prop:capa_pos_1} and \ref{prop:measure_capa_1}), also in relation with the notion of concentration of sets. In Section \ref{sec:continuity} we prove the continuity of eigenvalues $\lambda_n(\Omega;K)$ with respect to $\Capa_{\bar{\Omega}}(K)$, i.e. Theorem \ref{thm:cont_eigen}. In Section \ref{sec:sharp_asym} we prove our first main result Theorem \ref{thm:sharp_asymp}. In Section \ref{sec:blow_up} we prove our second main result Theorem \ref{thm:blow_up} and finally, in Section \ref{sec:blow_up_general}, we prove Theorem \ref{thm:blow_up_3}.

\subsection{Notation}
Let us fix some notation we use throughout the paper:
\begin{itemize}
	\item[-] $\N_*:=\N\setminus\{0\}$;
	\item[-] $B_R:=\{x\in\R^N\colon \abs{x}< R \}$ and $S_R=\partial B_R$ for, respectively, balls and spheres centered at the origin;
	\item[-] $\R^N_+:=\{(x_1,\dots,x_N)\in\R^N\colon x_N>0\}$ for the upper half space;
	\item[-] we may identify $\R^{N-1}:=\partial\R^N_+$;
	\item[-] $B_R^+:=B_R\cap\R^N_+$ and $S_R^+:=\partial B_R^+\cap \R^N_+$ for half balls and half spheres;
		\item[-] $\mathbb{S}^{N-1}:=S_1$ and $\mathbb{S}^{N-1}_+:=S_1^+$ denote respectively the unitary sphere and upper unitary half-sphere;
	\item[-] $B_R':=B_R\cap\partial\R^N_+$.
\end{itemize}

\section{Preliminaries on concentration of sets and capacity}\label{sec:prelim}

In this section we focus on the notions of capacity and  concentration of sets (see Definitions \ref{def:capacity} and \ref{def:concentr}): we prove some basic properties and we investigate their mutual relations. We start by mentioning the existence of a capacitary potential, that is to say a function that achieves the infimum in the definition of capacity. We omit the proof since it follows the classical one.

\begin{proposition}[Capacity is Achieved]\label{prop:def_V_K_f}
Let $K\subseteq\overline{\Omega}$ be compact, $f\in H^1(\Omega)$ and $c\in L^\infty(\Omega)$ satisfying \eqref{eq:c_assumption}. The $f$-capacity of $K$, as introduced in Definition \ref{def:f-capacity}, is uniquely achieved, i.e. there exists a unique $V_{K,f,c}\in H^1(\Omega)$ which satisfies 
\begin{equation*}
V_{K,f,c}-f \in \HO{\Omega}{K} \qquad\text{and}\qquad
\Capa_{\bar{\Omega},c}(K,f)=q(V_{K,f,c}).
\end{equation*}
\end{proposition}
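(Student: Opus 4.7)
The plan is to use the direct method of the calculus of variations, exploiting the fact that assumption \eqref{eq:c_assumption} makes $q(\cdot)$ a norm squared on $H^1(\Omega)$ equivalent to $\|\cdot\|_{H^1(\Omega)}^2$, as noted right after \eqref{eq:scalar_prod}. In particular $q(\cdot,\cdot)$ is a scalar product on $H^1(\Omega)$ whose induced norm is equivalent to the standard one, so $(H^1(\Omega),q(\cdot,\cdot))$ is a Hilbert space and $\HO{\Omega}{K}$ is a closed subspace of it.

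First, I would reduce the problem to a linear one by the translation $u = f + w$, $w \in \HO{\Omega}{K}$. Then
\[
q(u) = q(w) + 2\, q(w,f) + q(f),
\]
so minimizing $q(u)$ over $\{u\in H^1(\Omega): u-f\in \HO{\Omega}{K}\}$ is equivalent to minimizing the functional $J(w):=q(w)+2\,q(w,f)$ over the Hilbert space $\HO{\Omega}{K}$. Since $v\mapsto -q(v,f)$ is a continuous linear functional on $\HO{\Omega}{K}$, the Riesz representation theorem in the Hilbert space $(\HO{\Omega}{K},q(\cdot,\cdot))$ gives a unique $w^\ast\in \HO{\Omega}{K}$ such that
\[
q(w^\ast,v) = -q(f,v) \qquad \text{for all } v\in \HO{\Omega}{K}.
\]
This is precisely the Euler--Lagrange equation associated with $J$, and $w^\ast$ is its unique minimizer. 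Setting $V_{K,f,c}:=f+w^\ast$, we then have $V_{K,f,c}-f\in \HO{\Omega}{K}$ and $q(V_{K,f,c})=\Capa_{\bar\Omega,c}(K,f)$.

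As an alternative (and as a cross-check), existence can be obtained directly: take a minimizing sequence $\{u_n\}\subset f+\HO{\Omega}{K}$; by the norm equivalence it is bounded in $H^1(\Omega)$, hence a subsequence converges weakly to some $V\in H^1(\Omega)$. Since $f+\HO{\Omega}{K}$ is convex and closed (by closedness of $\HO{\Omega}{K}$), it is weakly closed, so $V-f\in \HO{\Omega}{K}$; and weak lower semicontinuity of the norm $\sqrt{q(\cdot)}$ gives $q(V)\le \liminf q(u_n)=\Capa_{\bar\Omega,c}(K,f)$. Uniqueness then follows from strict convexity via the parallelogram identity: if $V_1,V_2$ were two minimizers, then $(V_1+V_2)/2-f\in \HO{\Omega}{K}$ and
\[
q\!\left(\tfrac{V_1+V_2}{2}\right) + q\!\left(\tfrac{V_1-V_2}{2}\right) = \tfrac{1}{2}\bigl(q(V_1)+q(V_2)\bigr) = \Capa_{\bar\Omega,c}(K,f),
\]
forcing $q((V_1-V_2)/2)=0$ and, by norm equivalence, $V_1=V_2$.

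There is no serious obstacle in the argument; the only point one has to be mindful of is that the admissible class $f+\HO{\Omega}{K}$ is an affine subspace rather than a linear one, which is why the shift $w=u-f$ (or, equivalently, the affine-convexity based direct method) is the cleanest route.
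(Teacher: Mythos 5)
Your proof is correct, and since the paper explicitly omits the argument as ``the classical one,'' your two presentations (Riesz representation on the Hilbert space $(\HO{\Omega}{K},q)$ after the shift $w=u-f$, and the direct method on the closed convex affine set $f+\HO{\Omega}{K}$ with uniqueness via the parallelogram law) are precisely the standard argument the authors have in mind. There is nothing to compare against and no gap; the key observations you make --- that $q$ is an equivalent Hilbert norm squared on $H^1(\Omega)$ and that $\HO{\Omega}{K}$ is a closed subspace --- are exactly what the proposition rests on.
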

Since in the following the function $c$ is fixed, for the sake of brevity we will  write  
\[
V_{K,f}:=V_{K,f,c}
\]
 omitting the dependence on $c$ in the  notation.
 We observe that $V_{K,f}$ satisfies
	\begin{equation*}
	\begin{bvp}
	-\Delta V_{K,f}+cV_{K,f} &=0, &&\text{in }\Omega\setminus K, \\
	\frac{\partial V_{K,f}}{\partial \nnu}&=0, &&\text{on } \partial\Omega\setminus K,\\
	V_{K,f}&=f , &&\text{on } K,
	\end{bvp}
	\end{equation*}
in a weak sense, that is $V_{K,f}-f\in\HO{\Omega}{K}$ and
	\begin{equation}\label{eq:weak_V_K_f}
		q(V_{K,f},\phi)=\int_{\Omega}(\nabla V_{K,f}\cdot \nabla \phi+cV_{K,f} \phi)\dx=0\quad\text{for all }\phi\in \HO{\Omega}{K}.
	\end{equation}

\begin{remark}\label{rmk:def_cap}
	In the particular case $c,f\equiv 1$ (as in Definition \ref{def:capacity}), we have that 
the potential $V_K:=V_{K,1} \in H^1(\Omega)$ satisfies
	\begin{equation}\label{eq:weak_V_K_1}
				V_K-1\in\HO{\Omega}{K} \qquad\text{and}\qquad
				\int_{\Omega}(\nabla V_K\cdot\nabla \phi+V_K\phi)\dx=0\quad\text{for all }\phi\in \HO{\Omega}{K}.
	\end{equation}
It is easy to verify that $V_K^-,(V_K-1)^+\in \HO{\Omega}{K}$, so that we can choose $\phi=V_K^-$ and $\varphi=(V_K-1)^+$ as test functions in the above equation, thus obtaining that $V_K^-\equiv 0$ and $(V_K-1)^+\equiv 0$, i.e. 
\begin{equation}\label{eq:bound-VK}
  0\leq V_K(x)\leq 1\quad\text{for a.e. }x\in \Omega.
\end{equation}
\end{remark}

The following proposition asserts that the Sobolev spaces $H^1(\Omega)$ and $\HO{\Omega}{K}$ coincide if and only if the set $K$ has zero capacity, and draws conclusions on the eigenvalues of \eqref{eq:weak_mixed}.

\begin{proposition}\label{prop:Kequiv}
	Let $K\subseteq\overline{\Omega}$ be compact. The following three assertions are equivalent:
	\begin{itemize}
		\item[(i)] $\Capa_{\bar{\Omega}}(K)=0$;
		\item[(ii)] $H^1(\Omega)=\HO{\Omega}{K}$;
		\item[(iii)] $\lambda_n(\Omega;K)=\lambda_n$ for every $n\in\N_*$.
	\end{itemize}
\end{proposition}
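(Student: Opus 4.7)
I prove the cycle (i) $\Rightarrow$ (ii) $\Rightarrow$ (iii) $\Rightarrow$ (ii). The implications (ii) $\Rightarrow$ (i) and (ii) $\Rightarrow$ (iii) are both immediate: once $H^1(\Omega)=\HO{\Omega}{K}$, the function $u\equiv 0$ satisfies $u-1=-1\in\HO{\Omega}{K}$ and has zero $H^1$-norm, yielding $\Capa_{\bar{\Omega}}(K)=0$; and the bilinear form $q$ on the common space $H^1(\Omega)=\HO{\Omega}{K}$ produces the same sequence of Rayleigh quotients, so the same eigenvalues via the min-max formula \eqref{eq:minmax}.

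\textbf{Implication (i) $\Rightarrow$ (ii).} A minimizing sequence for the capacity, or the capacitary potential of Proposition \ref{prop:def_V_K_f} applied with $c\equiv 1$, $f\equiv 1$, provides $u_n\in H^1(\Omega)$ with $u_n-1\in\HO{\Omega}{K}$ and $\|u_n\|_{H^1}\to 0$. Setting $w_n:=1-u_n$, the linearity of $\HO{\Omega}{K}$ yields $w_n\in\HO{\Omega}{K}$, while $w_n\to 1$ in $H^1(\Omega)$; closedness of $\HO{\Omega}{K}$ then forces $1\in\HO{\Omega}{K}$. Unpacking the definition of $\HO{\Omega}{K}$, I pick $\eta_n\in C_c^\infty(\overline{\Omega}\setminus K)$ with $\eta_n\to 1$ in $H^1(\Omega)$. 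For any $u\in C^\infty(\overline{\Omega})$, the product $u\eta_n$ lies in $C_c^\infty(\overline{\Omega}\setminus K)$ and converges to $u$ in $H^1(\Omega)$: the only delicate term in $\|u-u\eta_n\|_{H^1}$ is $\|u\,\nabla\eta_n\|_{L^2}\leq \|u\|_{L^\infty}\|\nabla\eta_n\|_{L^2}$, which vanishes since $\eta_n\to 1$ in $H^1$ implies $\nabla\eta_n\to 0$ in $L^2$. Because $\Omega$ is Lipschitz, $C^\infty(\overline{\Omega})$ is dense in $H^1(\Omega)$; hence $H^1(\Omega)\subseteq\HO{\Omega}{K}$, with the reverse inclusion being automatic.

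\textbf{Implication (iii) $\Rightarrow$ (ii).} Let $\mu_1<\mu_2<\dots$ be the distinct Neumann eigenvalues, let $E_k^N\subset H^1(\Omega)$ be the associated Neumann eigenspaces of multiplicity $m_k$, and let $E_k^K\subset\HO{\Omega}{K}$ be the analogous mixed eigenspaces. By (iii) the two spectra coincide with multiplicities, so $\dim E_k^K=m_k$. I show by induction that $E_k^K=E_k^N$. For $k=1$: every $u\in E_1^K$ satisfies $q(u)/\|u\|_{L^2(\Omega)}^2=\mu_1$, which by \eqref{eq:minmax} is also the Rayleigh minimum over the larger space $H^1(\Omega)$, so $u$ minimises over $H^1(\Omega)$ and is therefore a Neumann $\mu_1$-eigenfunction, proving $E_1^K\subseteq E_1^N$; equality follows from the dimension count. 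Inductively, set $F:=\bigoplus_{j<k}E_j^N=\bigoplus_{j<k}E_j^K$ (the identification being the inductive hypothesis). For $u\in E_k^K$, the $L^2$-orthogonality between mixed eigenspaces with distinct eigenvalues gives $u\perp F$ in $L^2(\Omega)$; since $u$ achieves $\mu_k=\min\{q(v)/\|v\|_{L^2(\Omega)}^2: v\in H^1(\Omega),\ v\perp F\text{ in }L^2(\Omega)\}$, it is a Neumann $\mu_k$-eigenfunction, so $E_k^K\subseteq E_k^N$ and equality follows again by dimension.

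\textbf{Conclusion and main obstacle.} With $\phi_n\in\HO{\Omega}{K}$ for every $n\in\N_*$, completeness of $\{\phi_n/\sqrt{\lambda_n}\}$ as an orthonormal basis of $(H^1(\Omega),q)$ yields, for each $u\in H^1(\Omega)$, a Fourier expansion $u=\sum_n d_n\phi_n$ with $d_n=(u,\phi_n)_{L^2(\Omega)}$ that converges in $H^1(\Omega)$ (since Parseval gives $q(u)=\sum_n\lambda_n d_n^2<\infty$, hence $q(u-S_N)\to 0$ for the partial sums $S_N$). Every partial sum lies in $\HO{\Omega}{K}$, which is closed in $H^1(\Omega)$, so $u\in\HO{\Omega}{K}$, completing (ii). The main obstacle is the inductive step in (iii) $\Rightarrow$ (ii): one must carefully handle possibly multiple eigenvalues and invoke the constrained min-max characterisation of $\mu_k$ on $F^\perp\cap H^1(\Omega)$ to upgrade a mixed eigenfunction to a Neumann eigenfunction, after which dimension counting closes the argument.
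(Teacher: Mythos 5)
Your proof is correct and follows the same overall architecture as the paper's. The argument for (i)\,$\Rightarrow$\,(ii) is the same idea as the paper's (multiply a generic $u\in C^\infty(\overline{\Omega})$ by a nearly-$1$ cutoff that is already in $C_c^\infty(\overline{\Omega}\setminus K)$, then estimate the $H^1$-error using that $u$ and $\nabla u$ are bounded), just reorganized: the paper works with $u(1-u_n)$ and has to argue separately that this product lies in $\HO{\Omega}{K}$, whereas you first extract $\eta_n\in C_c^\infty(\overline{\Omega}\setminus K)$ and multiply there, which is marginally cleaner. Your (ii)\,$\Rightarrow$\,(i) is simpler than the paper's: you observe that $u\equiv 0$ is an admissible competitor once $-1\in\HO{\Omega}{K}$, while the paper goes through the equation for the capacitary potential $V_K$ tested against itself; both are one-liners. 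The real added value is in (iii)\,$\Rightarrow$\,(ii): the paper merely asserts ``for every $n$ there exists an eigenfunction belonging to $H^1_{0,K}(\Omega)$ associated to $\lambda_n$'' and then invokes the spectral theorem, but the non-obvious step — that a mixed eigenfunction with eigenvalue $\lambda_n$ is in fact a Neumann eigenfunction — is not justified there. Your inductive Courant--Fischer argument ($E_k^K\subseteq E_k^N$ via the constrained Rayleigh quotient, then equality by dimension count) supplies exactly the missing verification, and the closing Fourier-series/closedness argument correctly converts the inclusion of all Neumann eigenfunctions into the equality $\HO{\Omega}{K}=H^1(\Omega)$. So: same route as the paper, with a genuine gap-filling contribution at the (iii)\,$\Rightarrow$\,(ii) step.
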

\begin{proof}
	In order to prove that (i) implies (ii) it is sufficient to prove that $H^1(\Omega)\subseteq \HO{\Omega}{K}$ since the converse is trivial. We actually prove that $C^\infty(\overline{\Omega})\subseteq \HO{\Omega}{K}$ and the claim follows by density.
	By assumption (i), there exists $\{u_n\}_{n\geq1}\subset H^1(\Omega)$ such that $u_n-1\in \HO{\Omega}{K}$ for every $n\in\N_*$ and $\norm{u_n}_{H^1(\Omega)}\to0$ as $n\to\infty$.	
	Let $u\in C^\infty(\overline{\Omega})$ and let us consider the sequence $\{u(1-u_n)\}_{n\geq1}\subset \HO{\Omega}{K}$. We  claim that  $u(1-u_n)\to u$ in $ H^1(\Omega)$. Indeed
	\begin{align*}
	\norm{u-u(1-u_n)}_{H^1(\Omega)}^2 &=\norm{u u_n}_{H^1(\Omega)}^2 \\
	&\leq 2\int_{\Omega}(u^2\abs{\nabla u_n}^2+u_n^2\abs{\nabla u}^2)\dx+\int_{\Omega}u^2 u_n^2\dx \\
	&\leq 4 \max\{\norm{u}_{L^\infty(\Omega)}^2,\norm{\nabla u}_{L^\infty(\Omega)}^2\}\norm{u_n}_{H^1(\Omega)}^2\to 0,
	\end{align*}
	as $n\to\infty$.
	
	We now prove that (ii) implies (i). Let us consider the equation \eqref{eq:weak_V_K_1} solved by $V_K$.
	Since $\HO{\Omega}{K}=H^1(\Omega)$, we can choose $\varphi=V_K$ in \eqref{eq:weak_V_K_1} and then reach the conclusion.
	
	Finally, let us show that (ii) is equivalent to (iii). The fact that (ii) implies (iii) follows from the min-max characterization \eqref{eq:minmax}. Conversely, suppose that (iii) holds, i.e. $\lambda_n(\Omega;K)=\lambda_n$ for every $n\in\N_*$. Then for every $n\in\N_*$ there exists an eigenfunction belonging to $H^1_{0,K}(\Omega)$ associated to $\lambda_n$. By the Spectral Theorem, there exists an orthonormal basis of $H^1(\Omega)$ made of $H^1_{0,K}$-functions, which implies that property (ii) holds.
\end{proof}

\begin{remark}\label{rmk:zero_cap_equiv}
	An inspection of the proof of Proposition \ref{prop:Kequiv} shows that (ii) actually implies that $\Capa_{\bar{\Omega},c}(K,f)=0$ for all $f\in H^1(\Omega)$, and so
	\[
	\Capa_{\bar{\Omega}}(K)=0\quad\text{if and only if} \quad \Capa_{\bar{\Omega},c}(K,f)=0\quad\text{for all }f\in H^1(\Omega).
	\]
	Moreover, for any $f\in H^1(\Omega)$,
we
 trivially have that $\Capa_{\bar{\Omega},c}(K,f)=0$ if and only if  $V_{K,f}=0$.
\end{remark}

\begin{example}[Capacity of a Point]\label{ex:capa_point}
	Let $x_0\in\overline{\Omega}$, then $\Capa_{\bar{\Omega}}(\{x_0\})=0$.
\end{example}
\begin{proof}
	If $N\geq3$, let $v_n\in C^\infty(\overline{\Omega})$ be such that
	$v_n(x)=1$ if $x\in B(x_0,\frac1n)\cap \overline{\Omega}$, 
	$v_n(x)=0$ if $x\in \overline{\Omega}\setminus B(x_0,\frac2n)$, 
$0\leq v_n(x)\leq 1$ and $\abs{\nabla v_n(x)}\leq 2n$ for all $x\in \overline{\Omega}$.
	It is easy to prove that $q(v_n)\to 0$, as $n\to \infty$ thus concluding the proof for $N\geq 3$.
If $N=2$, we can instead consider
$v_n\in H^1(\Omega)$  defined as 
	$v_n(x)=1$ if $x\in B(x_0,\frac1n)\cap \overline{\Omega}$, 
	$v_n(x)=0$ if $x\in \overline{\Omega}\setminus B(x_0,\frac1{\sqrt{n}})$, 
$v_n(x)=(\log n)^{-1}(-\log n-2\log |x-x_0|)$ 
if $x\in \overline{\Omega}\cap \big(B(x_0,\frac1{\sqrt{n}})\setminus B(x_0,\frac1{n})\big)$.
	It is easy to prove that $q(v_n)\to 0$, as $n\to \infty$ thus concluding the proof for $N=2$.
\end{proof}

\begin{remark}\label{lemma:concentr_propert}
	Let $\{K_\epsilon\}_{\epsilon>0},K\sub\overline{\Omega}$ be compact sets such that $K_\epsilon$ is concentrating at $K$ as $\epsilon\to 0$. Then, for any $\phi\in C_c^\infty(\overline{\Omega}\setminus K)$, there exists $\epsilon_\phi>0$ such that
$\phi\in C_c^\infty(\overline{\Omega}\setminus K_\epsilon)$ for all $\epsilon<\epsilon_\phi$.
Moreover $\underset{\epsilon>0}{\bigcap}K_\epsilon\subseteq K$.
      \end{remark}

\begin{example}\label{ex:concentr_sets}
	An example of concentrating sets is a family of compact sets decreasing as $\epsilon\to0$. Precisely, let $\{K_\epsilon\}_{\epsilon>0}$ be a family of compact subsets of $\overline{\Omega}$ such that $K_{\epsilon_2}\subseteq K_{\epsilon_1}$ for any $\epsilon_2\leq\epsilon_1$ and let $K\subseteq \overline{\Omega}$ be a compact set such that $K=\cap_{\epsilon>0}K_\epsilon$. Then, arguing by contradiction, thanks to Bolzano-Weierstrass Theorem in $\R^N$, it is easy to prove that $K_\epsilon$ is concentrating at $K$.
\end{example}

With the next proposition we emphasize what is the relation between the notion of concentration of sets and that of convergence of capacities: it turns out that convergence holds if the limit set has zero capacity.

\begin{proposition}\label{prop:cont_cap}
	Let $K\subseteq\overline{\Omega}$ be a compact set and let $\{K_\epsilon\}_{\epsilon>0}$ be a family of compact subsets of $\overline{\Omega}$ concentrating at $K$.
If $\Capa_{\bar{\Omega}}(K)=0$ then 
	\begin{equation*}
		V_{K_\epsilon,f}\to V_{K,f}\quad\text{in }H^1(\Omega)\quad\text{and}\quad 
		\Capa_{\bar{\Omega},c}(K_\epsilon,f)\to \Capa_{\bar{\Omega},c}(K,f)\quad\text{as }\epsilon\to 0
              \end{equation*}
for all $f\in H^1(\Omega)$ and all $c\in L^\infty(\Omega)$ satisfying \eqref{eq:c_assumption}. This result holds true, in particular, for the Sobolev capacity (see Definition \ref{def:capacity}) and its potentials, corresponding to the case in which $c\equiv 1$.
\end{proposition}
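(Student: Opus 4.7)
The plan is to reduce the statement to the convergence $\Capa_{\bar\Omega,c}(K_\epsilon,f)\to 0$ and then exhibit, for each $\epsilon$ small enough, an explicit low-energy competitor in the variational characterization of $\Capa_{\bar\Omega,c}(K_\epsilon,f)$. Since $\Capa_{\bar\Omega}(K)=0$, Proposition \ref{prop:Kequiv} gives $H^1(\Omega)=\HO{\Omega}{K}$, and then Remark \ref{rmk:zero_cap_equiv} yields $\Capa_{\bar\Omega,c}(K,f)=0$ and $V_{K,f}=0$ for every $f\in H^1(\Omega)$. Because $V_{K_\epsilon,f}$ attains $\Capa_{\bar\Omega,c}(K_\epsilon,f)$ and $q$ is equivalent to the $H^1$-norm, I only need to prove capacity convergence, as then
\[
\|V_{K_\epsilon,f}\|_{H^1(\Omega)}^2 \leq C\,q(V_{K_\epsilon,f}) = C\,\Capa_{\bar\Omega,c}(K_\epsilon,f)\longrightarrow 0
\]
automatically gives $V_{K_\epsilon,f}\to V_{K,f}=0$ in $H^1(\Omega)$.

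To prove the capacity convergence I would exploit the identification $H^1(\Omega)=\HO{\Omega}{K}$ once more to pick an approximating sequence $\{f_n\}\subset C_c^\infty(\overline{\Omega}\setminus K)$ with $f_n\to f$ in $H^1(\Omega)$. For each $n$, the set $\supp(f_n)$ is compact in $\R^N$ and disjoint from $K$, so $U_n:=\R^N\setminus\supp(f_n)$ is an open subset of $\R^N$ containing $K$. The concentration assumption (Definition \ref{def:concentr}) then produces $\epsilon_n>0$ such that $K_\epsilon\subset U_n$, equivalently $\supp(f_n)\cap K_\epsilon=\emptyset$, for every $\epsilon<\epsilon_n$, so that $f_n\in C_c^\infty(\overline{\Omega}\setminus K_\epsilon)\subset\HO{\Omega}{K_\epsilon}$.

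The trial function $w_{\epsilon,n}:=f-f_n$ then satisfies $w_{\epsilon,n}-f=-f_n\in\HO{\Omega}{K_\epsilon}$ whenever $\epsilon<\epsilon_n$, so it is admissible in the minimization defining the $f$-capacity of $K_\epsilon$, yielding
\[
\Capa_{\bar\Omega,c}(K_\epsilon,f)\leq q(f-f_n)\leq (1+\|c\|_{L^\infty(\Omega)})\,\|f-f_n\|_{H^1(\Omega)}^2\quad\text{for }\epsilon<\epsilon_n.
\]
Given $\eta>0$, fix $n$ large enough that the right-hand side is less than $\eta$; then $\Capa_{\bar\Omega,c}(K_\epsilon,f)<\eta$ for all $\epsilon<\epsilon_n$, and letting $\eta\to 0$ gives the desired convergence. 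I do not foresee any substantial obstacle: the only point that deserves care is checking that $U_n$ is open in $\R^N$ (not merely in $\overline{\Omega}$) so that Definition \ref{def:concentr} is genuinely applicable, which is automatic since $\supp(f_n)$ is compact in $\R^N$. The special case $c\equiv 1$, $f\equiv 1$ mentioned in the statement is then recovered directly.
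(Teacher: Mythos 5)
Your proof is correct, and it follows a genuinely different route from the paper's. The paper proceeds by a compactness argument: the uniform bound $q(V_{K_\epsilon,f})\leq q(f)$ yields a weakly convergent subsequence of potentials; the limit is shown to satisfy the equation for $V_{K,f}$ by testing against $C_c^\infty(\overline{\Omega}\setminus K)$ via Remark~\ref{lemma:concentr_propert} and invoking density, and the identification $\HO{\Omega}{K}=H^1(\Omega)$ forces the limit to be zero; capacity convergence then follows from the integral identities \eqref{eq:cont_cap_3}--\eqref{eq:cont_cap_4}, and Urysohn's subsequence principle removes the subsequence. You instead avoid compactness and weak limits altogether: using $\HO{\Omega}{K}=H^1(\Omega)$ to approximate $f$ by $f_n\in C_c^\infty(\overline{\Omega}\setminus K)$, you observe that $\supp(f_n)$ is compact in $\R^N$ and disjoint from $K$, so $U_n=\R^N\setminus\supp(f_n)$ is a legitimate open neighbourhood of $K$ in the sense of Definition~\ref{def:concentr}; for $\epsilon$ small, $f_n\in\HO{\Omega}{K_\epsilon}$, and the explicit competitor $f-f_n$ gives $\Capa_{\bar{\Omega},c}(K_\epsilon,f)\leq q(f-f_n)$, which can be made arbitrarily small. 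The $H^1$-convergence of the potentials to $V_{K,f}=0$ then drops out since $q$ dominates a multiple of $\|\cdot\|_{H^1(\Omega)}^2$ and $V_{K_\epsilon,f}$ is a minimizer. Your argument is more elementary and even slightly more informative, because it produces a quantitative upper bound for $\Capa_{\bar\Omega,c}(K_\epsilon,f)$ in terms of the best approximation of $f$ in $H^1(\Omega)$ by functions vanishing near $K$; the paper's compactness route, on the other hand, is the template reused (with more work) in Lemma~\ref{lemma:L^2_norm} and Proposition~\ref{prop:blow_up}, where a direct competitor argument would not suffice.
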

\begin{proof}
	Since $q(V_{K_\epsilon,f})\leq q(f)$ for all $\epsilon>0$, then $\{V_{K_\epsilon,f}\}_\epsilon$ is bounded in $H^1(\Omega)$ and so there exists $W\in H^1(\Omega)$ such that, along a sequence $\epsilon_n\to0$,
	\begin{equation*}
		V_{K_{\epsilon_n},f}\rightharpoonup W\quad\text{weakly in }H^1(\Omega)~\text{as }n\to\infty,
	\end{equation*}
	that is
	\begin{equation}\label{eq:cont_cap_1}
		\int_{\Omega}(\nabla V_{K_{\epsilon_n},f}\cdot \nabla \phi+cV_{K_{\epsilon_n},f} \phi)\dx\to\int_{\Omega}(\nabla W\cdot \nabla \phi+cW \phi)\dx \quad\text{for all }\phi\in H^1(\Omega).
	\end{equation}
	Therefore, taking into account Remark \ref{lemma:concentr_propert} and the equation solved by $V_{K_\epsilon,f}$ \eqref{eq:weak_V_K_f}, we have that
	\begin{equation}\label{eq:cont_cap_2}
		\int_{\Omega}(\nabla W\cdot \nabla \phi+cW \phi)\dx=0 
	\end{equation}
for all $\phi\in C_c^\infty(\overline{\Omega}\setminus K)$
	and then, by density, for all $\phi\in \HO{\Omega}{K}$. 
Moreover, taking $\varphi=V_{K,f}-f$ (respectively $\varphi=V_{K_\epsilon,f}-f$) in the equation \eqref{eq:weak_V_K_f} for $V_{K,f}$ (respectively $V_{K_\epsilon,f}$), we obtain
	\begin{equation}\label{eq:cont_cap_4}
		\Capa_{\bar{\Omega},c}(K,f)=\int_{\Omega}(\nabla V_{K,f}\cdot\nabla f +cV_{K,f}f)\dx,
	\end{equation}
	respectively
	\begin{equation}\label{eq:cont_cap_3}
		\Capa_{\bar{\Omega},c}(K_\epsilon,f)=\int_{\Omega}(\nabla V_{K_\epsilon,f}\cdot\nabla f +cV_{K_\epsilon,f}f)\dx.
	\end{equation}
	From Proposition \ref{prop:Kequiv}, we have that $\HO{\Omega}{K}=H^1(\Omega)$ and then \eqref{eq:cont_cap_2} yields $W=V_{K,f}=0$; on the other hand, from \eqref{eq:cont_cap_3} and \eqref{eq:cont_cap_4} it follows that
	\[
		\Capa_{\bar{\Omega}}(K_{\epsilon_n},f)\to \Capa_{\bar{\Omega},c}(K,f)=0\quad\text{as }n\to\infty.
	\]
	Urysohn's Subsequence Principle concludes the proof.
\end{proof}

The following lemma is a fundamental step in the proof of our main results.
It states that, when a sequence of sets is concentrating, as $\epsilon\to0$, at a zero capacity set, the squared $L^2(\Omega)$-norm of the associated capacitary potentials is negligible, as $\epsilon\to0$, with respect to the capacity.

\begin{lemma}\label{lemma:L^2_norm}
	Let $K\subseteq\overline{\Omega}$ be compact and let $\{K_\epsilon\}_{\epsilon>0}$ be a family of compact subsets of $\overline{\Omega}$ concentrating at $K$. If $\Capa_{\bar{\Omega}}(K)=0$, then
	\[
		\int_{\Omega}\abs{V_{K_\epsilon,f}}^2\dx=o(\Capa_{\bar{\Omega},c}(K_\epsilon,f))\quad\text{as }\epsilon\to0
	\]
	for all $f\in H^1(\Omega)$ and all $c\in L^\infty(\Omega)$ satisfying \eqref{eq:c_assumption}.
\end{lemma}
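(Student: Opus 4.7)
The plan is to argue by contradiction and exploit the fact that $\HO{\Omega}{K}=H^1(\Omega)$ under the zero-capacity assumption, in combination with compactness of the embedding $H^1(\Omega)\hookrightarrow L^2(\Omega)$.

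Suppose the conclusion fails. Then there exist $\delta>0$ and a sequence $\epsilon_n\to 0$ such that
\[
\int_{\Omega}|V_{K_{\epsilon_n},f}|^2\dx \geq \delta\,\Capa_{\bar{\Omega},c}(K_{\epsilon_n},f).
\]
In particular $\Capa_{\bar{\Omega},c}(K_{\epsilon_n},f)>0$ (otherwise $V_{K_{\epsilon_n},f}=0$ by Remark \ref{rmk:zero_cap_equiv}, contradicting the inequality unless both sides vanish, which is vacuous). I would then set
\[
w_n:=\frac{V_{K_{\epsilon_n},f}}{\sqrt{\Capa_{\bar{\Omega},c}(K_{\epsilon_n},f)}},
\]
so that $q(w_n)=1$ by Proposition \ref{prop:def_V_K_f}. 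Since $q$ is equivalent to the standard $H^1$-norm, $\{w_n\}$ is bounded in $H^1(\Omega)$; passing to a subsequence (not relabelled) and using Rellich--Kondrachov, I obtain $w_n\rightharpoonup w$ weakly in $H^1(\Omega)$ and $w_n\to w$ strongly in $L^2(\Omega)$, for some $w\in H^1(\Omega)$.

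Next I would identify the limit equation. By \eqref{eq:weak_V_K_f}, for every $\phi\in \HO{\Omega}{K_{\epsilon_n}}$ one has
\[
\int_{\Omega}(\nabla w_n\cdot\nabla \phi+c\,w_n \phi)\dx=0.
\]
Fix any $\phi\in C_c^\infty(\overline{\Omega}\setminus K)$. By the concentration hypothesis (Remark \ref{lemma:concentr_propert}), $\phi\in C_c^\infty(\overline{\Omega}\setminus K_{\epsilon_n})\subset \HO{\Omega}{K_{\epsilon_n}}$ for $n$ large. Passing to the limit using the weak convergence gives
\[
\int_{\Omega}(\nabla w\cdot\nabla \phi+c\,w\,\phi)\dx=0 \quad\text{for all }\phi\in C_c^\infty(\overline{\Omega}\setminus K),
\]
and hence, by density, for every $\phi\in \HO{\Omega}{K}$.

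Here is where the zero-capacity hypothesis enters decisively: by Proposition \ref{prop:Kequiv}, $\HO{\Omega}{K}=H^1(\Omega)$, so the above identity holds for all $\phi\in H^1(\Omega)$. Choosing $\phi=w$ gives $q(w)=0$, and the coercivity guaranteed by \eqref{eq:c_assumption} forces $w\equiv 0$. But then $\|w_n\|_{L^2(\Omega)}^2\to 0$, contradicting $\|w_n\|_{L^2(\Omega)}^2\geq \delta$ coming from the contradiction hypothesis. Hence no such $\delta$ and sequence can exist, and the lemma follows. The main point to watch out for is the justification that the concentration property allows any fixed test function supported away from $K$ to eventually lie in every $\HO{\Omega}{K_{\epsilon_n}}$; this is precisely the content of Remark \ref{lemma:concentr_propert}, so the argument is essentially routine once one sets up the correct rescaling.
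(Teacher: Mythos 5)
Your proof is correct and essentially identical to the paper's. The only cosmetic difference is the choice of normalization: you rescale so that $q(w_n)=1$ and deduce $\|w_n\|_{L^2}^2\ge\delta$ from the contradiction hypothesis, while the paper rescales so that $\|W_n\|_{L^2}=1$ and deduces $q(W_n)\le C$; both lead to the same compactness and the same use of $\HO{\Omega}{K}=H^1(\Omega)$ to force the weak limit to vanish, yielding the contradiction.
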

\begin{proof}
	Assume by contradiction that, for a certain $f\in H^1(\Omega)$, there exists $\epsilon_n\to 0$ and $C>0$ such that
	\[
		\int_{\Omega}\abs{V_{K_{\epsilon_n},f}}^2\dx\geq \frac{1}{C}\Capa_{\bar{\Omega},c}(K_{\epsilon_n},f).
	\]
	Let
	\[
		W_n:=\frac{V_{K_{\epsilon_n},f}}{\norm{V_{K_{\epsilon_n},f}}}_{L^2(\Omega)}.
	\]
	Then $\norm{W_n}_{L^2(\Omega)}=1$ and
	\[
		\norm{\nabla W_n}_{L^2(\Omega)}^2
+ \int_\Omega cW_n^2\dx
=\frac{\Capa_{\bar{\Omega},c}(K_{\epsilon_n},f)}{\norm{V_{K_{\epsilon_n},f}}_{L^2(\Omega)}^2}\leq C.
	\]
	Hence $\{W_n\}_n$ is bounded in $H^1(\Omega)$ and so there exists $W\in H^1(\Omega)$ such that $W_n\rightharpoonup W$ weakly in $H^1(\Omega)$, up to a subsequence, as $n\to\infty$. By compactness of the embedding $H^1(\Omega)\hookrightarrow L^2(\Omega)$ we have that $\norm{W}_{L^2(\Omega)}=1$. 
	Using Remark \ref{lemma:concentr_propert}, we can pass to the limit in the equation satisfied by $W_n$ and then obtain
	\[
		\int_{\Omega}(\nabla W\cdot\nabla \phi+cW\phi)\dx=0\quad\text{for all }\phi\in C_c^\infty(\overline{\Omega}\setminus K).
	\]
	On the other hand, since $\Capa_{\bar{\Omega}}(K)=0$, in view of 
Proposition \ref{prop:Kequiv} $C_c^\infty(\overline{\Omega}\setminus K)$ is dense in $H^1(\Omega)$ and so $W=0$, thus a contradiction arises.
\end{proof}

We are now going to prove that the term $\mathrm{cap}_{\overline{\R^N_+}}(K,\psi_\gamma)$, appearing in the expansion stated in Corollary \ref{cor:asy}, 
is nonzero whenever 
$K\sub\partial\R^N_+$ is a compact set such that $\mathrm{cap}_{\overline{\R^N_+}}(K)>0$; to this aim we prove
a more general lemma concerning the standard (Newtonian) capacity of a set, whose definition we recall below. For any open set $U\sub\R^N$, we denote by $\mathcal{D}^{1,2}(U)$ the completion of $C_c^\infty(U)$ with respect to the $L^2(U)$-norm of the gradient.

\begin{definition}\label{def:newt_capa}
	For $N\geq3$,  let $K\sub\R^N$  be a compact set and let $\eta_K\in C_c^\infty(\R^N)$ be such that $\eta_K=1$ in a neighbourhood of $K$.
If $f\in H^1_{\rm loc}(\R^N)$,  the following quantity 
	\[
		\capa_{\R^N}(K,f)=\inf\left\{\int_{\R^N}\abs{\nabla u}^2\dx: u\in \mathcal{D}^{1,2}(\R^N),~u-f\eta_K\in\mathcal{D}^{1,2}(\R^N\setminus K)  \right\}
	\]
is called the \emph{$f$-capacity} of $K$.
	For $f=1$,  $\capa_{\R^N}(K):=\capa_{\R^N}(K,1)$ is called the \emph{capacity} of $K$ (as already introduced in Remark \ref{rmk:capa_std}).
\end{definition}

\begin{lemma}\label{lemma:capa_pos}
	Let $N\geq3$ and $K\sub \R^N$ be a compact set such that $\mathrm{cap}_{\R^N}(K)>0$. Let 
$f\in~\!\!C^\infty(\R^N)$ and let $Z_f:=\{x\in\R^N\colon f(x)=0 \}$. If $\mathrm{cap}_{\R^N}(Z_f\cap K)<\mathrm{cap}_{\R^N}(K)$, then 
	\[
	\mathrm{cap}_{\R^N}(K,f)>0.
	\]
\end{lemma}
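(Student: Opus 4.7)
The plan is to argue by contradiction: suppose $\mathrm{cap}_{\R^N}(K,f)=0$, and derive that $K\setminus Z_f$ must have zero capacity, which together with subadditivity of the Newtonian capacity would force $\mathrm{cap}_{\R^N}(K)\leq \mathrm{cap}_{\R^N}(K\cap Z_f)$, contradicting the hypothesis.

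First I would observe that, exactly as in Proposition \ref{prop:def_V_K_f}, the infimum defining $\mathrm{cap}_{\R^N}(K,f)$ is attained by a unique $V\in \mathcal{D}^{1,2}(\R^N)$ with $V-f\eta_K\in \mathcal{D}^{1,2}(\R^N\setminus K)$; the Dirichlet integral of $V$ being zero, $V\equiv 0$, so that $f\eta_K\in\mathcal{D}^{1,2}(\R^N\setminus K)$. The key ingredient is now the classical fact (see, e.g., \cite{Maly1997}) that any function in $\mathcal{D}^{1,2}(\R^N\setminus K)$, regarded as an element of $\mathcal{D}^{1,2}(\R^N)$ by trivial extension, admits a quasi-continuous representative which vanishes quasi-everywhere on $K$ (that is, except on a set of zero Newtonian capacity). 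Since $f\eta_K$ is continuous and equals the continuous function $f$ on an open neighborhood of $K$, its quasi-continuous representative coincides with $f$ on $K$, and therefore $f=0$ quasi-everywhere on $K$. In other words, $K\setminus Z_f$ has zero Newtonian capacity.

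At this point I would invoke the countable subadditivity of the Newtonian capacity (see \cite[Chapter~2.1]{Maly1997}), which, applied to the decomposition $K=(K\cap Z_f)\cup(K\setminus Z_f)$, yields
\begin{equation*}
  \mathrm{cap}_{\R^N}(K)\leq \mathrm{cap}_{\R^N}(K\cap Z_f)+\mathrm{cap}_{\R^N}(K\setminus Z_f)=\mathrm{cap}_{\R^N}(K\cap Z_f),
\end{equation*}
contradicting the standing hypothesis $\mathrm{cap}_{\R^N}(K\cap Z_f)<\mathrm{cap}_{\R^N}(K)$, and completing the proof.

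The main technical obstacle is the passage from ``$f\eta_K\in \mathcal{D}^{1,2}(\R^N\setminus K)$'' to ``$f=0$ quasi-everywhere on $K$''. This requires the theory of quasi-continuous representatives of Sobolev functions and the characterization of $\mathcal{D}^{1,2}(\R^N\setminus K)$ as those Sobolev functions whose quasi-continuous representative vanishes q.e.\ on $K$; I would cite \cite{Maly1997} for these facts rather than reproving them. Everything else (attainment of the infimum, subadditivity) is either a direct adaptation of Proposition \ref{prop:def_V_K_f} or a standard property of capacity.
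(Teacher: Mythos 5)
Your proposal is correct, but it takes a genuinely different route from the paper's proof. The paper argues directly: it exhausts $K$ by compacts $K_n := K\setminus \mathcal{U}_n$ where $\mathcal{U}_n$ are shrinking neighbourhoods of $Z_f\cap K$, so that $\capa_{\R^N}(K_n)>0$ for large $n$ (by subadditivity and monotonicity applied to $K\subseteq K_n\cup\overline{\mathcal{U}}_n$) and $|f|\geq c_n>0$ on $K_n$; then a truncation argument shows $\capa_{\R^N}(K_n,|f|)\geq c_n^2\,\capa_{\R^N}(K_n)>0$, monotonicity gives $\capa_{\R^N}(K,|f|)\geq\capa_{\R^N}(K_n,|f|)$, and a reflection argument using $|\xi|$ as competitor gives $\capa_{\R^N}(K,f)\geq\capa_{\R^N}(K,|f|)$. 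Your argument is by contradiction and invokes the deeper potential-theoretic fact that elements of $\mathcal{D}^{1,2}(\R^N\setminus K)$ have a quasi-continuous representative vanishing quasi-everywhere on $K$; from $\capa_{\R^N}(K,f)=0$ you deduce $f\eta_K\in\mathcal{D}^{1,2}(\R^N\setminus K)$, hence $f=0$ q.e. on $K$, hence $\capa_{\R^N}(K\setminus Z_f)=0$, and subadditivity applied to $K=(K\cap Z_f)\cup(K\setminus Z_f)$ gives the contradiction. This is slicker and more conceptual, and cleanly explains \emph{why} the conclusion should hold. The paper's version has the virtue of staying entirely elementary and entirely within compact sets and minimizing energies; your version needs the extension of the Newtonian capacity to arbitrary (non-compact) sets (since $K\setminus Z_f$ need not be compact), together with the q.c.~representative machinery, both of which are standard and available in \cite{Maly1997} but are a heavier import than the paper's truncation lemma. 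One small remark: you could streamline the first step by not invoking attainment of the infimum at all; if $\capa_{\R^N}(K,f)=0$ then a minimizing sequence $u_n\to 0$ in $\mathcal{D}^{1,2}(\R^N)$, and since $\mathcal{D}^{1,2}(\R^N\setminus K)$ is closed and $u_n-f\eta_K\in\mathcal{D}^{1,2}(\R^N\setminus K)$, one gets $f\eta_K\in\mathcal{D}^{1,2}(\R^N\setminus K)$ directly.
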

\begin{proof}
	In this proof we make use of some properties of the classical Newtonian capacity of a set and we refer to \cite[Chapter 2]{Maly1997} for the details. Let us consider a sequence of  bounded open sets $\mathcal{U}_n\sub\R^N$ such that
	\begin{equation*}
		 Z_f\cap K\sub \mathcal{U}_{n+1}\sub \mathcal{U}_n,\quad\text{for all }n\text{ and}\quad
		\bigcap_{n\geq 1}\mathcal{\overline{U}}_n=Z_f\cap K.
	\end{equation*}
	Let $K_n:=K\setminus \mathcal{U}_n$. 
Since $K\subseteq K_n\cup \overline {\mathcal U_n}$, 
by subaddittivity and monotonicity of the capacity
	\[
		\capa_{\R^N}(K_n)\geq  \capa_{\R^N}(K)-\capa_{\R^N}(\mathcal{\overline{U}}_n).
	\]
	Moreover, since $\cap_{n\geq 1}\mathcal{\overline{U}}_n=Z_f\cap K$, then $\capa_{\R^N}(Z_f\cap K)=\lim_{n\to\infty}\capa_{\R^N} (\mathcal{\overline{U}}_n)$, and so \begin{equation}\label{eq:capa_f_pos_1}
		\capa_{\R^N}(K_n)>0
	\end{equation}
	for large $n$, by the assumption  $\capa_{\R^N}(K)-\capa_{\R^N}(Z_f\cap K)>0$. Now we claim that 
	\begin{equation}\label{eq:capa_f_pos_2}
		\capa_{\R^N}(K,|f|)>0.
	\end{equation}
	Let us fix a sufficiently large $n$ in order for \eqref{eq:capa_f_pos_1} to hold and let us set
	\begin{equation*}
		c_n:=\frac{1}{2}\inf_{K_n}|f|=\frac{1}{2}\min_{K_n}|f|>0.
	\end{equation*}
	By definition of $K_n$ we have $|f|\geq 2c_n>c_n$ on $K_n$ and therefore, by continuity, $|f|>c_n$ in an open neighbourhood of $K_n$. Let $\eta_K\in C_c^\infty(\R^N)$ be such that $\eta_K=1$ in a neighbourhood of $K$ and let $u_n\in \mathcal{D}^{1,2}(\R^N)$ be an arbitrary function such that $u_n-\eta_K|f|\in \mathcal{D}^{1,2}(\R^N\setminus K_n)$. We define
	\[
		v_n:=\min\{1,u_n/c_n \}\in\mathcal{D}^{1,2}(\R^N).
	\]
	We have that $v_n-\eta_{K_n}\in\mathcal{D}^{1,2}(\R^N\setminus K_n)$, where $\eta_{K_n}\in C_c^\infty(\R^N)$ is equal to $1$ in a neighbourhood of $K_n$. Therefore $v_n$ is an admissible competitor for $\capa_{\R^N}(K_n)$ and also, by truncation, the energy of $v_n$ is lower than the energy of $u_n/c_n$. Hence
	\begin{equation*}
		\capa_{\R^N}(K_n)\leq \int_{\R^N}\abs{\nabla v_n}^2\dx\leq \int_{\R^N}\frac{\abs{\nabla u_n}}{c_n^2}\dx.
	\end{equation*}
	By arbitrariness of $u_n$, we have that $\capa_{\R^N}(K_n,|f|)\geq c_n^2\capa_{\R^N}(K_n)>0$. Moreover, by monotonicity, $\capa_{\R^N}(K,|f|)\geq \capa_{\R^N}(K_n,|f|)>0$ and so \eqref{eq:capa_f_pos_2} is proved. Finally, we claim that 
	\begin{equation}\label{eq:capa_f_pos_3}
		\capa_{\R^N}(K,f)\geq\capa_{\R^N}(K,|f|).
	\end{equation}
	Indeed, if $\xi\in\mathcal{D}^{1,2}(\R^N)$ is such that $\xi-\eta_K f\in\mathcal{D}^{1,2}(\R^N\setminus K)$, then $\abs{\xi}-\eta_K|f|\in\mathcal{D}^{1,2}(\R^N\setminus K)$. Hence
	\[
		\capa_{\R^N}(K,|f|)\leq \int_{\R^N}|\nabla |\xi||^2\dx=\int_{\R^N}|\nabla\xi|^2\dx
	\]
	for all $\xi\in \mathcal{D}^{1,2}(\R^N)$ such that $\xi-\eta_K f\in\mathcal{D}^{1,2}(\R^N\setminus K)$, which implies \eqref{eq:capa_f_pos_3}. Combining \eqref{eq:capa_f_pos_2} and \eqref{eq:capa_f_pos_3} we can conclude the proof.	
\end{proof}

As an application of the previous lemma, we  obtain the following result.

\begin{proposition}\label{prop:capa_pos_1}
	For $N\geq3$, let $K\sub\partial\R^N_+$ be a compact set such that $\mathrm{cap}_{\overline{\R^N_+}}(K)>0$ and let $\psi_\gamma$ be as in \eqref{eq:psi_gamma}. Then
	\[
	\mathrm{cap}_{\overline{\R^N_+}}(K,\psi_\gamma)>0.
	\]
\end{proposition}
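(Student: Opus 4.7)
The plan is to reduce Proposition \ref{prop:capa_pos_1} to the full-space estimate of Lemma \ref{lemma:capa_pos} via an even reflection across $\partial\R^N_+$. Since $\Psi$ satisfies the homogeneous Neumann condition on $\partial\mathbb{S}^{N-1}_+$, the even reflection in the $x_N$ variable extends $\psi_\gamma$ to a function $\tilde\psi_\gamma$ that is $C^1$ across $\{x_N=0\}$ and harmonic on each half-space, hence—being a homogeneous polynomial of degree $\gamma$ on $\R^N_+$—a harmonic polynomial of degree $\gamma$ on the whole $\R^N$, even in $x_N$.

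I would next establish the identity
\[
\mathrm{cap}_{\overline{\R^N_+}}(K,\psi_\gamma)=\tfrac{1}{2}\,\mathrm{cap}_{\R^N}(K,\tilde\psi_\gamma),
\]
which generalizes Remark \ref{rmk:capa_std} from the case $f\equiv 1$ to the $f$-capacity. The inequality ``$\leq$'' is obtained by extending a half-space competitor by even reflection: the extension is still an admissible full-space competitor (because $K\subset\{x_N=0\}$ is fixed under the reflection and $\tilde\psi_\gamma$ is the even reflection of $\psi_\gamma$), and its Dirichlet energy is exactly twice that of the original. The converse is obtained by symmetrization: replacing a full-space competitor $u$ by its even part $\frac12(u(x',x_N)+u(x',-x_N))$ does not increase the Dirichlet energy (parallelogram identity) and preserves admissibility provided the cutoff $\eta_K$ is chosen even; the restriction of the symmetrized competitor to $\R^N_+$ is then a half-space competitor of energy exactly half the full-space one.

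Given this reduction, it suffices to prove $\mathrm{cap}_{\R^N}(K,\tilde\psi_\gamma)>0$, and I would apply Lemma \ref{lemma:capa_pos} with $f=\tilde\psi_\gamma\in C^\infty(\R^N)$. The hypothesis $\mathrm{cap}_{\R^N}(K)>0$ is immediate from Remark \ref{rmk:capa_std} and the assumption on $\mathrm{cap}_{\overline{\R^N_+}}(K)$, so the only nontrivial point is the strict inequality
\[
\mathrm{cap}_{\R^N}(Z_{\tilde\psi_\gamma}\cap K)<\mathrm{cap}_{\R^N}(K).
\]
By Remark \ref{rmk:Psi}, the restriction of $\Psi$ to $\partial\mathbb{S}^{N-1}_+$ is not identically zero, hence $\tilde\psi_\gamma|_{\partial\R^N_+}$ is a nontrivial homogeneous polynomial on $\R^{N-1}$ and the intersection $Z_{\tilde\psi_\gamma}\cap\partial\R^N_+$ is a proper real-algebraic subvariety of $\R^{N-1}$, of codimension at least one and in particular of vanishing $(N-1)$-dimensional Lebesgue measure.

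The main obstacle, corresponding precisely to the authors' observation that the result ``strongly relies on the position of the nodal set of $\psi_\gamma$ with respect to $K$'', is turning the smallness of $Z_{\tilde\psi_\gamma}\cap K$ into a strict capacity inequality. The natural route is by contradiction: if equality held, then by subadditivity and monotonicity of Newtonian capacity the set $K\setminus Z_{\tilde\psi_\gamma}$ would have zero Newtonian capacity in $\R^N$, and so $K$ would be contained, up to a Newtonian-capacity-zero set, in the proper algebraic subvariety $Z_{\tilde\psi_\gamma}\cap\partial\R^N_+$ of $\R^{N-1}$; combining this with Remark \ref{rmk:capa_std} (which translates $\mathrm{cap}_{\overline{\R^N_+}}(K)>0$ into the positivity of the Gagliardo $\frac12$-fractional capacity of $K$ in $\R^{N-1}$) and with the zero $(N-1)$-Lebesgue measure of the nodal variety, one would reach a contradiction via Proposition \ref{prop:measure_capa_1}. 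Carefully executing this interplay between Newtonian $\R^N$-capacity, fractional $\R^{N-1}$-capacity, and the real-algebraic structure of $Z_{\tilde\psi_\gamma}\cap\partial\R^N_+$ is the genuinely technical part of the proof.
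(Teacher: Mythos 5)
Your reduction via even reflection to $\mathrm{cap}_{\R^N}(K,\tilde\psi_\gamma)$ and then to Lemma \ref{lemma:capa_pos} is exactly the route the paper takes (the paper's phrase ``by applying again Remark \ref{rmk:capa_std}'' is precisely your $f$-capacity extension of the $\tfrac12$-identity, and your symmetrization argument is a clean justification of it). The genuine gap is in the last step, where you declare the strict inequality $\mathrm{cap}_{\R^N}(Z_{\tilde\psi_\gamma}\cap K)<\mathrm{cap}_{\R^N}(K)$ to be ``the genuinely technical part'' and propose a contradiction argument. That argument is broken in two places. First, from $\capa_{\R^N}(Z_{\tilde\psi_\gamma}\cap K)=\capa_{\R^N}(K)$ one cannot deduce $\capa_{\R^N}(K\setminus Z_{\tilde\psi_\gamma})=0$: subadditivity gives $\capa(K)\le\capa(Z\cap K)+\capa(K\setminus Z)$ and monotonicity gives $\capa(Z\cap K)\le\capa(K)$, and neither inequality (nor their combination) yields the claimed implication. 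Second, even granting that $K$ lies, up to a null set, in a set of zero $(N-1)$-Lebesgue measure, Proposition \ref{prop:measure_capa_1} is of no help: it reads $(|K|_{N-1})^{(N-2)/(N-1)}\le C\,\mathrm{cap}_{\overline{\R^N_+}}(K)$, a lower bound for the capacity in terms of measure, so small measure says nothing about the capacity being small. (Indeed zero Lebesgue measure does not imply zero capacity in general.)

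What makes the step easy, and what the paper does, is to observe that $Z_{\tilde\psi_\gamma}\cap K$ is \emph{not merely} small in measure but actually has zero Newtonian capacity in $\R^N$ outright. Since $K\sub\partial\R^N_+$ and, by Remark \ref{rmk:Psi}, $\tilde\psi_\gamma|_{\partial\R^N_+}$ is a nontrivial polynomial, $Z_{\tilde\psi_\gamma}\cap K$ is contained in a proper real-algebraic subvariety of $\R^{N-1}$, hence has Hausdorff dimension at most $N-2$; and sets of finite (or $\sigma$-finite) $\mathcal{H}^{N-2}$ measure have zero Newtonian capacity in $\R^N$ (this is the cited \cite[Theorem~2.52]{Maly1997}). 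Thus $\capa_{\R^N}(Z_{\tilde\psi_\gamma}\cap K)=0<\capa_{\R^N}(K)$, the hypotheses of Lemma \ref{lemma:capa_pos} hold, and you are done. You had all the ingredients (the codimension-one observation about the nodal variety), but reached for measure when you should have reached for dimension versus capacity.
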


\begin{proof}
	Let $Z_{\psi_\gamma}=\{x\in\R^N\colon \psi_\gamma(x)=0 \}$ as in the statement of Lemma \ref{lemma:capa_pos}.
	We notice that $\capa_{\R^N}(K)=2 \mathrm{cap}_{\overline{\R}^N_+}(K)>0$ (see Remark \ref{rmk:capa_std}), so that the first assumption of Lemma \ref{lemma:capa_pos} holds.
	Concerning the second assumption, we have that $\capa_{\R^N}(Z_{\psi_\gamma}\cap K)=2 \mathrm{cap}_{\overline{\R^N_+}}(Z_{\psi_\gamma}\cap K)=0$, since the set $ Z_{\psi_\gamma}\cap K$ is $(N-2)$-dimensional, in view of Remark \ref{rmk:Psi}, and $(N-2)$-dimensional sets have zero capacity in $\R^N$, see e.g. \cite[Theorem 2.52]{Maly1997}. Then Lemma \ref{lemma:capa_pos} provides $\capa_{\R^N}(K,\psi_\gamma)>0$ and the proof follows by applying again Remark \ref{rmk:capa_std}.
\end{proof}

We conclude this section with the following lower bound of 
$\mathrm{cap}_{\overline{\R^N_+}}(K)$ in terms of its $N-1$ dimensional Lebesgue measure.

\begin{proposition}\label{prop:measure_capa_1}
	Let $N\geq3$ and $K\sub\partial\R^N_+$ be compact. Then there exists a constant $C>0$ (only depending on $N$) such that
	\[
	(|K|_{N-1})^{\frac{N-2}{N-1}}\leq C\, \mathrm{cap}_{\overline{\R^N_+}}(K),
	\]
	where $|\cdot|_{N-1}$ denotes the $N-1$ dimensional Lebesgue measure.
\end{proposition}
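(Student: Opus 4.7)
The plan is to bound $|K|_{N-1}^{(N-2)/(N-1)}$ using the Sobolev trace embedding from the half-space into its boundary. The key observation is that for any admissible competitor $u$ in the variational definition of $\mathrm{cap}_{\overline{\R^N_+}}(K)$, the trace of $u$ on $\partial\R^N_+ = \R^{N-1}$ equals $1$ almost everywhere on $K$, and the trace inequality then controls $\|\mathrm{tr}(u)\|_{L^q(\R^{N-1})}$ by $\|\nabla u\|_{L^2(\R^N_+)}$ with the critical Sobolev exponent $q = 2(N-1)/(N-2)$.

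More precisely, I would first recall (or establish by even reflection of $C_c^\infty(\overline{\R^N_+})$ functions across $\partial\R^N_+$, followed by the Sobolev inequality in $\R^N$) the trace Sobolev inequality
\[
\|u\|_{L^{\frac{2(N-1)}{N-2}}(\R^{N-1})} \leq C_N \|\nabla u\|_{L^2(\R^N_+)}
\quad \text{for all } u \in \mathcal{D}^{1,2}(\overline{\R^N_+}),
\]
where $C_N$ depends only on $N \geq 3$ and the trace is extended by density. Next, pick any $\eta_K \in C_c^\infty(\overline{\R^N_+})$ with $\eta_K \equiv 1$ in a neighbourhood of $K$, and let $u \in \mathcal{D}^{1,2}(\overline{\R^N_+})$ be any function satisfying $u - \eta_K \in \mathcal{D}^{1,2}(\overline{\R^N_+} \setminus K)$. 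Since $K \subset \partial\R^N_+$, approximating $u - \eta_K$ by functions in $C_c^\infty(\overline{\R^N_+}\setminus K)$ and passing to the limit in the trace shows that $\mathrm{tr}(u) = 1$ almost everywhere on $K$.

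Consequently,
\[
|K|_{N-1}^{\frac{N-2}{2(N-1)}} = \left(\int_K 1 \, d\mathcal{H}^{N-1}\right)^{\frac{N-2}{2(N-1)}} \leq \|\mathrm{tr}(u)\|_{L^{\frac{2(N-1)}{N-2}}(\R^{N-1})} \leq C_N \|\nabla u\|_{L^2(\R^N_+)}.
\]
Squaring and taking the infimum over all admissible $u$ yields
\[
|K|_{N-1}^{\frac{N-2}{N-1}} \leq C_N^2 \, \mathrm{cap}_{\overline{\R^N_+}}(K),
\]
which is the desired inequality with $C = C_N^2$.

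The main technical point is the justification of the trace identity $\mathrm{tr}(u) = 1$ on $K$ for competitors $u$ in the capacity problem; this is essentially bookkeeping about how the trace operator extends continuously to $\mathcal{D}^{1,2}(\overline{\R^N_+})$ and behaves on the subspace $\mathcal{D}^{1,2}(\overline{\R^N_+}\setminus K)$, but since $K$ lies on the boundary, no subtle interior/boundary distinction arises and this step is straightforward via density. Everything else is a direct application of the critical Sobolev trace inequality.
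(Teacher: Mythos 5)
Your proof is correct and uses the same key ingredient as the paper, namely the trace Sobolev embedding $\mathcal{D}^{1,2}(\overline{\R^N_+})\hookrightarrow L^{\frac{2(N-1)}{N-2}}(\R^{N-1})$. The paper avoids the trace-identity argument for general competitors by instead approximating the capacity with smooth functions $u\in C_c^\infty(\overline{\R^N_+})$ that equal $1$ on an open neighbourhood $U$ of $K$, so that $|K|_{N-1}\leq|U|_{N-1}$ is immediate; but the two arguments are essentially the same.
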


\begin{proof}
	By definition of $\mathcal{D}^{1,2}(\overline{\R^N_+})$ and $\mathrm{cap}_{\overline{\R^N_+}}(K)$, for every $\epsilon>0$ there exists $u\in C_c^\infty(\overline{\R^N_+})$ such that $u=1$ in an open neighbourhood $U$ of $K$ and
	\[
		\int_{\R^N_+}\abs{\nabla u}^2\dx\leq \mathrm{cap}_{\overline{\R^N_+}}(K)+\epsilon.
	\]
	On the other hand
	\[
		|K|_{N-1}\leq |U|_{N-1}=\int_U\abs{u}^{\frac{2(N-1)}{N-2}}\ds\leq \int_{\R^{N-1}}\abs{u}^{\frac{2(N-1)}{N-2}}\ds.
	\]
	By combining the two previous inequalities with the embedding $\mathcal{D}^{1,2}(\overline{\R^N_+})\hookrightarrow L^{\frac{2(N-1)}{N-2}}(\R^{N-1})$ we can conclude the proof.
\end{proof}

\section{Continuity of the eigenvalues with respect to the capacity}\label{sec:continuity}

5The aim of this section is to prove continuity of the eigenvalues $\lambda_n(\Omega;K)$, in the limit as $\Capa_{\bar{\Omega}}(K)\to 0$.

\begin{proof}[Proof of Theorem \ref{thm:cont_eigen}]
If $\Capa_{\bar{\Omega}}(K)=0$ the conclusion follows obviously from Proposition \ref{prop:Kequiv}. 
Let us assume that $\Capa_{\bar{\Omega}}(K)>0$. Then, by definition of $\Capa_{\bar{\Omega}}(K)$, there exists $v\in 
C^\infty(\overline{\Omega})$ such that $v-1\in C^\infty_{\rm c}(\overline{\Omega}\setminus K)$ 
and $\|v\|_{H^1(\Omega)}^2\leq 2 \Capa_{\bar{\Omega}}(K)$. Letting $w=(1-(1-v)^+)^+$, we have that $w\in W^{1,\infty}(\Omega)$, $0\leq w\leq1$ a.e. in $\Omega$, $w-1\in \HO{\Omega}{K}$, and $\|w\|_{H^1(\Omega)}^2\leq \|v\|_{H^1(\Omega)}^2\leq 2 \Capa_{\bar{\Omega}}(K)$.

Let $\phi_1,\dots,\phi_n$ be the eigenfunctions corresponding to $\lambda_1,\dots,\lambda_n$ and let 
$\Phi_i:=\phi_i(1-w)$, $i=1,\dots,n$. It's easy to prove that $\Phi_i\in \HO{\Omega}{K}$ for all $i=1,\dots,n$. Let us consider the  linear subspace of $\HO{\Omega}{K}$
\[
E_n:=\Span\{\Phi_1,\dots,\Phi_n\}.
\]
We claim that,  if $\Capa_{\bar{\Omega}}(K)$ is sufficiently small, $\{\Phi_i\}_{i=1}^n$ is linearly independent, thus implying that $\dim E_n=n$.
In order to compute $q(\Phi_i,\Phi_j)$, we test the equation satisfied by $\phi_i$ with $\phi_j(1-w)^2$. It follows that 
\[
	\int_{\Omega}[(1-w)^2\nabla\phi_i\cdot\nabla\phi_j +c(1-w)^2\phi_i\phi_j]\dx 
	=\int_{\Omega}[\lambda_i(1-w)^2\phi_i\phi_j+2(1-w)\phi_j\nabla\phi_i\cdot\nabla w]\dx.
\]
	Thanks to the previous identity, we are able to compute
\[
	q(\Phi_i,\Phi_j)=\int_{\Omega}[\phi_j(1-w)\nabla\phi_i\cdot\nabla w-\phi_i(1-w)\nabla\phi_j\cdot\nabla w
	+\phi_i\phi_j\abs{\nabla w}^2+\lambda_i\phi_i\phi_j(1-w)^2 ]\dx.
\]
From classical elliptic regularity theory (see e.g. \cite[Proposition 5.3]{stampacchia}) it is well-known that $\phi_i\in L^\infty(\Omega)$. Then, thanks also to H\"older inequality and \eqref{eq:c_assumption}, we have that	
	\[
	\abs{q(\Phi_i,\Phi_j)-\delta_{ij}\lambda_i}\leq C_1[(\Capa_{\bar{\Omega}}(K))^{1/2}+\Capa_{\bar{\Omega}}(K)],
	\]
	for a certain $C_1>0$ (depending only on $\|\varphi_i\|_{L^\infty(\Omega)}$ and $\lambda_i$, $i=1,\dots,n$), where $\delta_{ij}$ is the \emph{Kronecker's Delta}. The above inequality implies that 
	\[
		q(\Phi_i,\Phi_j)=\delta_{ij}\lambda_i+O((\Capa_{\bar{\Omega}}(K))^{1/2})\quad\text{as }\Capa_{\bar{\Omega}}(K)\to 0,
	\] 
hence there exists $\delta>0$ such that, if $\Capa_{\bar{\Omega}}(K)<\delta$, then $\Phi_1,\dots,\Phi_{n}$ are linearly independent. Let us now compute the $L^2$ scalar products
	\begin{equation*}
		\int_{\Omega}\Phi_i\Phi_j\dx=\int_{\Omega}\phi_i\phi_j(1-w)^2\dx 
		=\delta_{ij}-2\int_{\Omega}\phi_i\phi_j w\dx+\int_{\Omega} \phi_i\phi_j w^2\dx.
	\end{equation*}
	Arguing as before, by H\"older inequality we obtain that 	
	\begin{align*}
		\abs{\int_{\Omega}\Phi_i\Phi_j\dx -\delta_{ij}}
		&\leq 2\sqrt{2}\norm{\phi_i\phi_j}_{L^2(\Omega)}(\Capa_{\bar{\Omega}}(K))^{1/2}+2\norm{\phi_i\phi_j}_{L^\infty(\Omega)}\Capa_{\bar{\Omega}}(K) \\
		&\leq C_2[(\Capa_{\bar{\Omega}}(K))^{1/2}+\Capa_{\bar{\Omega}}(K)],
	\end{align*}
	for a certain $C_2>0$ (depending only on $\|\varphi_i\|_{L^\infty(\Omega)}$, $i=1,\dots,n$), i.e. 
	\[
		\int_{\Omega}\Phi_i\Phi_j\dx=\delta_{ij}+O((\Capa_{\bar{\Omega}}(K))^{1/2}) \quad\text{as }\Capa_{\bar{\Omega}}(K)\to 0.
	\]
	Now, from the min-max characterization \eqref{eq:minmax}, we have that
	\begin{align*}
		\lambda_n(\Omega;K)&\leq \max_{\substack{\alpha_1,\dots,\alpha_{n}\in\R \\ \sum_{i=1}^{n}\alpha_i^2=1}}\frac{ q\left(\sum_{i=1}^{n}\alpha_i \Phi_i \right)}{\sum_{i,j=1}^{n}\alpha_i\alpha_j\int_{\Omega}\Phi_i\Phi_j\dx} \\
		&=\max_{\substack{\alpha_1,\dots,\alpha_{n}\in\R \\ \sum_{i=1}^{n}\alpha_i^2=1}}\frac{\sum_{i,j=1}^{n}\alpha_i\alpha_j q(\Phi_i,\Phi_j)}{\sum_{i,j=1}^{n}\alpha_i\alpha_j(\delta_{ij}+O((\Capa_{\bar{\Omega}}(K))^{1/2}))} \\
		&= \max_{\substack{\alpha_1,\dots,\alpha_{n}\in\R \\ \sum_{i=1}^{n}\alpha_i^2=1}}\frac{ \sum_{i=1}^{n}\alpha_i^2\lambda_i +O((\Capa_{\bar{\Omega}}(K))^{1/2})}{1+O((\Capa_{\bar{\Omega}}(K))^{1/2})} \\
                                   &\leq \frac{\lambda_n+O((\Capa_{\bar{\Omega}}(K))^{1/2})}{1+O((\Capa_{\bar{\Omega}}(K))^{1/2})}=
\lambda_n+O((\Capa_{\bar{\Omega}}(K))^{1/2})
	\end{align*}
 as $\Capa_{\bar{\Omega}}(K)\to 0$.
\end{proof}

\section{Sharp asymptotics of perturbed eigenvalues}\label{sec:sharp_asym}

This section is devoted to the proof of Theorem \ref{thm:sharp_asymp}. To this aim, let us give a preliminary lemma concerning the inverse of the operator $-\Delta+c$, when it acts on functions that vanish on a compact set.

\begin{lemma}\label{lemma:A_K}
For $K\subseteq\overline{\Omega}$ compact, let  $A_K: H^1_{0,K}(\Omega)\to H^1_{0,K}(\Omega)$  be the linear bounded operator defined by
\begin{equation}\label{eq:A_K_def}
q(A_K (u),v)=(u,v)_{L^2(\Omega)} \quad\text{for every } u,v \in H^1_{0,K}(\Omega).
\end{equation}
Then
\begin{itemize}
		\item[(i)] $A_K$ is symmetric, non-negative and compact; in particular, $0$ belongs to its spectrum~$\sigma(A_K)$.
		\item[(ii)] $\sigma(A_K)\setminus\{0\}=\{\mu_n(\Omega;K)\}_{n\in\N_*}$ and $\mu_n(\Omega;K)=1/\lambda_n(\Omega;K)$ for every $n\in\N_*$.
		\item[(iii)] For every $\mu\in\R$ and $u\in H^1_{0,K}(\Omega)\setminus\{0\}$ it holds
		\begin{equation}\label{eq:A_K_spectral_theorem}
		\left(\dist(\mu,\sigma(A_K))\right)^2 \leq \frac{q(A_K(u)-\mu u)}{q(u)}.
		\end{equation}
\end{itemize}
\end{lemma}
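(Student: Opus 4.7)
My plan is to identify $A_K$ with the inverse of the operator associated to the quadratic form $q$ on $H^1_{0,K}(\Omega)$, viewed as a Hilbert space whose inner product is $q$ itself, and then invoke the classical spectral theorem for compact self-adjoint operators to deduce all three items at once.

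\emph{Well-definedness and item (i).} I would first observe that, thanks to \eqref{eq:c_assumption}, the form $q$ is an inner product on $H^1_{0,K}(\Omega)$ equivalent to the standard $H^1$ one, so Riesz representation applied to the continuous functional $v\mapsto(u,v)_{L^2(\Omega)}$ defines $A_K u$ uniquely and makes $A_K$ bounded. Symmetry is then immediate from \eqref{eq:A_K_def} and symmetry of $(\cdot,\cdot)_{L^2(\Omega)}$; non-negativity is given by $q(A_K u, u)=\norm{u}_{L^2(\Omega)}^2\geq 0$. For compactness I plan to use the Rellich compact embedding $H^1_{0,K}(\Omega)\hookrightarrow L^2(\Omega)$: testing \eqref{eq:A_K_def} against $v=A_K u_n - A_K u_m$ and applying Cauchy--Schwarz together with the equivalence of norms yields
\[
q(A_K u_n - A_K u_m)^{1/2}\leq C\,\norm{u_n-u_m}_{L^2(\Omega)},
\]
so a bounded sequence in $H^1_{0,K}(\Omega)$ is mapped to a Cauchy sequence. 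Since $H^1_{0,K}(\Omega)$ is infinite-dimensional, compactness will force $0\in\sigma(A_K)$.

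\emph{Item (ii).} By the spectral theorem, $\sigma(A_K)\setminus\{0\}$ consists of real eigenvalues of finite multiplicity accumulating at most at $0$. I would note that $A_K u=0$ in \eqref{eq:A_K_def} forces $u=0$, so that $A_K$ is injective and $0$ is not an eigenvalue. For $\mu\neq 0$, \eqref{eq:A_K_def} shows that $A_K u=\mu u$ amounts to $q(u,v)=\frac{1}{\mu}(u,v)_{L^2(\Omega)}$ for every $v\in H^1_{0,K}(\Omega)$, i.e.\ to $u$ being an eigenfunction of \eqref{eq:weak_mixed} with eigenvalue $1/\mu$. The correspondence $\mu_n(\Omega;K)=1/\lambda_n(\Omega;K)$ will then follow by matching the ordered enumerations.

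\emph{Item (iii).} The injectivity of $A_K$ together with compactness and self-adjointness supplies a $q$-orthonormal basis $\{e_n\}_{n\in\N_*}$ of $H^1_{0,K}(\Omega)$ with $A_K e_n=\mu_n e_n$ and $\mu_n\to 0$. Expanding $u=\sum_n q(u,e_n)\,e_n$, I would then compute by Parseval
\[
q(u)=\sum_n q(u,e_n)^2,\qquad q(A_K u-\mu u)=\sum_n q(u,e_n)^2(\mu_n-\mu)^2\geq \Big(\inf_n\abs{\mu_n-\mu}\Big)^2 q(u).
\]
Since $\mu_n\to 0$ and $0\in\sigma(A_K)$, one has $\dist(\mu,\sigma(A_K))=\inf_n\abs{\mu_n-\mu}$, which yields \eqref{eq:A_K_spectral_theorem}.

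There is no substantial obstacle: the main point is simply to set up the correct Hilbert space structure on $H^1_{0,K}(\Omega)$ using $q$, and to exploit the injectivity of $A_K$ in (iii) so that the Parseval expansion covers all of $u$ without a surviving kernel component. Once this is in place, every step is a textbook consequence of the spectral theorem for compact self-adjoint operators.
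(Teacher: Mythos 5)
Your proposal is correct and takes essentially the same approach as the paper: both define $A_K$ via the Riesz isomorphism on $(H^1_{0,K}(\Omega), q)$, obtain compactness from the Rellich embedding into $L^2(\Omega)$, and then read off (ii) and (iii) from the spectral theorem for compact self-adjoint operators. The only difference is that you spell out the details (the Cauchy estimate for compactness, injectivity of $A_K$, and the Parseval computation for (iii)) which the paper delegates to references in Helffer's book, and this is a sound way to make the argument self-contained.
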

\begin{proof}
(i) $A_K$ is clearly symmetric and non-negative; let us show that it is compact. We write $A_K=\mathcal{R}\circ\mathcal{I}$, where $\mathcal{I}: H^1_{0,K}(\Omega)\to (H^1_{0,K}(\Omega))^*$ is the compact immersion
\[
\phantom{a}_{(H^1_{0,K}(\Omega))^*}\langle \mathcal{I}(u),v\rangle_{H^1_{0,K}(\Omega)} = \int_\Omega uv \dx \quad \text{for every }u,v \in H^1_{0,K}(\Omega),
\]
and $\mathcal{R}:(H^1_{0,K}(\Omega))^* \to H^1_{0,K}(\Omega)$ is the Riesz isomorphism (on $H^1_{0,K}(\Omega)$ endowed with the scalar product $q$) given by
\[
q(\mathcal{R}(F),v)=
\phantom{a}_{(H^1_{0,K}(\Omega))^*}\langle F,v\rangle_{H^1_{0,K}(\Omega)} 
\]
for every $v \in H^1_{0,K}(\Omega)$ and $F\in (H^1_{0,K}(\Omega))^*$. Then $A_K$ is compact and $0\in \sigma(A_K)$ (see for example \cite[Theorem 6.16]{Helffer2013}).

\noindent (ii) Again by \cite[Theorems 6.16]{Helffer2013}, $\sigma(A_K)\setminus\{0\}$ consists of isolated eigenvalues having finite multiplicity. Being $q(\cdot)$ a norm over $H^1_{0,K}(\Omega)$, we have that $\mu\neq 0$ is an eigenvalue of $A_K$ if and only if there exists $u\in H^1_{0,K}(\Omega)$, $u\not\equiv0$, such that
\[
q(A_K(u),v)=\mu q(u,v) \quad\text{for every } v\in H^1_{0,K}(\Omega),
\]
so that $1/\mu=\lambda_n(\Omega;K)$ for some $n\in \N_*$.

\noindent (ii) This is a consequence of the Spectral Theorem (see for example
\cite[Theorem 6.21 and Proposition 8.20]{Helffer2013}).
\end{proof}

We have now all the ingredients to give the proof of Theorem \ref{thm:sharp_asymp}. It is inspired by \cite[Theorem 1.4]{AFHL} (see also \cite[Theorem 1.5]{AFN}).

\begin{proof}[Proof of Theorem \ref{thm:sharp_asymp}]
Let us recall that, by assumption, $\lambda_0=\lambda_{n_0}=\lambda_{n_0}(\Omega;\emptyset)$ is simple and that $\varphi_0$ is an associated $L^2(\Omega)$-normalized eigenfunction. Recall also that $\lambda_\epsilon=\lambda_{n_0}(\Omega;K_\epsilon)$.
	For simplicity of notation we write $V_\epsilon:=V_{K_\epsilon,\phi_0}$ and $C_\epsilon:=\Capa_{\bar{\Omega},c}(K_\epsilon,\phi_0)=q(V_\epsilon)$. Moreover we let $\psi_\epsilon:=\phi_0-V_\epsilon$, that is $\psi_\epsilon $ is the orthogonal projection of $\phi_0$ on $\HO{\Omega}{K_\epsilon}$ with respect to $q$. Indeed there holds
	\[
		q(\psi_\epsilon-\phi_0,\phi)=0\quad\text{for all }\phi\in\HO{\Omega}{K_\epsilon}.
	\]
We split the proof into three steps.

\smallskip\noindent{\bf Step 1.} We claim that 
	\begin{equation}\label{eq:claim1}
|\lambda_\epsilon-\lambda_0|=o(C_\epsilon^{1/2})\quad\text{as }\epsilon\to 0.
\end{equation}
	For any $\phi\in\HO{\Omega}{K_\epsilon}$, being $\lambda_0$ an eigenvalue of $q$, we have
	\begin{equation}\label{eq:psi_eps_phi}
		q(\psi_\epsilon,\phi)-\lambda_0(\psi_\epsilon,\phi)_{L^2(\Omega)}= q(\phi_0,\phi)-\lambda_0(\psi_\epsilon,\phi)_{L^2(\Omega)}=\lambda_0(V_\epsilon,\phi)_{L^2(\Omega)}.
	\end{equation}
According to the notation introduced in Lemma \ref{lemma:A_K}, we can rewrite \eqref{eq:psi_eps_phi}  as
	\begin{equation}\label{eq:psi_eps_phi2}
	(\psi_\epsilon,\phi)_{L^2(\Omega)} = \mu_0 q(\psi_\epsilon,\phi)-(V_\epsilon,\phi)_{L^2(\Omega)},
	\end{equation}
	where $\mu_0=\mu_{n_0}(\Omega;\emptyset)=\mu_{n_0}(\Omega;K)=1/\lambda_0$. By \eqref{eq:A_K_spectral_theorem} we have
	\begin{equation}\label{eq:dist_mu0_sigma}
	\big(\dist(\mu_0,\sigma(A_{K_\epsilon}))\big)^2 \leq \frac{q(A_{K_\epsilon}(\psi_\epsilon)-\mu_0\psi_\epsilon)}{q(\psi_\epsilon)}.
	\end{equation}
	From Proposition \ref{prop:cont_cap} it follows that
\[
|q(\varphi_0,V_\epsilon)|\leq \sqrt{q(\varphi_0)} \sqrt{q(V_\epsilon)} =
\sqrt{\lambda_0}\sqrt{C_\epsilon}=o(1)
\]
as $\epsilon\to0$, so that, using the definition of $\psi_\epsilon$, the denominator in the right hand side of \eqref{eq:dist_mu0_sigma} can be estimated as follows
\begin{equation}\label{eq:q_psi_esp}
q(\psi_\epsilon)=q(\varphi_0)+C_\epsilon-2q(\varphi_0,V_\epsilon)=\lambda_0+o(1)
\end{equation}
as $\epsilon\to0$. Concerning the numerator in the right hand side of \eqref{eq:dist_mu0_sigma}, the definition of $A_{K_\epsilon}$ and relation \eqref{eq:psi_eps_phi2} provide
\[
q(A_{K_\epsilon}(\psi_\epsilon),\phi)=(\psi_\epsilon,\phi)_{L^2(\Omega)}
=\mu_0 q(\psi_\epsilon,\phi)-(V_\epsilon,\phi)_{L^2(\Omega)},
\]
for every $\phi \in H^1_{0,K_\epsilon}$, so that, choosing $\phi=A_{K_\epsilon}(\psi_\epsilon)-\mu_0\psi_\epsilon$ in the previous identity, we arrive at
\[
q(A_{K_\epsilon}(\psi_\epsilon)-\mu_0\psi_\epsilon)=
-(V_\epsilon,A_{K_\epsilon}(\psi_\epsilon)-\mu_0\psi_\epsilon)_{L^2(\Omega)}.
\]
The Cauchy-Schwartz inequality, assumption \eqref{eq:c_assumption} and Lemma \ref{lemma:L^2_norm}, together with the previous equality, provide
\[
\big(q(A_{K_\epsilon}(\psi_\epsilon)-\mu_0\psi_\epsilon)\big)^{1/2}
\leq\frac{1}{\sqrt{c_0}} \|V_\epsilon\|_{L^2(\Omega)} =o(C_\epsilon^{1/2})
\]
as $\epsilon\to0$. By combining the last inequality with \eqref{eq:dist_mu0_sigma} and \eqref{eq:q_psi_esp} we see that
\begin{equation}\label{eq:dist_mu0_sigma2}
\dist(\mu_0,\sigma(A_{K_\epsilon}))=o(C_\epsilon^{1/2}) \quad\text{ as }\epsilon\to0.
\end{equation}
We know from Theorem \ref{thm:cont_eigen} that $\lambda_n(\Omega;K_\epsilon)\to\lambda_n$ as $\epsilon\to 0$ and so, since $\lambda_0$ is assumed to be simple, also $\lambda_\epsilon$ is simple for $\epsilon>0$ sufficiently small. Hence, denoting $\mu_\epsilon=\mu_{n_0}(\Omega;K_\epsilon)=1/\lambda_\epsilon$, we have
\[
\dist(\mu_0,\sigma(A_{K_\epsilon}))=|\mu_0-\mu_\epsilon|
\]
for $\epsilon>0$ small enough. Then, using \eqref{eq:dist_mu0_sigma2},
\[
|\lambda_0-\lambda_\epsilon|=\lambda_0\lambda_\epsilon|\mu_0-\mu_\epsilon|
=o(C_\epsilon^{1/2})
\]
as $\epsilon\to0$, so that claim \eqref{eq:claim1} is proved.

\bigskip
	
	Let now $\Pi_\epsilon\colon L^2(\Omega)\to L^2(\Omega)$ be the orthogonal projection onto the one-dimensional eigenspace corresponding to $\lambda_\epsilon$, that is to say
	\[
	\Pi_\epsilon \psi =(\psi,\phi_\epsilon)_{L^2(\Omega)} \phi_\epsilon
	\quad\text{for every }\psi\in L^2(\Omega),
	\]
where we denoted by $\phi_\epsilon$ a $L^2(\Omega)$-normalized eigenfunction associated to $\lambda_\epsilon$.

\smallskip\noindent{\bf Step 2.} We claim that 
\begin{equation}\label{eq:claim2}
q(\psi_\epsilon-\Pi_\epsilon\psi_\epsilon)=o(C_\epsilon)\quad\text{as }\epsilon\to 0.
\end{equation}
	 Let 
\[
\Phi_\epsilon:=\psi_\epsilon-\Pi_\epsilon\psi_\epsilon \quad\text{and}\quad
\xi_\epsilon:=A_{K_\epsilon}(\Phi_\epsilon)-\mu_\epsilon\Phi_\epsilon.
\]	 
Using the fact that $\Pi_\epsilon\psi_\epsilon$ belongs to the eigenspace associated to $\lambda_\varepsilon$ and relation \eqref{eq:psi_eps_phi}, we have, for every $\phi\in H^1_{0,K_\epsilon}$,
	\begin{align*}
q(\Phi_\epsilon,\phi)-\lambda_\epsilon (\Phi_\epsilon,\phi)_{L^2(\Omega)} 
&=q(\psi_\epsilon,\phi)-\lambda_0 (\psi_\epsilon,\phi)_{L^2(\Omega)}  + (\lambda_0-\lambda_\epsilon) (\psi_\epsilon,\phi)_{L^2(\Omega)} \\
&=\lambda_0 (V_\epsilon,\phi)_{L^2(\Omega)} + (\lambda_0-\lambda_\epsilon) (\psi_\epsilon,\phi)_{L^2(\Omega)}.
        \end{align*}
Thanks to the previous relation, with $\phi=\xi_\epsilon$, and the definition of $A_{K_\epsilon}$, we obtain
	\begin{equation}\label{eq:q_xi_eps}
	\begin{aligned}	 
	 q(\xi_\epsilon)=q(A_{K_\epsilon}(\Phi_\epsilon),\xi_\epsilon)-\mu_\epsilon q(\Phi_\epsilon,\xi_\epsilon)
	 &=-\mu_\epsilon \left[q(\Phi_\epsilon,\xi_\epsilon)-\lambda_\epsilon (\Phi_\epsilon,\xi_\epsilon)_{L^2(\Omega)}\right] \\
	 &=-\frac{\lambda_0}{\lambda_\epsilon} (V_\epsilon,\xi_\epsilon)_{L^2(\Omega)}
	 -\frac{\lambda_0-\lambda_\epsilon}{\lambda_\epsilon}(\psi_\epsilon,\xi_\epsilon)_{L^2(\Omega)}.
 	\end{aligned}
 	\end{equation}
Combining \eqref{eq:psi_eps_phi2} and \eqref{eq:q_psi_esp} we obtain that $\norm{\psi_\epsilon}_{L^2(\Omega)}=1+o(1)$ as $\epsilon\to 0$. Therefore, from \eqref{eq:q_xi_eps}, taking into account \eqref{eq:q_psi_esp}, we deduce the existence of a constant $C$ independent from $\epsilon$ such that
\[
\sqrt{q(\xi_\epsilon)} \leq C \left( \|V_\epsilon\|_{L^2(\Omega)}+|\lambda_0-\lambda_\epsilon|\right).
\]
Thus, using the definition of $\xi_\epsilon$, Lemma \ref{lemma:L^2_norm} and \eqref{eq:claim1}, we obtain that
\begin{equation}\label{eq:xi_eps_norm}
q(A_{K_\epsilon}(\Phi_\epsilon)-\mu_\epsilon\Phi_\epsilon)
=o(C_\epsilon) \quad \text{as }\epsilon\to0.
\end{equation}
Let
\[
N_\epsilon=\left\{ w\in H^1_{0,K_\epsilon}: \, (w,\phi_\epsilon)_{L^2(\Omega)}=0 \right\}.
\]
Note that, by definition, $\Phi_\epsilon\in N_\epsilon$. Moreover, being $\phi_\epsilon$ an eigenfunction associated to $\lambda_\epsilon$, from the definition of $A_{K_\epsilon}$ in \eqref{eq:A_K_def} it follows that
\[
A_{K_\epsilon}(w) \in N_\epsilon \quad\text{for every}\quad w\in N_\epsilon.
\]
In particular, the following operator
	\[
		\tilde{A}_\epsilon=A_{K_\epsilon}\restr{N_\epsilon}\colon N_\epsilon\to N_\epsilon
	\]
is well defined. One can easily check that $\tilde{A}_\epsilon$ satisfies properties (i)-(iii) in Lemma \ref{lemma:A_K}; moreover $\sigma(\tilde{A}_\epsilon)=\sigma(A_{K_\epsilon})\setminus \{\mu_\epsilon\}$. In particular, letting $\delta>0$ be such that $(\dist(\mu_\epsilon,\sigma(\tilde{A}_\epsilon)))^2\geq\delta$ for every $\epsilon$ small enough, estimate \eqref{eq:xi_eps_norm}, combined with \eqref{eq:A_K_spectral_theorem}, provides
\[
q(\psi_\epsilon-\Pi_\epsilon\psi_\epsilon) =q(\Phi_\epsilon)
\leq \frac{1}{\delta} (\dist(\mu_\epsilon,\sigma(\tilde{A}_\epsilon)))^2\, q(\Phi_\epsilon)
\leq \frac{1}{\delta} q(\tilde{A}_\epsilon(\Phi_\epsilon)-\mu_\epsilon\Phi_\epsilon)
=o(C_\epsilon)
\]
as $\epsilon\to0$, thus proving claim \eqref{eq:claim2}.

\smallskip\noindent{\bf Step 3.} We claim that 
\begin{equation}\label{eq:claim3}
	\lambda_\epsilon-\lambda_0=C_\epsilon+o(C_\epsilon)\quad\text{as }\epsilon\to 0.
      \end{equation}
	From the definition of $\psi_\epsilon$, Lemma \ref{lemma:L^2_norm}, and the previous step we deduce that
	\[
		\norm{\phi_0-\Pi_\epsilon\psi_\epsilon}_{L^2(\Omega)}\leq \norm{V_\epsilon}_{L^2(\Omega)}+\norm{\psi_\epsilon-\Pi_\epsilon\psi_\epsilon}_{L^2(\Omega)}=o(C_\epsilon^{1/2})\quad\text{as }\epsilon\to 0,
	\]
	which yields both
\begin{equation}\label{eq:Pi_eps_norm}
\norm{\Pi_\epsilon\psi_\epsilon}_{L^2(\Omega)}=1+o(C_\epsilon^{1/2})
\quad\text{as }\epsilon\to 0
\end{equation}	
and, consequently,
\begin{equation}\label{eq:phi_eps_norm}
\norm{\phi_0-\hat\phi_\epsilon}_{L^2(\Omega)}= o(C_\epsilon^{1/2})
\quad\text{as }\epsilon\to 0,
\end{equation}	
 where $\hat\phi_\epsilon=\Pi_\epsilon\psi_\epsilon/\|\Pi_\epsilon\psi_\epsilon\|_{L^2(\Omega)}$.
 Using the fact that $\hat\phi_\epsilon$ is an eigenfunction associated to $\lambda_\epsilon$ and \eqref{eq:psi_eps_phi2} with $\phi=\hat \phi_\epsilon$, we obtain
\begin{equation}\label{eq:psi_eps_phi_eps}
(\lambda_\epsilon-\lambda_0) (\psi_\epsilon,\hat\phi_\epsilon)_{L^2(\Omega)}
=\lambda_0 (V_\epsilon,\hat\phi_\epsilon)_{L^2(\Omega)}.
\end{equation}
But actually 
\begin{equation}\label{eq:V_eps_phi_eps}
\lambda_0(V_\epsilon,\hat\phi_\epsilon)=C_\epsilon+o(C_\epsilon) \quad\text{as }\epsilon\to 0.
\end{equation}
Indeed, since $\psi_\epsilon$ and $V_\epsilon$ are orthogonal with respect to $q$ and $\phi_0$ is an eigenfunction corresponding to $\lambda_0$, we have that
\begin{align*}
C_\epsilon=q(V_\epsilon)&=q(V_\epsilon,\phi_0-\psi_\epsilon)=q(V_\epsilon,\phi_0)=\lambda_0(V_\epsilon,\phi_0)_{L^2(\Omega)} \\
&=\lambda_0 (V_\epsilon,\hat\phi_\epsilon)_{L^2(\Omega)} 
+\lambda_0 (V_\epsilon,\phi_0-\hat\phi_\epsilon)_{L^2(\Omega)},
\end{align*}
so that Lemma \ref{lemma:L^2_norm} and relation \eqref{eq:phi_eps_norm} allow us to prove \eqref{eq:V_eps_phi_eps}. Concerning the left hand side of \eqref{eq:psi_eps_phi_eps} we have, exploiting \eqref{eq:Pi_eps_norm} and \eqref{eq:claim2}, 
\[
(\psi_\epsilon,\hat\phi_\epsilon)_{L^2(\Omega)}
= \frac{(\psi_\epsilon-\Pi_\epsilon\psi_\epsilon,\Pi_\epsilon\psi_\epsilon)_{L^2(\Omega)}+\|\Pi_\epsilon\psi_\epsilon\|_{L^2(\Omega)}^2}{\|\Pi_\epsilon\psi_\epsilon\|_{L^2(\Omega)}}=1+o(C_\epsilon^{1/2}) \quad\text{as }\epsilon\to 0.
\]
By combining the last estimate with \eqref{eq:psi_eps_phi_eps} and \eqref{eq:V_eps_phi_eps}, we complete the proof.
\end{proof}

\section{Set scaling to a boundary point}\label{sec:blow_up}

Hereafter we assume $N\geq 3$. The purpose of this section is to find, in some particular cases, the explicit behaviour of the function $\epsilon\mapsto \Capa_{\bar{\Omega},c}(K_\epsilon,\phi_0)$ and therefore to give a more concrete connotation to the asymptotic expansion proved in Theorem \ref{thm:sharp_asymp}. We consider a particular class of families of concentrating sets, that includes the case in which $K_\epsilon$ is obtained by rescaling a fixed compact set $K$ by a factor $\epsilon>0$. 

First, we prove  Proposition \ref{prop:vanish_phi_0}.

\begin{proof}[Proof of Proposition \ref{prop:vanish_phi_0}]
	The proof is organized as follows: we first  derive \eqref{eq:phi_0_psi_gamma} for a certain diffeomorsphism in the class $\mathcal{C}$ and then we prove that it holds true for any diffeomorphism in $\mathcal{C}$. Hence we start by taking into consideration a particular diffeomorphism $\Phi_{\textup{AE}}\in\mathcal{C}$, first introduced in \cite{Adolfsson1997}.
 Let $g\in C^{1,1}(B_{r_0}')$ be, as in \eqref{eq:hp_boundary_1}, the function that describes $\partial\Omega$ locally near the origin and let $\rho\in C_c^\infty(\R^{N-1})$ be such that $\supp \rho\sub B_1'$, $\rho\geq 0$ in $\R^{N-1}$, $\rho \not\equiv 0$ and $-\nabla\rho (y')\cdot y'\geq 0$ in $\R^{N-1}$. Then, for any $\delta>0$, let
	\begin{equation*}
	\rho_{\delta}(y')=c_\rho^{-1} \delta^{-N+1}\rho\left(\frac{y'}{\delta}\right)\quad\text{with}\quad c_\rho=\int_{\R^{N-1}}\rho(y')\dy',
	\end{equation*}
	be a family of mollifiers. Now, for $j=1,\dots,N-1$ and $y_N>0$, we let
	\begin{equation*}\label{eq:u_j}
	u_j(y',y_N):=y_j-y_N\left( \rho_{y_N}\star \frac{\partial g}{\partial y_j} \right)(y'),
	\end{equation*}
 	where $\star$ denotes the convolution product. Moreover, we define
	\begin{equation*}
	\psi_j(y',y_N):=\begin{cases}
	u_j(y',y_N), &\text{for }y_N>0, \\
	4u_j(y',-\frac{y_N}{2})-3u_j(y',-y_N), &\text{for }y_N<0.
	\end{cases}
	\end{equation*}
	One can prove that $\psi_j\in C^{1,1}(B_{r_0/2})$. Finally, we let $F\colon B_{r_0/2}\to\R^N$ be defined as follows
	\begin{equation*}
	F(y',y_N):=(\psi_1(y',y_N),\dots,\psi_{N-1}(y',y_N),y_N+g(y')).
	\end{equation*}
	Computations show that the Jacobian matrix of $F$ on the hyperplane $\{y_N=0\}$ is as follows
	\begin{equation*}
	J_{F}(y',0)=		\begin{pmatrix}
	1 & 0 & \cdots & 0 & -\frac{\partial g}{\partial y_1}(y') \\
	0 & 1 & \cdots & 0 & - \frac{\partial g}{\partial y_2}(y') \\
	\vdots & \vdots & \ddots & \vdots & \vdots \\
	0 & 0 & \cdots & 1 & -\frac{\partial g}{\partial y_{N-1}}(y') \\
	\frac{\partial g}{\partial y_1}(y') & \frac{\partial g}{\partial y_2}(y') & \cdots & \frac{\partial g}{\partial y_{N-1}}(y') & 1
	\end{pmatrix},
	\end{equation*}
and so $\abs{\det J_{F}(0)}=1+\abs{\nabla g(0)}^2=1$. Hence, by the inverse function theorem, $F$ is invertible in a neighbourhood of the origin: namely there exists $r_1\in (0,r_0/2)$ such that
$F$ is a diffeomorphism of class $C^{1,1}$ from $B_{r_1}$ to $\mathcal U=F(B_{r_1})$ for some $\mathcal U$ open neighbourhood of $0$. Moreover, it is possible to choose $r_1$ sufficiently small
so that 
\begin{align*}
	&F^{-1}(\mathcal U\cap \Omega)=\R^N_+\cap  B_{r_1}= B_{r_1}^+, \\
	&F^{-1}(\mathcal U\cap \partial\Omega)=\partial\R^N_+\cap B_{r_1}=B_{r_1}',
	\end{align*}
	which means that, near the origin, the image of $\Omega$ through $F^{-1}$ has flat boundary (coinciding with $\partial\R^N_+$). 
In particular we have that the diffeomorphism 
\[
\Phi_{\textup{AE}}:\mathcal U\to B_{r_1},\quad   \Phi_{\textup{AE}}:=F^{-1}
\]
 belongs to the class $\mathcal C$ defined in \eqref{eq:diffeo_class}.

For $y\in \Phi_{\textup{AE}}(\mathcal U\cap\Omega)=B_{r_1}^+$, we let $\hat{\phi}_0(y):=\phi_0(\Phi_{\textup{AE}}^{-1}(y))$; from the equation satisfied by $\phi_0$ in $\Omega$, we deduce that
	\begin{equation}\label{eq:hat_phi}
\int_{B_{r_1}^+}(A(y)\nabla\hat{\phi}_0(y)\cdot\nabla\phi(y)+\hat{c}(y)\hat{\phi}_0(y)\phi(y))\dy=\lambda_0\int_{B_{r_1}^+}p(y)\hat{\phi}_0(y)\phi(y)\dy
	\end{equation}
	for all $\phi\in H^1_{0,S_{r_1}^+}(B_{r_1}^+)$, where $S_{r_1}^+:=\partial B_{r_1}\cap \overline{\R^N_+}$ and
	\begin{equation}\label{eq:A_c_p}
	\begin{gathered}
	A(y)=J_{\Phi_{\textup{AE}}}(\Phi_{\textup{AE}}^{-1}(y))J_{\Phi_{\textup{AE}}}(\Phi_{\textup{AE}}^{-1}(y))^T\abs{\det J_{\Phi_{\textup{AE}}}(\Phi_{\textup{AE}}^{-1}(y))}^{-1},\\
	\hat{c}(y)=c(\Phi_{\textup{AE}}^{-1}(y))\abs{\det J_{\Phi_{\textup{AE}}}(\Phi_{\textup{AE}}^{-1}(y))}^{-1},\\
	p(y)=\abs{\det J_{\Phi_{\textup{AE}}}(\Phi_{\textup{AE}}^{-1}(y))}^{-1}.
	\end{gathered}
	\end{equation}
	We point out that equation \eqref{eq:hat_phi} is the weak formulation of the problem
	\begin{equation*}
	\begin{bvp}
	-\dive(A(y)\nabla \hat{\phi}_0(y))+\hat{c}(y)\hat{\phi}_0(y)&=\lambda_0 \,p(y)\hat{\phi}_0(y), &&\text{in }B_{r_1}^+,\\
	\nabla \hat{\phi}_0(y)A(y)\cdot\nnu(y)&=0,&&\text{on }B_{r_1}'.
	\end{bvp}
	\end{equation*}
	One can prove that $A$ is symmetric and uniformly elliptic in $B_{r_1}^+$ (if $r_1$ is choosen sufficiently small); moreover, if we denote $A(y)=(a_{i,j}(y))_{i,j=1,\dots,N}$, then $a_{i,j}\in C^{0,1}(B_{r_1}^+\cup B_{r_1}')$ and
	\begin{equation}\label{eq:A_coeff}
	\begin{aligned}
	a_{i,i}(y',0)&=1+\abs{\nabla g(y')}^2-\left( \frac{\partial g}{\partial y_i}(y') \right)^2, &&\text{for all }i=1,\dots,N-1, \\
	a_{i,j}(y',0)&=-\frac{\partial g}{\partial y_i}(y') \frac{\partial g}{\partial y_j}(y'), &&\text{for all }i,j=1,\dots,N-1,~i\neq j, \\
	a_{i,N}(y',0)&=0, &&\text{for all }i=1,\dots,N-1, \\	 
	a_{N,N}=1.
	\end{aligned}
	\end{equation}
	Therefore, if we consider an even reflection of $\hat{\phi}_0$ 
(which we still denote as $\hat{\phi}_0$)
through the hyperplane $\{x_N=0\}$ in $B_{r_1}$, then it satisfies, in this ball, an elliptic equation in divergence form with Lipschitz continuous second order coefficients. More in particular $\hat{\phi}_0$ weakly satisfies
	\begin{equation}\label{eq:reflected}
	-\dive (\bar{A}(y)\nabla\hat{\phi}_0(y))=h(y)\hat{\phi}_0(y)\quad\text{in }B_{r_1}
	\end{equation}
	where
	\[
	\bar{A}(y):=\begin{cases}
	A(y_1,\dots,y_{N-1},y_N), &\text{if }y_N>0, \\
	QA(y_1,\dots,y_{N-1},-y_N) Q, &\text{if }y_N<0,
	\end{cases}
	\]
	with
	\[
	Q:=\begin{pmatrix}
	1 & 0 & \cdots & 0 & 0 \\
	0 & 1 & \cdots & 0 & 0 \\
	\vdots & \vdots & \ddots & \vdots &\vdots \\
	0 & 0 & \cdots & 1 & 0 \\
	0 &0 &\cdots & 0 & -1 
	\end{pmatrix}
	\]
	and $h\in L^\infty(B_{r_1})$. We point out that Lipschitz continuity of the coefficients of the matrix $\bar{A}$ comes from the fact that $a_{i,j}\in C^{0,1}(B_{r_1}^+\cup B_{r_1}')$ and that $a_{i,N}(y',0)=0$ for all $i<N$. Hence we deduce, from \cite{Robbiano1988}, that there exists a homogeneous harmonic polynomial $\psi_\gamma$ of degree $\gamma\in \N$ such that 
\[
\hat{\phi}_0(y)=\psi_\gamma(y)+R_\gamma(y)
\]
where $\|R_\gamma\|_{H^1(B_r)}=O(r^{\gamma+\frac N2+1-\delta})$ for some $\delta\in (0,1)$ as $r\to 0$. In particular
\eqref{eq:phi_0_psi_gamma} holds.
	
	Now let $\Phi\in \mathcal{C}$. We can rewrite
	\begin{equation*}
		\phi_0(\Phi^{-1}(\epsilon x))=\phi_0\left(\Phi_{\textup{AE}}^{-1} \left(\epsilon G_\epsilon(x)\right)\right),\quad\text{with }G_\epsilon(x):=\frac{(\Phi_{\textup{AE}}\circ \Phi^{-1})(\epsilon x)}{\epsilon}.
	\end{equation*}
	Thanks to regularity properties of $\Phi_{\textup{AE}}$ and $\Phi$, we have that
	\[
		G_\epsilon(x)=x+\abs{x}^2 O(\epsilon),\quad\text{as }\epsilon\to 0.
	\]
	As a consequence, one can prove that
	\[
		\epsilon^{-\gamma}\phi_0\left(\Phi_{\textup{AE}}^{-1} \left(\epsilon G_\epsilon(x)\right)\right)\to \psi_\gamma(x),\quad\text{in }H^1(B_R^+)\text{ as }\epsilon\to 0,
	\]
	for all $R>0$, thus concluding the proof of \eqref{eq:phi_0_psi_gamma}.

The proofs of \eqref{eq:1} and \eqref{eq:2} follow from 
\eqref{eq:phi_0_psi_gamma} by making a change of variable in the integral and taking into account that, for all $R>0$, 
\[
\chi_{\epsilon^{-1}\Phi(\Omega\cap B_{R\epsilon})}\to
\chi_{B_R^+}\quad\text{a.e. in }\R^N,
\]
with $\chi_A$ denoting as usual the characteristic function of a set $A\subset \R^N$,
as one can easily deduce from the fact that $\Phi^{-1}(y)=y+O(|y|^2)$ as $y\to 0$.
\end{proof}

In this section we consider a particular class of families of compact sets concentrating to the origin (which is assumed to belong to $\partial\Omega$), as described in Section \ref{sec:sets-scal-bound}. Let us fix $\Phi\in\mathcal{C}$ with 
\begin{equation}\label{eq:3}
\Phi:\mathcal U_0\to B_{R_0}
\end{equation}
being $\mathcal U_0$ an open neighbourhood of $0$ and $R_0>0$ such that $\Phi(\mathcal U_0\cap \Omega)=
B_{R_0}^+$ and $\Phi(\mathcal U_0\cap \partial\Omega)=B_{R_0}'$.
In the rest of this section, we will use the same notation as in the proof of
 Proposition \ref{prop:vanish_phi_0} defining 
 $A$ and $\hat{c}$  as in \eqref{eq:A_c_p} (with $\Phi$ instead of $\Phi_{\textup{AE}}$).

Since $A\in C^{0,1}(B_{R_0}^+,{\mathcal M}_{N\times N})$ (with ${\mathcal
  M}_{N\times N}$ denoting the space of $N\times N$  real matrices) 
 and $A(0)=I_N$,
it is possibile to choose $R_0>0$ small enough in order to have 
\[
\|A(x)-I_N\|_{{\mathcal M}_{N\times N}}\leq \frac12
\quad\text{and}\quad 
\hat{c}(x)\geq \frac{c_0}2\quad \text{for a.e. $x\in B_{R_0}^+$}
\]
 (with $\|\cdot\|_{{\mathcal M}_{N\times N}}$ denoting the operator norm  on ${\mathcal M}_{N\times N}$). With this choice of $R_0$, we have that 
\begin{align}\label{eq:4}
  \int_{B_{R_0/\epsilon}^+}\!\!A(\epsilon x)\nabla u(x)\cdot\nabla u(x)\dx&=
  \int_{B_{R_0/\epsilon}^+}\!\!(A(\epsilon x)-I_N)\nabla u(x)\cdot\nabla u(x)\dx+  \int_{B_{R_0/\epsilon}^+}\!\!|\nabla u(x)|^2\dx\\
&\notag \geq \frac12\, \int_{B_{R_0/\epsilon}^+}\!\!|\nabla u(x)|^2\dx
\end{align}
and 
\begin{equation}\label{eq:6}
  \int_{B_{R_0/\epsilon}^+}\!\!\hat{c}(\epsilon x)u^2(x)\dx\geq \frac{c_0}2   \int_{B_{R_0/\epsilon}^+}\!\!u^2(x)\dx
\end{equation}
for all $u\in H^1(B_{R_0/\epsilon}^+)$.

Let $K_\epsilon\sub \overline{\Omega}\cap \mathcal U_0$ be a compact set for any $\epsilon\in (0,1)$ such that \eqref{eq:hp_blow_up_1_intr} and \eqref{eq:hp_blow_up_2_intr} hold. 
In the following we denote 
\[
\tilde{K}_\epsilon:=\Phi(K_\epsilon)/\epsilon.
\]
For any compact set $H\sub\R^N$ we define the \emph{radius} of $H$ as follows
\begin{equation}\label{eq:radius_H_def}
r(H):=\max_{x\in H}\abs{x}.
\end{equation}
\begin{remark}\label{rmk:equiv_mosco}
	Concerning hypothesis \eqref{eq:hp_blow_up_2_intr}, one can prove that the convergence of $\R^N\setminus \tilde{K}_\epsilon$ to $\R^N\setminus K$ in the sense of Mosco, as $\epsilon\to 0$, introduced in Definition \ref{def:mosco_D}, is equivalent to the convergence of the space $\HO{B_R^+}{\tilde{K}_\epsilon}$ to the space $\HO{B_R^+}{K}$ in the sense of Mosco for all $R>r(M)$. We recall that $\HO{B_R^+}{\tilde{K}_\epsilon}$ is said to \emph{converge to $\HO{B_R^+}{K}$ in the sense of Mosco} if the following holds:
	\begin{enumerate}
		\item[(1)] the weak limit points (as $\epsilon\to 0$) in $H^1(B_R^+)$ of every family of functions $\{u_\epsilon\}_{\epsilon}\subseteq H^1(B_R^+)$, such that $u_\epsilon\in\HO{B_R^+}{\tilde{K}_\epsilon} $ for every $\epsilon$, belong to $\HO{B_R^+}{K}$;
		\item[(2)] for every $u\in\HO{B_R^+}{K}$ there exists a family $\{u_\epsilon\}_\epsilon\subseteq H^1(B_R^+)$ such that $u_\epsilon\in \HO{B_R^+}{\tilde{K}_\epsilon}$ for every $\epsilon$ and $u_\epsilon\to u$ in $H^1(B_R^+)$, as $\epsilon\to 0$.
	\end{enumerate}
The proof of this equivalence is essentially based on the continuity of the extension operator for functions in $H^1(B_R^+)$ and of the restriction operator on $B_R^+$ for functions in $H^1(\R^N)$. Analogously, one can also prove that they are both also equivalent to the convergence of  $\HO{B_R^+}{\tilde{K}_\epsilon\cup S_R^+}$ to  $\HO{B_R^+}{K\cup S_R^+}$ in the sense of Mosco, as $\epsilon\to 0$. These equivalent hypotheses turn out to be more adequate for our needs in this final part of the section.
\end{remark}

In view of Proposition \ref{prop:vanish_phi_0} it is natural to consider the following rescalings of the limit eigenfunction $\phi_0$ and of the $\phi_0$-capacitary potential of $K_\epsilon$
\begin{equation}\label{eq:def_tilde_phi_V}
\tilde{\phi}_\epsilon(y):=\frac{\phi_0(\Phi^{-1}(\epsilon y))}{\epsilon^\gamma
}=\frac{\hat{\phi}_0(\epsilon y)}{\epsilon^{\gamma}},\quad\tilde{V}_\epsilon(y):=\frac{V_{K_\epsilon,\phi_0}(\Phi^{-1}(\epsilon y))}{\epsilon^\gamma},\quad y\in B_{R_0/\epsilon}^+,
\end{equation}
where $\hat{\phi}_0(z)=\phi_0(\Phi^{-1}(z)) \in H^1(B_{R_0}^+)$. 
We have that  $\tilde{\phi}_\epsilon\in H^1(B_{R_0/\epsilon}^+)$, $\tilde{V}_\epsilon-\tilde{\phi}_\epsilon\in\HO{B_{R_0/\epsilon}^+}{\tilde{K}_\epsilon}$, and they satisfy
\begin{equation}\label{eq:blow_up_eq_phi}
\int_{B_{R_0/\epsilon}^+}(A(\epsilon y)\nabla\tilde{\phi}_\epsilon(y)\cdot\nabla\phi(y)+\epsilon^2\hat{c}(\epsilon y)\tilde{\phi}_\epsilon(y)\phi(y))\dy
=\lambda_0\epsilon^2\int_{B_{R_0/\epsilon}^+}p(\epsilon y)\tilde{\phi}_\epsilon(y)\phi(y)\dy,
\end{equation}
for all $\phi\in H^1_{0,S_{R_0/\epsilon}^+}(B_{R_0/\epsilon}^+)$
and 
\begin{equation}\label{eq:blow_up_eq_V}
\int_{B_{R_0/\epsilon}^+}(A(\epsilon y)\nabla\tilde{V}_\epsilon(y)\cdot\nabla\phi(y)+\epsilon^2\hat{c}(\epsilon y)\tilde{V}_\epsilon(y)\phi(y))\dy=0,
\end{equation}
for all $\phi\in \HO{B_{R_0/\epsilon}^+}{\tilde{K}_\epsilon \cup S^+_{R_0/\epsilon}}$.

The following Lemma provides a first, rough estimate of the boundary Sobolev capacity appearing in the asymptotic expansion in Theorem \ref{thm:sharp_asymp}.

\begin{lemma}\label{lemma:big_O_cap}
	We have that
	\[
	\Capa_{\bar{\Omega},c}(K_\epsilon,\phi_0)=O(\epsilon^{N+2\gamma-2}),\quad\text{as }\epsilon\to 0.
	\]
\end{lemma}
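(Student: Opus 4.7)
My plan is to construct an explicit competitor for the infimum defining $\Capa_{\bar{\Omega},c}(K_\epsilon,\phi_0)$. Since $\Capa_{\bar{\Omega},c}(K_\epsilon,\phi_0)\leq q(u)$ for any $u\in H^1(\Omega)$ with $u-\phi_0\in\HO{\Omega}{K_\epsilon}$, it suffices to exhibit one such $u_\epsilon$ satisfying $q(u_\epsilon)=O(\epsilon^{N+2\gamma-2})$. The natural choice is to localize $\phi_0$ by a cutoff supported in a neighbourhood of the origin of diameter $O(\epsilon)$ that equals $1$ on $K_\epsilon$; then $u_\epsilon$ will coincide with $\phi_0$ on $K_\epsilon$, so $u_\epsilon-\phi_0$ will belong to $\HO{\Omega}{K_\epsilon}$.

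Concretely, I would fix $R>r(M)$ (with $r(M)$ as in \eqref{eq:radius_H_def}), so that by \eqref{eq:hp_blow_up_1_intr} one has $\Phi(K_\epsilon)\subseteq B_{R\epsilon}$ for every $\epsilon\in(0,1)$. Then I would pick $\chi\in C_c^\infty(\R^N)$ with $\chi\equiv 1$ on $B_R$, $\supp\chi\subseteq B_{2R}$ and $0\leq\chi\leq 1$, and define
\[
\chi_\epsilon(x):=\chi\big(\Phi(x)/\epsilon\big), \qquad u_\epsilon:=\phi_0\,\chi_\epsilon,
\]
both extended by $0$ outside $\Phi^{-1}(B_{2R\epsilon})$. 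Since $\chi_\epsilon\equiv 1$ in an open neighbourhood of $K_\epsilon$, the function $u_\epsilon-\phi_0=\phi_0(\chi_\epsilon-1)$ belongs to $C^\infty_{\mathrm{c}}(\overline{\Omega}\setminus K_\epsilon)$-closure inside $H^1(\Omega)$, hence lies in $\HO{\Omega}{K_\epsilon}$, making $u_\epsilon$ an admissible competitor.

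The next step is to estimate $q(u_\epsilon)$. Using the Leibniz rule and the elementary inequality $(a+b)^2\leq 2a^2+2b^2$, together with the bound $|\nabla\chi_\epsilon(x)|\leq C_1/\epsilon$ (for some $C_1>0$ depending only on $\chi$ and on $\|J_\Phi\|_{L^\infty}$) and $\|c\|_{L^\infty}<\infty$, I would obtain
\[
q(u_\epsilon)\leq C_2\int_{\Omega\cap\Phi^{-1}(B_{2R\epsilon})}\!\!|\nabla\phi_0|^2\,\dx+\frac{C_2}{\epsilon^{2}}\int_{\Omega\cap\Phi^{-1}(B_{2R\epsilon})}\!\!\phi_0^{2}\,\dx+C_2\int_{\Omega\cap\Phi^{-1}(B_{2R\epsilon})}\!\!\phi_0^{2}\,\dx
\]
for a constant $C_2>0$ independent of $\epsilon$. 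Because $\Phi^{-1}(y)=y+O(|y|^{2})$ as $y\to 0$, the set $\Phi^{-1}(B_{2R\epsilon})$ is contained in a Euclidean ball $B_{\tilde R\epsilon}$ around the origin for some $\tilde R>0$ and $\epsilon$ small. Applying \eqref{eq:2} and \eqref{eq:1} of Proposition \ref{prop:vanish_phi_0} with $R$ replaced by $\tilde R$, the first integral is $O(\epsilon^{N+2\gamma-2})$, while the second and third are $O(\epsilon^{N+2\gamma})$; dividing the middle term by $\epsilon^{2}$ keeps it of order $\epsilon^{N+2\gamma-2}$, so all three contributions are $O(\epsilon^{N+2\gamma-2})$, yielding the claim.

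The computation is essentially routine once the cutoff is chosen correctly, so the only genuinely delicate point is making the change of variables via $\Phi$ and checking that the rescaling exponents coming from $|\nabla\chi_\epsilon|\sim\epsilon^{-1}$ combine with the vanishing rates in \eqref{eq:1}-\eqref{eq:2} to give the same power $\epsilon^{N+2\gamma-2}$; this is what forces the choice of a cutoff that passes from $1$ to $0$ in an annulus of thickness $O(\epsilon)$ and explains why the statement involves precisely the exponent $N+2\gamma-2$.
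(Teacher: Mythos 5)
Your proof is correct and follows essentially the paper's own argument: both use a cutoff $\eta_\epsilon$ supported at scale $\epsilon$ near the origin and equal to $1$ on a neighbourhood of $K_\epsilon$, so that $\eta_\epsilon\phi_0$ is admissible (with $\eta_\epsilon\phi_0-\phi_0\in\HO{\Omega}{K_\epsilon}$), and then bound $q(\eta_\epsilon\phi_0)$ via \eqref{eq:1}--\eqref{eq:2} of Proposition~\ref{prop:vanish_phi_0}, the $\epsilon^{-2}$ from $|\nabla\eta_\epsilon|^2$ cancelling against the extra $\epsilon^2$ in the $L^2$ estimate. The only cosmetic difference is that the paper takes the cutoff radial in $x$ (after observing that $K_\epsilon\subset B_{R\epsilon}$ because $\Phi^{-1}(y)=y+O(|y|^2)$), whereas you define it through $\Phi$ and then enclose $\Phi^{-1}(B_{2R\epsilon})$ in a Euclidean ball; this makes no substantive difference.
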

\begin{proof}
Recall the definition of $M$ in \eqref{eq:hp_blow_up_1_intr} and that of $r(M)$ in \eqref{eq:radius_H_def}.
Since $K_\epsilon\subseteq \Phi^{-1}(\epsilon \overline{B_{r(M)}})$ and $\Phi^{-1}(y)=y+O(|y|^2)$ as $|y|\to0$,
there exists $R>0$ such that $K_\epsilon\subset B_{R\epsilon}$ for $\epsilon$ sufficiently small.
Now let $\eta_\epsilon\in C_c^\infty(\R^N)$ be such that
	\begin{equation*}
	\begin{gathered}
	0\leq \eta_\epsilon(y)\leq 1, \\
	\abs{\nabla \eta_\epsilon}\leq \frac{2}{\epsilon R},
	\end{gathered}
	\qquad\quad
	\eta_\epsilon(y)=\begin{cases}
	0, &\text{for }y\in\R^N\setminus B_{2\epsilon R}, \\
	1, &\text{for }y\in B_{\epsilon R}.
	\end{cases}
	\end{equation*}
Since $\eta_\epsilon\varphi_0\in \HO{\Omega}{K_\epsilon}$, from \eqref{eq:def_f_capacity} we have that 
\begin{align*}
\Capa_{\bar{\Omega},c}(K_\epsilon,\varphi_0)&\leq q(\eta_\epsilon\varphi_0)\\
&\leq 2\int_{\Omega\cap B_{2R\epsilon}}|\nabla \eta_\epsilon(x)|^2\varphi_0(x)\dx\\
&\quad\quad
+2\int_{\Omega\cap B_{2R\epsilon}}|\eta_\epsilon(x)|^2|\nabla\varphi_0(x)|^2 \dx+\|c\|_{L^\infty(\Omega)}
\int_{\Omega\cap B_{2R\epsilon}}\varphi_0^2(x)\dx\\
&\leq \left(\frac8{\epsilon^2R^2}+\|c\|_{L^\infty(\Omega)}\right) \int_{\Omega\cap B_{2R\epsilon}}\varphi_0^2(x)\dx+2
\int_{\Omega\cap B_{2R\epsilon}}|\nabla\varphi_0(x)|^2 \dx
\end{align*}
The conclusion then follows from \eqref{eq:1} and \eqref{eq:2}. 
\end{proof}

The following lemma, whose proof is classical, is useful in order to pass from a global scale (functions in $\mathcal{D}^{1,2}(\overline{\R^N_+})$) to a local one (meaning functions in $H^1(B_R^+)$) and vice versa.

\begin{lemma}\label{lemma:H_implies_D}
	Let $K\sub\overline{\R^N_+}$ be compact. If $f\in \mathcal{D}^{1,2}(\overline{\R^N_+})$ is such that $f\restr{B_R^+}\in \HO{B_R^+}{K}$ for some $R>r(K)$, then $f\in \mathcal{D}^{1,2}(\overline{\R^N_+}\setminus K)$. Conversely, if $f\in \mathcal{D}^{1,2}(\overline{\R^N_+}\setminus K)$, then $f\restr{B_R^+}\in \HO{B_R^+}{K}$ for all $R>r(K)$.
\end{lemma}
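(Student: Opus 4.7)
The plan is to prove both implications by a standard cutoff argument, the only technical ingredient being the Sobolev embedding $\DC\hookrightarrow L^{2^*}(\R^N_+)$ (which uses $N\geq 3$, in force throughout this section) combined with H\"older's inequality on bounded sets, to upgrade $L^{2^*}$ convergence to $L^2$ convergence wherever multiplication by $\nabla\eta$ produces a lower-order term.

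For the forward implication, I would fix $r$ with $r(K)<r<R$ and a cutoff $\eta\in C_c^\infty(\R^N)$ satisfying $\eta\equiv 1$ on $B_r$ and $\supp\eta\Subset B_R$, and split $f=\eta f+(1-\eta)f$. Since $f\restr{B_R^+}\in\HO{B_R^+}{K}$, I can pick $\phi_n\in C_c^\infty(\overline{B_R^+}\setminus K)$ with $\phi_n\to f\restr{B_R^+}$ in $H^1(B_R^+)$; because $\supp\eta\Subset B_R$, each product $\eta\phi_n$ extends by zero to an element of $C_c^\infty(\overline{\R^N_+}\setminus K)$, and the product rule together with $H^1(B_R^+)$ convergence of $\phi_n$ gives $\eta\phi_n\to\eta f$ in $\DC$. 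For the outer piece, I choose $\psi_n\in C_c^\infty(\overline{\R^N_+})$ with $\psi_n\to f$ in $\DC$; since $(1-\eta)$ vanishes in a neighborhood of $K\subseteq B_r$, the products $(1-\eta)\psi_n$ belong to $C_c^\infty(\overline{\R^N_+}\setminus K)$. Expanding $\nabla((1-\eta)\psi_n)=(1-\eta)\nabla\psi_n-\psi_n\nabla\eta$, the first summand is controlled by $\DC$ convergence of $\psi_n$, while the second, supported in the bounded annulus $\supp\nabla\eta$, is controlled by Sobolev$+$H\"older on that bounded set; hence $(1-\eta)\psi_n\to(1-\eta)f$ in $\DC$. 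Summing the two approximations proves $f\in\DCK$.

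For the converse, I would start from a sequence $\phi_n\in C_c^\infty(\overline{\R^N_+}\setminus K)$ with $\phi_n\to f$ in $\DC$, and simply restrict to $\overline{B_R^+}$. Since $\overline{B_R^+}$ is compact, each restriction lies in $C_c^\infty(\overline{B_R^+}\setminus K)$ automatically. Gradient convergence in $L^2(B_R^+)$ is immediate from $\DC$ convergence, while $L^2(B_R^+)$ convergence of the $\phi_n$ themselves follows from Sobolev together with H\"older on the bounded set $B_R^+$. Therefore $f\restr{B_R^+}$ lies in the $H^1(B_R^+)$-closure of $C_c^\infty(\overline{B_R^+}\setminus K)$, i.e., in $\HO{B_R^+}{K}$. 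There is no genuine obstacle here: the entire argument is routine, and the only point deserving attention is the handling of the terms involving $\nabla\eta$ in the forward direction, which is precisely where the Sobolev embedding (hence the assumption $N\geq 3$) is used.
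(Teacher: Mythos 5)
Your argument is correct and supplies precisely the classical cutoff proof that the paper chooses to omit (the paper says only ``whose proof is classical'' and gives no details). The decomposition $f=\eta f+(1-\eta)f$ with $\eta\equiv 1$ on $B_r\supset K$ and $\supp\eta\Subset B_R$ is the standard device, and you handle the only delicate point correctly: the lower-order term $\psi_n\nabla\eta$ is controlled via the Sobolev embedding $\DC\hookrightarrow L^{2^*}(\R^N_+)$ (valid for $N\geq 3$, which is in force) combined with H\"older on the bounded set $\supp\nabla\eta$, since $\DC$-convergence alone gives no $L^2$ control of the function values. The converse is indeed a matter of restricting the approximating sequence and using the same Sobolev-plus-H\"older upgrade on $B_R^+$.

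One very small remark on phrasing: in the converse you justify $\phi_n\restr{\overline{B_R^+}}\in C_c^\infty(\overline{B_R^+}\setminus K)$ by appealing to compactness of $\overline{B_R^+}$, but the relevant fact is simply that $\supp\phi_n\cap\overline{B_R^+}$ is a compact set disjoint from $K$, hence compactly contained in $\overline{B_R^+}\setminus K$. This does not affect the correctness of the argument.
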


In the lemma below we compare the two notions of capacity arising in our work, namely Definition \ref{def:capacity} and Definition \ref{def:rel_capacity}.

\begin{lemma}[Comparison of capacities]\label{lemma:comparison_capa}
	Let $K\sub \overline{\R^N_+}$ be a compact set and let $R>r(K)$. Then there exists a constant $\alpha=\alpha(R)>0$ such that
	\[
	\alpha^{-1}\,\mathrm{cap}_{\overline{\R^N_+}}(K)\leq \Capa_{\overline{B_R^+}}(K)\leq \alpha\,\mathrm{cap}_{\overline{\R^N_+}}(K).
	\]
\end{lemma}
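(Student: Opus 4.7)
The plan is to construct, for each direction of the inequality, an admissible competitor for one capacity starting from a near-minimizer of the other, with energy controlled up to a constant depending only on $R$ (and $N$, the cutoff, etc.).

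For the upper bound $\Capa_{\overline{B_R^+}}(K) \leq \alpha\,\mathrm{cap}_{\overline{\R^N_+}}(K)$, I would first fix $\eta_K \in C_c^\infty(\overline{\R^N_+})$ with $\eta_K \equiv 1$ in a neighbourhood of $K$ and $\supp\eta_K \subset B_R^+$ (possible since $R > r(K)$). Picking an almost-minimizer $u \in \mathcal{D}^{1,2}(\overline{\R^N_+})$ of $\mathrm{cap}_{\overline{\R^N_+}}(K)$, I would show that the restriction $u|_{B_R^+}$ is admissible for $\Capa_{\overline{B_R^+}}(K)$. Indeed, by Lemma \ref{lemma:H_implies_D}, $(u-\eta_K)|_{B_R^+} \in \HO{B_R^+}{K}$; on the other hand $(\eta_K - 1)|_{B_R^+}$ is a smooth function on $\overline{B_R^+}$ with support disjoint from $K$, hence trivially in $\HO{B_R^+}{K}$. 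Summing gives $u|_{B_R^+} - 1 \in \HO{B_R^+}{K}$. To control $\int_{B_R^+}u^2$, I would invoke the Sobolev embedding $\mathcal{D}^{1,2}(\overline{\R^N_+}) \hookrightarrow L^{2^*}(\R^N_+)$ (valid for $N \geq 3$) combined with H\"older's inequality on the bounded set $B_R^+$, obtaining $\int_{B_R^+}u^2 \leq C(N,R)\int_{\R^N_+}|\nabla u|^2$. This yields $\Capa_{\overline{B_R^+}}(K) \leq (1 + C(N,R))\,\mathrm{cap}_{\overline{\R^N_+}}(K)$ after passing to the infimum.

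For the lower bound $\mathrm{cap}_{\overline{\R^N_+}}(K) \leq \alpha\,\Capa_{\overline{B_R^+}}(K)$, I would fix $\eta_K$ as before and additionally choose a cutoff $\zeta \in C_c^\infty(\overline{\R^N_+})$ with $\zeta \equiv 1$ on $\supp\eta_K$ and $\supp\zeta$ compact in $B_R^+$. Picking an almost-minimizer $v \in H^1(B_R^+)$ of $\Capa_{\overline{B_R^+}}(K)$, the natural candidate competitor is $u := v\zeta$, extended by zero to all of $\R^N_+$; this lies in $H^1(\R^N_+) \subset \mathcal{D}^{1,2}(\overline{\R^N_+})$ because $\zeta$ has compact support in $B_R^+$. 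Writing $u - \eta_K = \zeta(v-1) + (\zeta - \eta_K)$, the first summand has compact support in $B_R^+$ and its restriction belongs to $\HO{B_R^+}{K}$, so by Lemma \ref{lemma:H_implies_D} it belongs to $\mathcal{D}^{1,2}(\overline{\R^N_+}\setminus K)$; the second summand is explicitly in $C_c^\infty(\overline{\R^N_+}\setminus K)$. Hence $u$ is admissible, and Leibniz combined with the elementary estimate $|\nabla(v\zeta)|^2 \leq 2\zeta^2|\nabla v|^2 + 2v^2|\nabla\zeta|^2$ yields $\int_{\R^N_+}|\nabla u|^2 \leq C(\zeta)\, q(v)$, which gives the lower bound after taking the infimum in $v$.

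The only delicate points are (i) the correct interplay between the full-space Dirichlet-type space $\mathcal{D}^{1,2}(\overline{\R^N_+}\setminus K)$ and the local mixed space $\HO{B_R^+}{K}$, which is exactly what Lemma \ref{lemma:H_implies_D} handles in both directions, and (ii) the need to control the $L^2$-norm of the full-space test function on the ball in the first step, for which the Sobolev embedding (and thus the standing assumption $N \geq 3$) is essential. The final constant $\alpha$ can be taken to depend only on $N$, $R$, and on the fixed choice of cutoff $\zeta$.
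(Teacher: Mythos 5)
Your proof is correct, and both inequalities are established by constructing admissible competitors, exactly as in the paper. The direction $\Capa_{\overline{B_R^+}}(K)\leq\alpha\,\mathrm{cap}_{\overline{\R^N_+}}(K)$ is essentially identical to the paper's: restrict the half-space potential to $B_R^+$, verify admissibility through Lemma \ref{lemma:H_implies_D} (your explicit decomposition $u-1=(u-\eta_K)+(\eta_K-1)$ makes precise what the paper states tersely), and control the $L^2$ norm via the Sobolev embedding $\mathcal{D}^{1,2}(\overline{\R^N_+})\hookrightarrow L^{2^*}$. For the other direction you depart from the paper: you multiply a near-minimizer $v$ for $\Capa_{\overline{B_R^+}}(K)$ by a cutoff $\zeta$ and extend by zero, whereas the paper invokes a Sobolev extension operator $H^1(B_R^+)\to H^1(\R^N_+)$ and uses its operator norm as the constant. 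Both routes are valid; yours is more elementary in that it avoids quoting the extension theorem on a Lipschitz half-ball, at the modest cost of introducing an extra cutoff $\zeta$ and an explicit constant depending on $\|\nabla\zeta\|_\infty$. A small notational imprecision: since $K$ may touch $\partial\R^N_+$, the cutoffs $\eta_K$ and $\zeta$ live in $C_c^\infty(\overline{\R^N_+})$ and their supports should be required to lie in $B_R\cap\overline{\R^N_+}$ (away from the spherical part $S_R^+$), not in the open set $B_R^+$; this is clearly what you intend and does not affect the argument.
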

\begin{proof}
	Let $\eta_K\in C_c^\infty(\R^N)$ be such that $\eta_K=1$ in a neighbourhood of $K$.

 In order to prove the left hand inequality, let $W_R\in H^1(B_R^+)$ be the potential achieving $\Capa_{\overline{B_R^+}}(K)$ and let $\hat{W}_R\in H^1(\R^N_+)$ be its extension to $\R^N_+$. Obviously $\hat{W}_R\in \mathcal{D}^{1,2}(\overline{\R^N_+})$ and, by Lemma \ref{lemma:H_implies_D}, we have that $\hat{W}_R-\eta_K\in\mathcal{D}^{1,2}(\overline{\R^N_+}\setminus K)$. Therefore $\hat{W}_R$ is admissible for $\mathrm{cap}_{\overline{\R^N_+}}(K)$ and hence
\begin{align*}
	\mathrm{cap}_{\overline{\R^N_+}}(K)&\leq \int_{\R^N_+}|\nabla \hat{W}_R|^2\dx\leq \int_{\R^N_+}( |\nabla \hat{W}_R  |^2+\hat{W}_R^2 )\dx \\
	&\leq C_1(R)\int_{B_R^+}( \abs{\nabla W_R}^2+W_R^2 )\dx 
=C_1(R)\Capa_{\overline{B_R^+}}(K),
	\end{align*}
where $C_1(R)$ is the constant related to the extension operator for Sobolev functions.
 In order to prove the other inequality, let $W_K\in \mathcal{D}^{1,2}(\overline{\R^N_+})$ be the potential achieving $\mathrm{cap}_{\overline{\R^N_+}}(K)$. Since $W_K-\eta_K\in \mathcal{D}^{1,2}(\overline{\R^N_+}\setminus K)$, then $W_K\in H^1(B_R^+)$ and, in view of Lemma \ref{lemma:H_implies_D}, $W_K-1\in \HO{B_R^+}{K}$. Hence $W_K$ is admissible for $\Capa_{\overline{B_R^+}}(K)$. 
 Moreover, by H\"older and Sobolev inequalities, we have that
	\[
	\norm{W_K}_{L^2(B_R^+)}^2\leq  
\|W_K\|_{L^{2^*}(B_R^+)}^2|B_R^+|^{2/N}\leq 
C_2(R)\norm{\nabla W_K}_{L^{2}(\R^N_+)}^2,
	\]
	for some $C_2(R)>0$ depending only on $N$ and $R$. Then the following estimates hold
\begin{align*}
	\Capa_{\overline{B_R^+}}(K)\leq \int_{B_R^+}(\abs{\nabla W_K}^2+W_K^2  )\dx \leq (1+C_2(R))\,\mathrm{cap}_{\overline{\R^N_+}}(K),
	\end{align*} 
thus concluding the proof.
\end{proof}

In order to prove Theorem \ref{thm:blow_up} the following Poincar\'e type inequality is needed.

\begin{lemma}[Poincar\'e Inequality]\label{lemma:poincare}
	Let $M,K\sub \overline{\R^N_+}$ and 
$\{K_\epsilon\}_{\epsilon\in(0,1)}$ satisfy \eqref{eq:hp_blow_up_1_intr}--\eqref{eq:hp_blow_up_2_intr} 
for some $\Phi\in \mathcal C$ and let 
$\tilde{K}_\epsilon:=\Phi(K_\epsilon)/\epsilon$. Let us assume that $\mathrm{cap}_{\overline{\R^N_+}}(K)>0$. For any $R>r(M)$ there exist $\epsilon_0\in(0,1)$ and $C>0$ (both depending on
 $R$ and $K$) such that
	\[
	\int_{B_R^+}u^2\dx\leq C\int_{B_R^+}\abs{\nabla u}^2\dx
	\]
	for all $u\in \HO{B_R^+}{\tilde{K}_\epsilon}$ and for all $\epsilon<\epsilon_0$.
\end{lemma}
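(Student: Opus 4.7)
The plan is to argue by contradiction using the Mosco convergence hypothesis \eqref{eq:hp_blow_up_2_intr} together with the comparison between the two notions of capacity established in Lemma \ref{lemma:comparison_capa}. Suppose the conclusion fails. Then one can find a sequence $\epsilon_n\to 0^+$ and functions $u_n\in \HO{B_R^+}{\tilde K_{\epsilon_n}}$ satisfying $\|u_n\|_{L^2(B_R^+)}=1$ and $\|\nabla u_n\|_{L^2(B_R^+)}^2\leq 1/n$.

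Since $\{u_n\}_n$ is bounded in $H^1(B_R^+)$, up to a subsequence it converges weakly in $H^1(B_R^+)$ to some $u\in H^1(B_R^+)$, and strongly in $L^2(B_R^+)$ by the Rellich--Kondrachov theorem. Strong $L^2$-convergence forces $\|u\|_{L^2(B_R^+)}=1$, and lower semicontinuity of the $L^2$-norm of the gradient gives $\nabla u\equiv 0$ in $B_R^+$. Since $B_R^+$ is connected, $u$ is a nonzero constant. On the other hand, by Remark \ref{rmk:equiv_mosco}, hypothesis \eqref{eq:hp_blow_up_2_intr} is equivalent to the Mosco convergence of $\HO{B_R^+}{\tilde K_\epsilon}$ to $\HO{B_R^+}{K}$ (for $R>r(M)$), whose property (1) ensures that the weak limit $u$ belongs to $\HO{B_R^+}{K}$. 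Thus the constant function $u\equiv c$, with $c\neq 0$, lies in $\HO{B_R^+}{K}$.

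The final step is to deduce a contradiction with $\mathrm{cap}_{\overline{\R^N_+}}(K)>0$. Since $\HO{B_R^+}{K}$ is a vector space containing $u\equiv c$ with $c\neq 0$, it also contains the constant function $1$. Hence $v\equiv 0$ satisfies $v-1\in \HO{B_R^+}{K}$ and is therefore admissible in the definition of $\Capa_{\overline{B_R^+}}(K)$, giving $\Capa_{\overline{B_R^+}}(K)=0$. Lemma \ref{lemma:comparison_capa} (applied with $R>r(K)$, which is permitted since $K\sub M\sub \overline{B_R}$) then yields $\mathrm{cap}_{\overline{\R^N_+}}(K)=0$, contradicting our hypothesis.

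The main (minor) obstacle I expect is being careful about the role of $R$: Lemma \ref{lemma:comparison_capa} requires $R>r(K)$, which is compatible with the standing assumption $R>r(M)\geq r(K)$ coming from \eqref{eq:hp_blow_up_1_intr} after passing to the limit. Everything else is a standard weak compactness plus Mosco-convergence argument.
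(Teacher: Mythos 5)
Your argument is correct and is essentially the same as the paper's: contradiction, normalize, extract a weak $H^1$ limit that must be a nonzero constant, use Mosco convergence (via Remark \ref{rmk:equiv_mosco}) to place this constant in $\HO{B_R^+}{K}$, and then contradict $\mathrm{cap}_{\overline{\R^N_+}}(K)>0$ through Lemma \ref{lemma:comparison_capa}. The only (cosmetic) difference is in the last step: the paper builds an explicit sequence $z_n$ with $z_n-1\in\HO{B_R^+}{K}$ and $\|z_n\|_{H^1}\to0$ to force $\Capa_{\overline{B_R^+}}(K)=0$, whereas you observe directly that $1\in\HO{B_R^+}{K}$ by linearity and hence $0$ is an admissible competitor — a slightly cleaner phrasing of the same fact.
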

\begin{proof}
 By way of contradiction, suppose that, for a certain $R>r(M)$, there exist a sequence of real numbers $\epsilon_n\to 0^+$ and a sequence of functions $u_n\in \HO{B_R^+}{\tilde{K}_{\epsilon_n}}$ such that
	\[
	\int_{B_R^+}u_n^2\dx>n\int_{B_R^+}\abs{\nabla u_n}^2\dx.
	\]
	Now let us consider the sequence
	\[
	v_n:=\frac{u_n}{\norm{u_n}_{L^2(B_R^+)}},
	\]
	so that
	\[
	\norm{v_n}_{L^2(B_R^+)}=1\quad\text{and}\quad
 \int_{B_R^+}\abs{\nabla v_n}^2\dx<\frac{1}{n}.
	\]
	Therefore, since $\norm{v_n}_{H^1(B_R^+)}$ is uniformly bounded with respect to $n$, there exists $v\in H^1(B_R^+)$ such that, up to a subsequence, $v_n\rightharpoonup v$ weakly in $H^1(B_R^+)$. By compactness $v_n\to v$ strongly in $L^2(B_R^+)$; this implies that $\norm{v}_{L^2(B_R^+)}=1$ and hence that $v\not\equiv 0$. On the other hand, by weak lower semicontinuity, 
	\[
	\int_{B_R^+}\abs{\nabla v}^2\dx \leq \liminf_{n\to\infty}\int_{B_R^+}\abs{\nabla v_n}^2\dx=0,
	\]
	hence there exists a constant $\kappa\neq 0$ such that $ v=\kappa$ a.e. in $B_R^+$.
	Finally, since $\R^N\setminus\tilde{K}_{\epsilon_n}$ is converging to $\R^N\setminus K$ in the sense of Mosco, $v\in \HO{B_R^+}{K}$ (see Remark \ref{rmk:equiv_mosco}) and  this implies the existence of a sequence $\{w_n\}_n\subset C^\infty_c(\overline{B_R^+}\setminus K)$ such that $\norm{w_n-\kappa}_{H^1(B_R^+)} \to0$ as $n\to+\infty$. Letting $z_n=(\kappa-w_n)/\kappa$, we have that
	\[
	z_n -1 \in \HO{B_R^+}{K} \quad\text{and}\quad \norm{z_n}_{H^1(B_R^+)}\to0
	\]
	as $n\to+\infty$, thus implying 
 $\Capa_{\overline{B_R^+}}(K)=0$ and hence
contradicting, in view of Lemma \ref{lemma:comparison_capa}, the fact that $\mathrm{cap}_{\overline{\R^N_+}}(K)>0$.
\end{proof}

In the same spirit of Proposition \ref{prop:def_V_K_f}, we have that the relative $\psi_\gamma$-capacity of the set $K$, denoted by $\mathrm{cap}_{\overline{\R^N_+}}(K,\psi_\gamma)$ (see Definition \ref{def:rel_capacity}), is uniquely achieved, as asserted in the following lemma.

\begin{lemma}\label{lemma:limit_pro} 
Let $\eta\in C_c^\infty(\overline{\R^N_+})$ be a cut-off function such that $\eta=1$ in a neighbourhood of $K$. There exists a unique $\tilde{V}\in\DC$ such that 
$\tilde{V}-\eta\psi_\gamma\in \DCK$ and
	\[
	\int_{\R^N_+}\nabla\tilde{V}\cdot\nabla\phi\dx=0\quad\text{for all }\phi\in\DCK,
	\]
i.e. weakly solving
	\[
	\begin{bvp}
	-\Delta \tilde{V} & =0, & & \text{in }\R^N_+\setminus K,\\
	\tilde{V} & =\psi_\gamma, & & \text{on }K,\\
	\frac{\partial\tilde{V}}{\partial\nnu} & =0, & & \text{on }\partial\R^N_+\setminus K.
	\end{bvp}
	\]
Moreover
	\[
	\mathrm{cap}_{\overline{\R^N_+}}(K,\psi_\gamma)=\int_{\R^N_+}|\nabla \tilde{V}|^2\dx
	\]
and $\tilde{V}$ does not depend on the choice of the cut-off function $\eta$.
\end{lemma}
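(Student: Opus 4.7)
The plan is to view this as a standard Hilbert-space projection argument: minimize the Dirichlet energy $u\mapsto\int_{\R^N_+}|\nabla u|^2\dx$ over the closed affine class
\[
\mathcal A_\eta:=\{u\in\DC:u-\eta\psi_\gamma\in\DCK\}=\eta\psi_\gamma+\DCK.
\]
First I would check that $\mathcal A_\eta$ is a nonempty closed convex subset of the Hilbert space $(\DC,\int\nabla\cdot\nabla\cdot\dx)$, which is a genuine Hilbert space because $N\geq 3$ makes the Sobolev embedding $\DC\hookrightarrow L^{2^*}(\R^N_+)$ available, so $\|\nabla\cdot\|_{L^2}$ is a norm. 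Nonemptiness comes from observing that $\psi_\gamma$ is a homogeneous polynomial of degree $\gamma$, hence smooth, so $\eta\psi_\gamma\in C_c^\infty(\overline{\R^N_+})\subset\DC$; closedness is immediate from the fact that $\DCK$ is, by definition, a closed subspace.

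Next, existence and uniqueness of $\tilde V$ follow from the projection theorem: $\tilde V$ is the element of minimal norm in $\mathcal A_\eta$, equivalently the orthogonal projection of $0$ onto this closed affine subspace. If one prefers a direct-method proof, one takes a minimizing sequence $\{u_n\}\subset\mathcal A_\eta$ (automatically bounded in $\DC$), extracts a weakly convergent subsequence whose limit lies in $\mathcal A_\eta$ by weak closedness, and concludes by weak lower semicontinuity of the $\DC$-norm. Strict convexity of the quadratic functional yields uniqueness. By construction $\int_{\R^N_+}|\nabla\tilde V|^2\dx=\mathrm{cap}_{\overline{\R^N_+}}(K,\psi_\gamma)$.

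To obtain the Euler-Lagrange equation, I would test with perturbations $\tilde V+t\phi$ for $\phi\in\DCK$ and $t\in\R$ (note that $\tilde V+t\phi\in\mathcal A_\eta$), and then impose the first-order optimality at $t=0$, which gives
\[
\int_{\R^N_+}\nabla\tilde V\cdot\nabla\phi\dx=0\quad\text{for all }\phi\in\DCK.
\]
This is the weak formulation of the mixed boundary value problem: testing against $C_c^\infty(\R^N_+\setminus K)$ yields $-\Delta\tilde V=0$ in $\R^N_+\setminus K$; the Dirichlet condition $\tilde V=\psi_\gamma$ on $K$ is encoded in the admissibility constraint (since $\eta\equiv 1$ near $K$); and the Neumann condition on $\partial\R^N_+\setminus K$ emerges by testing against functions $\phi\in\DCK$ that do not vanish on this portion of the boundary.

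For the independence of $\eta$, if $\eta_1,\eta_2$ both equal $1$ in a neighborhood of $K$, then $(\eta_1-\eta_2)\psi_\gamma\in C_c^\infty(\overline{\R^N_+}\setminus K)\subset\DCK$, so $\mathcal A_{\eta_1}=\mathcal A_{\eta_2}$ as affine sets, and the unique minimizer is the same. I do not expect a genuine obstacle here: the argument is textbook once one has set up the right Hilbert structure, and the only delicate preliminary check---that $\eta\psi_\gamma\in\DC$ and that the $L^2$-gradient norm is indeed a norm on $\DC$---is entirely standard for $N\geq 3$.
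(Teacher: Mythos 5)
The paper itself does not include a proof of this lemma (it explicitly appeals to the "same spirit" as Proposition \ref{prop:def_V_K_f}, whose proof is omitted as "the classical one"), so the expected argument is precisely the standard projection/direct-method argument you describe. Your proposal is correct: the Hilbert structure on $\DC$ is legitimate for $N\geq3$ via the Sobolev inequality, $\eta\psi_\gamma\in C^\infty_c(\overline{\R^N_+})$ because $\psi_\gamma$ is a (harmonic) polynomial, the affine class $\eta\psi_\gamma+\DCK$ is closed and nonempty, the projection theorem gives a unique minimizer whose energy equals $\mathrm{cap}_{\overline{\R^N_+}}(K,\psi_\gamma)$ by the extended Definition \ref{def:rel_capacity}, variation over $t\mapsto\tilde V+t\phi$ yields the weak equation, and the observation that $(\eta_1-\eta_2)\psi_\gamma\in C_c^\infty(\overline{\R^N_+}\setminus K)\subset\DCK$ shows that the affine class (hence both the minimizer and the value) is independent of $\eta$. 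This matches the paper's intended approach.
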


Since we are in the case $N\geq 3$, the following Hardy-type inequality on half balls holds.

\begin{lemma}[Hardy-type inequality]\label{lemma:hardy}
For all $R>0$ and $u\in H^1(B_R^+)$
	\[
	\frac{N-2}{2}\int_{B_R^+}\frac{u^2}{\abs{x}^2}\dx\leq \frac{N+1}{R^2}\int_{B_R^+}u^2\dx+\frac{N}{N-2}\int_{B_R^+}\abs{\nabla u}^2\dx.
	\]
	\end{lemma}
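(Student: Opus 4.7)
The plan is to combine two integration by parts identities on $B_R^+$, crucially exploiting the fact that the flat part $B_R'$ of the boundary contributes no boundary terms in either one. Indeed, on $B_R'$ the outward unit normal is $\nnu=-\bm{e}_N$ and $x_N=0$, so $x\cdot\nnu=0$ on $B_R'$. By a density argument it suffices to establish the inequality for $u\in C^\infty(\overline{B_R^+})$.

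First I would apply the divergence theorem to the vector field $V(x)=x/\abs{x}^2$, whose divergence is $(N-2)/\abs{x}^2$. Working on $B_R^+\setminus B_\epsilon^+$ to avoid the singularity at the origin, and letting $\epsilon\to 0^+$ (the surface integral on $S_\epsilon^+$ is $O(\epsilon^{N-2})$ for smooth $u$, since $N\geq 3$), one arrives at
\[
(N-2)\int_{B_R^+}\frac{u^2}{\abs{x}^2}\dx
=\frac{1}{R}\int_{S_R^+}u^2\ds-2\int_{B_R^+}\frac{u\,(x\cdot\nabla u)}{\abs{x}^2}\dx.
\]
Applying Young's inequality $2\abs{a}\abs{b}\leq t a^2+t^{-1}b^2$ with $t=(N-2)/2$ to the last integrand yields
\[
\frac{N-2}{2}\int_{B_R^+}\frac{u^2}{\abs{x}^2}\dx
\leq \frac{2}{N-2}\int_{B_R^+}\abs{\nabla u}^2\dx+\frac{1}{R}\int_{S_R^+}u^2\ds.
\]

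Next I would control the surface integral using a second divergence-theorem computation, now with the field $x$ (whose divergence is $N$). Since $x\cdot\nnu=0$ on $B_R'$ and $x\cdot\nnu=R$ on $S_R^+$,
\[
R\int_{S_R^+}u^2\ds=\int_{B_R^+}\dive(u^2x)\dx
=\int_{B_R^+}\bigl(Nu^2+2u\,(x\cdot\nabla u)\bigr)\dx.
\]
Using the pointwise bound $2u(x\cdot\nabla u)\leq u^2+\abs{x}^2\abs{\nabla u}^2\leq u^2+R^2\abs{\nabla u}^2$ on $B_R^+$, I divide through by $R^2$ and obtain
\[
\frac{1}{R}\int_{S_R^+}u^2\ds\leq \frac{N+1}{R^2}\int_{B_R^+}u^2\dx+\int_{B_R^+}\abs{\nabla u}^2\dx.
\]

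Substituting the second estimate into the first and using $\frac{2}{N-2}+1=\frac{N}{N-2}$ gives the claim. The only mildly delicate point is justifying the integration by parts at the singularity of $x/\abs{x}^2$, which is why the hypothesis $N\geq 3$ enters (and why the lemma fails in dimension two, where one instead needs a logarithmic weight, as remarked in the introduction).
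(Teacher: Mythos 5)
Your proof is correct and follows essentially the same approach as the paper: both integrate the identities $\dive(u^2\,x/|x|^2)=(N-2)u^2/|x|^2+2u\,x\cdot\nabla u/|x|^2$ and $\dive(u^2x)=Nu^2+2u\,x\cdot\nabla u$ over $B_R^+$, use $x\cdot\nnu=0$ on the flat part of the boundary, and apply the same Young-type absorption with parameter $(N-2)/2$. The only (minor) difference is that you make the truncation $B_R^+\setminus B_\epsilon^+$ and the passage $\epsilon\to 0^+$ explicit, which the paper leaves implicit.
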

\begin{proof}
	By integrating over $B_R^+$ the identity
	\[
	\dive \left(u^2\frac{x}{\abs{x}^2}\right)=(N-2)\frac{u^2}{\abs{x}^2}+2u\nabla u\cdot\frac{x}{\abs{x}^2},
	\]
	we obtain that, for any $u\in C^\infty(\overline{B_R^+})$,
	\begin{align*}
	(N-2)\int_{B_R^+}\frac{u^2}{\abs{x}^2}\dx&=\int_{\partial B_R^+}u^2\frac{x\cdot \nnu}{\abs{x}^2}\ds-2\int_{B_R^+}u\nabla u\cdot \frac{x}{\abs{x}^2}\dx \\
	&\leq \frac{1}{R}\int_{S_R^+}u^2\ds+\frac{N-2}{2}\int_{B_R^+}\frac{u^2}{\abs{x}^2}\dx +\frac{2}{N-2}\int_{B_R^+}\abs{\nabla u}^2\dx,
	\end{align*}
	thanks to the fact that $x\cdot\nnu =0$ on $\{x_1=0\}\cap\partial B_R^+$ and $x=R\,\nnu$ on $S_R^+$.

 On the other hand, integrating over $B_R^+$ the identity
	\[
	\dive (u^2 x)=2u\nabla u\cdot x+Nu^2
	\]
	and arguing in a similar way, we deduce that
	\[
	\int_{S_R^+}u^2\ds\leq \frac{N+1}{R}\int_{B_R^+}u^2\dx+R\int_{B_R^+}\abs{\nabla u}^2\dx.
	\]
	Combining those two relations the lemma is proved.
\end{proof}

The following proposition provides a blow-up analysis for scaled capacitary potentials, which will be the core of the proof of Theorem \ref{thm:blow_up}.

\begin{proposition}\label{prop:blow_up}
Let $\left\{K_\epsilon\right\}_{\epsilon>0}\sub\overline{\Omega}$ be a family of compact sets concentrating at $\{0\}\sub\partial\Omega$ as $\epsilon\to 0$ and satisfying \eqref{eq:hp_blow_up_1_intr}-\eqref{eq:hp_blow_up_2_intr} for some $\Phi\in\mathcal{C}$ and  for some compact sets
$M,K\sub\overline{\R^N_+}$ with $\mathrm{cap}_{\overline{\R^N_+}}(K)>0$. 
Let $\phi_0$ be as in \eqref{eq:phi0} and 
let $\gamma$, $\psi_\gamma$ be as in \eqref{eq:phi_0_psi_gamma}-\eqref{eq:psi_gamma}.
Let $\tilde{V}_\epsilon$ be as in \eqref{eq:def_tilde_phi_V} and $\tilde{V}$ as in Lemma \ref{lemma:limit_pro}. Then
\begin{align*}
	\tilde{V}_\epsilon\rightharpoonup \tilde{V}\quad&\text{weakly in }H^1(B_R^+),\\
	A(\epsilon x)\nabla\tilde{V}_\epsilon(x) \rightharpoonup \nabla\tilde{V}(x)\quad&\text{weakly in }L^2(B_R^+),\\
	\epsilon^2\hat{c}(\epsilon x)\tilde{V}_\epsilon(x)\rightharpoonup 0\quad&\text{weakly in }L^2(B_R^+),
\end{align*}
	as $\epsilon\to 0$ for all $R>r(M)$, where
 $A$ and $\hat{c}$ are as in \eqref{eq:A_c_p} (with $\Phi$ instead of $\Phi_{\textup{AE}}$).
\end{proposition}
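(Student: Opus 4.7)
\medskip

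\textbf{Plan of proof.} The strategy is a standard blow-up scheme: derive uniform bounds on $\tilde V_\epsilon$ on every half-ball $B_R^+$, extract a weak limit, identify it with $\tilde V$ via the uniqueness result of Lemma \ref{lemma:limit_pro}, and use Urysohn's subsequence principle to upgrade convergence along a subsequence to convergence of the whole family.

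\emph{Step 1: a priori bounds.} Starting from Lemma \ref{lemma:big_O_cap}, which gives $\Capa_{\bar{\Omega},c}(K_\epsilon,\phi_0)=q(V_{K_\epsilon,\phi_0})=O(\epsilon^{N+2\gamma-2})$, I would perform the change of variables $x=\Phi^{-1}(\epsilon y)$ in $q(V_{K_\epsilon,\phi_0})$. Using the definitions in \eqref{eq:A_c_p} and the scaling \eqref{eq:def_tilde_phi_V} this yields
\[
\int_{B_{R_0/\epsilon}^+}\!\!A(\epsilon y)\nabla\tilde V_\epsilon\cdot\nabla\tilde V_\epsilon\,dy
+\epsilon^2\!\int_{B_{R_0/\epsilon}^+}\!\!\hat c(\epsilon y)\tilde V_\epsilon^2\,dy=O(1).
\]
The ellipticity bound \eqref{eq:4} and the positivity bound \eqref{eq:6} then give $\int_{B_{R_0/\epsilon}^+}|\nabla\tilde V_\epsilon|^2\le C$ and $\epsilon^2\int_{B_{R_0/\epsilon}^+}\tilde V_\epsilon^2\le C$. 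Next I fix $R>r(M)$ and control the $L^2$-norm on $B_R^+$ by writing $\tilde V_\epsilon=(\tilde V_\epsilon-\tilde\phi_\epsilon)+\tilde\phi_\epsilon$. Since $\tilde V_\epsilon-\tilde\phi_\epsilon\in\HO{B_R^+}{\tilde K_\epsilon}$, Lemma \ref{lemma:poincare} applies and, combined with the $H^1(B_R^+)$-convergence $\tilde\phi_\epsilon\to\psi_\gamma$ from Proposition \ref{prop:vanish_phi_0}, yields a uniform $H^1(B_R^+)$-bound on $\tilde V_\epsilon$.

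\emph{Step 2: extraction and identification of the weak limit.} By a diagonal argument on a sequence of radii $R_k\to+\infty$ with $R_k>r(M)$, together with Rellich compactness, there exists $\tilde V^*\in H^1_{\mathrm{loc}}(\overline{\R^N_+})$ such that, along a subsequence, $\tilde V_{\epsilon_n}\rightharpoonup\tilde V^*$ weakly in $H^1(B_R^+)$ and strongly in $L^2(B_R^+)$ for every $R>r(M)$. The global gradient bound of Step 1 together with weak lower semicontinuity yields $\nabla\tilde V^*\in L^2(\R^N_+)$, hence $\tilde V^*\in\mathcal{D}^{1,2}(\overline{\R^N_+})$. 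To check the boundary trace on $K$, observe that $\tilde V_\epsilon-\tilde\phi_\epsilon\in\HO{B_R^+}{\tilde K_\epsilon}$ converges weakly in $H^1(B_R^+)$ to $\tilde V^*-\psi_\gamma$; the Mosco convergence of $\HO{B_R^+}{\tilde K_\epsilon\cup S_R^+}$ to $\HO{B_R^+}{K\cup S_R^+}$ (Remark \ref{rmk:equiv_mosco}, property~(1), after multiplying by a cutoff that is $1$ on a smaller half-ball) gives $\tilde V^*-\psi_\gamma\in\HO{B_R^+}{K}$, which via Lemma \ref{lemma:H_implies_D} translates into $\tilde V^*-\eta_K\psi_\gamma\in\DCK$ for any cutoff $\eta_K$ equal to $1$ in a neighbourhood of $K$.

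\emph{Step 3: passing to the limit in the equation.} For a test function $\phi\in C_c^\infty(\overline{\R^N_+}\setminus K)$, fix $R>r(M)$ so that $\mathrm{supp}\,\phi\subset B_R^+$; by Mosco property~(2) of Remark \ref{rmk:equiv_mosco} (with outer boundary $S_R^+$) there exist $\phi_\epsilon\in\HO{B_R^+}{\tilde K_\epsilon\cup S_R^+}$ with $\phi_\epsilon\to\phi$ in $H^1(B_R^+)$. Extending by zero to $B_{R_0/\epsilon}^+$ makes $\phi_\epsilon$ admissible in \eqref{eq:blow_up_eq_V}. Splitting $A(\epsilon y)\nabla\phi_\epsilon=(A(\epsilon y)-I_N)\nabla\phi_\epsilon+\nabla\phi_\epsilon$ and using that $A(\epsilon\,\cdot\,)\to I_N$ uniformly on $B_R^+$, the first term converges to $0$ strongly in $L^2$, while the second converges strongly to $\nabla\phi$; combined with the weak convergence $\nabla\tilde V_\epsilon\rightharpoonup\nabla\tilde V^*$ this gives $\int A(\epsilon y)\nabla\tilde V_\epsilon\cdot\nabla\phi_\epsilon\to\int_{\R^N_+}\nabla\tilde V^*\cdot\nabla\phi$, while $\epsilon^2\int\hat c(\epsilon y)\tilde V_\epsilon\phi_\epsilon\to 0$ by the uniform $L^2$-bound on $\tilde V_\epsilon$ on $B_R^+$ and the $L^\infty$-bound on $\hat c$. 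Hence $\tilde V^*$ satisfies the variational equation of Lemma \ref{lemma:limit_pro} for all $\phi\in C_c^\infty(\overline{\R^N_+}\setminus K)$, and by density for all $\phi\in\DCK$. Uniqueness in Lemma \ref{lemma:limit_pro} gives $\tilde V^*=\tilde V$ and Urysohn's subsequence principle upgrades the convergence to the whole family. The second and third stated convergences then follow from $\nabla\tilde V_\epsilon\rightharpoonup\nabla\tilde V$ weakly in $L^2(B_R^+)$, the strong $L^\infty$-convergence $A(\epsilon\,\cdot\,)\to I_N$, and $\epsilon^2\tilde V_\epsilon\to 0$ strongly in $L^2(B_R^+)$ (since $\|\tilde V_\epsilon\|_{L^2(B_R^+)}$ is uniformly bounded).

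\emph{Main obstacles.} The delicate points are (i) obtaining the $L^2(B_R^+)$-bound on $\tilde V_\epsilon$, which is impossible via a direct Poincar\'e inequality and really requires splitting off $\tilde\phi_\epsilon$ so that the residual vanishes on $\tilde K_\epsilon$ (thereby using crucially the positive-capacity hypothesis on $K$ via Lemma \ref{lemma:poincare}); and (ii) correctly setting up the test function $\phi_\epsilon$ to lie in $\HO{B_{R_0/\epsilon}^+}{\tilde K_\epsilon\cup S_{R_0/\epsilon}^+}$, which forces one to use the equivalent Mosco convergence from Remark \ref{rmk:equiv_mosco} with the outer sphere $S_R^+$ included, rather than the original $H^1_0$-formulation on $\R^N$.
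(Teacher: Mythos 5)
Your overall plan closely tracks the paper's proof (a priori bounds, Poincar\'e inequality for $\tilde V_\epsilon-\tilde\phi_\epsilon$, diagonal extraction, passage to the limit in the equation via Mosco convergence, identification of the limit through Lemma \ref{lemma:limit_pro}, Urysohn). However, there is one genuine gap in Step~2.

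You assert that ``\emph{the global gradient bound of Step 1 together with weak lower semicontinuity yields $\nabla\tilde V^*\in L^2(\R^N_+)$, hence $\tilde V^*\in\mathcal{D}^{1,2}(\overline{\R^N_+})$}.'' This implication does not hold: having a square-integrable gradient is not enough to be in the Beppo Levi space $\mathcal{D}^{1,2}(\overline{\R^N_+})$. The constant function $1$ is a counterexample; it has $\nabla 1 = 0\in L^2(\R^N_+)$ but is not in $\mathcal{D}^{1,2}(\overline{\R^N_+})$. To place the weak limit $\tilde V^*$ in $\mathcal{D}^{1,2}(\overline{\R^N_+})$ one must additionally control its behaviour at infinity, e.g.\ show $\tilde V^*/|x|\in L^2(\R^N_+)$ (equivalently, $\tilde V^*\in L^{2^*}(\R^N_+)$). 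This is precisely why the paper introduces the Hardy-type inequality of Lemma \ref{lemma:hardy}: combining it with the \emph{global} estimates $\int_{B_{R_0/\epsilon}^+}|\nabla\tilde V_\epsilon|^2\dx = O(1)$ and $\epsilon^2\int_{B_{R_0/\epsilon}^+}\tilde V_\epsilon^2\dx = O(1)$ gives a bound on $\int_{B_R^+}|\tilde V_\epsilon|^2/|x|^2\dx$ that is uniform in both $\epsilon$ and $R$, whence $\int_{\R^N_+}|\tilde V^*|^2/|x|^2\dx<\infty$ by weak lower semicontinuity. Only after this does one know that $\tilde V^*\in\mathcal{D}^{1,2}(\overline{\R^N_+})$ and, in turn, can legitimately invoke Lemma \ref{lemma:H_implies_D} (which requires $f\in\mathcal{D}^{1,2}(\overline{\R^N_+})$ as a hypothesis) to get $\tilde V^*-\eta_K\psi_\gamma\in\DCK$. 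You should insert this Hardy-inequality step before attempting the identification with $\tilde V$; note it is also why the paper retains the seemingly crude bound $\epsilon^2\int_{B_{R_0/\epsilon}^+}\tilde V_\epsilon^2\dx = O(1)$ over the full scaled domain rather than only on fixed half-balls.

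Two minor points: (a) in Step~2 you only need $\tilde V_\epsilon-\tilde\phi_\epsilon\in\HO{B_R^+}{\tilde K_\epsilon}$ and the Mosco convergence of $\HO{B_R^+}{\tilde K_\epsilon}$ to $\HO{B_R^+}{K}$ (without the outer sphere $S_R^+$) to get $\tilde V^*-\psi_\gamma\in\HO{B_R^+}{K}$; the variant with $S_R^+$ is what you correctly use in Step~3 to build admissible test functions. (b) It is cleaner, as the paper does, to first subtract $\eta\tilde\phi_\epsilon$ with a fixed cutoff $\eta$ equal to $1$ near $M$, so that the difference lies in $\HO{B_R^+}{\tilde K_\epsilon}$ for all $R>r(M)$ at once.
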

\begin{proof}
	From Lemma \ref{lemma:L^2_norm} and Lemma \ref{lemma:big_O_cap} we have that
	\[
	\int_\Omega\abs{\nabla V_{K_\epsilon,\phi_0}}^2\dx=\Capa_{\bar{\Omega},c}(K_\epsilon,\phi_0)(1+o(1))=O(\epsilon^{N+2\gamma-2}),
	\]
	as $\epsilon\to 0$. On the other hand, 
letting $\mathcal U_0$ and $R_0$ be as in \eqref{eq:3},
by a change of variables we have that
	\[
	\int_\Omega\abs{\nabla V_{K_\epsilon,\phi_0}}^2\dx \geq 
	\int_{\Omega\cap \mathcal U_0}\abs{\nabla V_{K_\epsilon,\phi_0}}^2\dx
=\epsilon^{N+2\gamma-2}\int_{B_{R_0/\epsilon}^+}A(\epsilon x)\nabla\tilde{V}_\epsilon(x)\cdot\nabla\tilde{V}_\epsilon(x)\dx
	\]
	and so
	\begin{equation}\label{eq:V_tilde_9}
	\int_{B_{R_0/\epsilon}^+}A(\epsilon x)\nabla\tilde{V}_\epsilon(x)\cdot\nabla\tilde{V}_\epsilon(x)\dx=O(1)
	\end{equation}
	as $\epsilon\to 0$.
From \eqref{eq:V_tilde_9} and \eqref{eq:4} it
follows that
\begin{equation}\label{eq:5}
 \int_{B_{R_0/\epsilon}^+}|\nabla \tilde{V}_\epsilon(x)|^2\dx\leq
2 \int_{B_{R_0/\epsilon}^+}A(\epsilon x)\nabla \tilde{V}_\epsilon(x)\cdot \nabla \tilde{V}_\epsilon(x)\dx=O(1)\quad\text{as $\epsilon\to 0$}.
\end{equation}
On the other hand, in view of \eqref{eq:6}, 
\begin{align*}
  \Capa_{\bar{\Omega},c}(K_\epsilon,\phi_0)&\geq 
\int_{\Omega\cap \mathcal U_0}c(x)|V_{K_\epsilon,\phi_0}|^2\dx\\
&=\epsilon^{N+2\gamma}\int_{B_{R_0/\epsilon}^+}\hat{c}(\epsilon x)|\tilde{V}_\epsilon(x)|^2\dx\geq \frac{c_0}2\,\epsilon^{N+2\gamma}\int_{B_{R_0/\epsilon}^+}|\tilde{V}_\epsilon(x)|^2\dx
\end{align*}
so that from Lemma 
\ref{lemma:big_O_cap} we deduce that 
\begin{equation}\label{eq:7}
\epsilon^2\int_{B_{R_0/\epsilon}^+}|\tilde{V}_\epsilon(x)|^2\dx=O(1)
  \quad\text{as }\epsilon\to 0.
\end{equation}
From \eqref{eq:5} we deduce that there exists $C_1>0$ (not depending on $R$) such that, for
every $R>0$,
	\begin{equation}\label{eq:V_tilde_1}
	\int_{B_R^+}|\nabla\tilde{V}_\epsilon|^2\dx\leq C_1
\quad \text{for all }\epsilon\in(0,R_0/R).
	\end{equation}
	From the Poincar\'e inequality proved in Lemma \ref{lemma:poincare}, we deduce that, for every $R>r(M)$, there exist $C_2=C_2(R)>0$ and $\epsilon_{1,R}>0$ (both depending on $R$) such that, if $\epsilon\in(0,\epsilon_{1,R})$,
	\begin{align*}
	\int_{B_R^+}|\tilde{V}_\epsilon|^2\dx&\leq 2\int_{B_R^+}|\tilde{V}_\epsilon-\tilde{\phi}_\epsilon|^2\dx+2\int_{B_R^+}|\tilde{\phi}_\epsilon|^2\dx \\
	&\leq 2C_2\int_{B_R^+}|\nabla(\tilde{V}_\epsilon-\tilde{\phi}_\epsilon)|^2\dx+2\int_{B_R^+}|\tilde{\phi}_\epsilon|^2\dx\\
	&\leq 4C_2\int_{B_R^+}(|\nabla\tilde{V}_\epsilon|^2+|\nabla\tilde{\phi}_\epsilon|^2)\dx+2\int_{B_R^+}|\tilde{\phi}_\epsilon|^2\dx.
	\end{align*}
	Hence, from \eqref{eq:phi_0_psi_gamma} and \eqref{eq:V_tilde_1} we have that, for every $R>r(M)$,
 there exist $C_3=C_3(R)>0$ and $\epsilon_{2,R}>0$ (both depending on $R$)  such that, if $\epsilon\in(0,\epsilon_{2,R})$,
	\begin{equation}\label{eq:V_tilde_2}
	\int_{B_R^+}|\tilde{V}_\epsilon|^2\dx\leq C_3.
	\end{equation}
	Combining \eqref{eq:V_tilde_1} and \eqref{eq:V_tilde_2} with a diagonal process, we deduce that there exists $W\in H^1_{\textup{loc}}(\R^N_+)$ (not depending on $R$) such that, along a subsequence $\epsilon=\epsilon_n\to0^+$,
\begin{equation}\label{eq:V_tilde_3}
\begin{split}
  \tilde{V}_\epsilon\rightharpoonup W\quad&\text{weakly in }H^1(B_R^+), \\
  \tilde{V}_\epsilon\to W\quad&\text{strongly in }L^2(B_R^+),\\
  \tilde{V}_\epsilon\to W\quad&\text{a.e. in }\R^N_+,
\end{split}
\end{equation}
	for all $R>r(M)$. 
Since $c$ is bounded and  
$\|A(\epsilon x)-I_N\|_{{\mathcal M}_{N\times N}}\leq C\,R\,\epsilon$ for all $x\in B_{R}^+$, from \eqref{eq:V_tilde_3} it follows easily that  
	\begin{gather}
	A(\epsilon x)\nabla \tilde{V}_\epsilon(x)\rightharpoonup \nabla W(x)\quad\text{weakly in }L^2(B_R^+), \label{eq:V_tilde_4} \\
	\epsilon^2\hat{c}(\epsilon x)\tilde{V}_\epsilon(x)\rightharpoonup 0\quad\text{weakly in }L^2(B_R^+),\label{eq:V_tilde_5}
	\end{gather}
	as $\epsilon=\epsilon_n\to 0$, for all $R>r(M)$.

	Now let $\phi\in C_c^\infty(\overline{\R^N_+}\setminus K)$. Then there exists $R>0$ such that 
	\[
	\phi\in C_c^\infty(B_R^+\cup (B_R'\setminus K))\sub \HO{B_R^+}{K\cup S_R^+}.
	\]
	Since $\HO{B_R^+}{\tilde{K}_\epsilon\cup S_R^+}$ is converging to $\HO{B_R^+}{K\cup S_R^+}$ in the sense of Mosco (see hypothesis \eqref{eq:hp_blow_up_2_intr} and Remark \ref{rmk:equiv_mosco}), there exists a sequence $\psi_\epsilon\in \HO{B_R^+}{\tilde{K}_\epsilon\cup S_R^+}$ such that
	\begin{equation}\label{eq:V_tilde_6}
	\psi_\epsilon\to \phi\quad\text{strongly in }H^1(B_R^+),~\text{as }\epsilon\to 0.
	\end{equation}
	From the equation \eqref{eq:blow_up_eq_V} satisfied by $\tilde{V}_\epsilon$ we know that, for all $\epsilon\in (0,R_0/R)$ (so that $B_R^+\subset B_{R_0/\epsilon}^+$)
	\[
	\int_{B_R^+}(A(\epsilon x)\nabla\tilde{V}_\epsilon(x)\cdot\nabla\psi_\epsilon(x)+\epsilon^2\hat{c}(\epsilon x)\tilde{V}_\epsilon(x)\psi_\epsilon(x))\dx=0.
	\]
	Therefore, combining \eqref{eq:V_tilde_4},\eqref{eq:V_tilde_5} and \eqref{eq:V_tilde_6} we obtain that
	\[
	\int_{B_R^+}\nabla W\cdot\nabla \phi\dx=0.
	\]
	Summing up we have that
	\begin{equation}\label{eq:V_tilde_11}
	\int_{\R^N_+}\nabla W\cdot\nabla \phi\dx=0\quad\text{for all }\phi\in C_c^\infty(\overline{\R^N_+}\setminus K).
	\end{equation}
	By weak lower semicontinuity and \eqref{eq:V_tilde_1} we have that
	\begin{equation}\label{eq:V_tilde_10}
	\int_{\R^N_+}|\nabla W|^2\dx<\infty.
	\end{equation}
Now let $R>r(M)$ and $\epsilon<R_0/R$. Thanks to Lemma \ref{lemma:hardy} and estimates
\eqref{eq:5} and \eqref{eq:7}  we have that
	\begin{align*}
\int_{B_R^+}\frac{|\tilde{V}_\epsilon|^2}{\abs{x}^2}\dx&\leq\int_{B_{R_0/\epsilon}^+}\frac{|\tilde{V}_\epsilon|^2}{\abs{x}^2}\dx\\
&\leq 
\frac{2(N+1)}{(N-2)R_0^2}\,\epsilon^2\int_{B_{R_0/\epsilon}^+}\tilde{V}_\epsilon^2 \dx+
\frac{2N}{(N-2)^2}\int_{B_{R_0/\epsilon}^+}
|\nabla\tilde{V}_\epsilon|^2\dx\leq C_4,
\end{align*}
for a certain $C_4>0$ not depending on $\epsilon$ and $R$.
By weak lower semicontinuity we deduce that
$\int_{B_R^+}\frac{|W|^2}{\abs{x}^2}\dx\leq C_4$ for all $R>r(M)$, hence
	\[
	\int_{\R^N_+}\frac{\abs{W}^2}{\abs{x}^2}\dx<\infty.
	\]
	Thanks to this and to \eqref{eq:V_tilde_10}, we have that $W\in\mathcal{D}^{1,2}(\overline{\R^N_+})$; moreover, by density of $C_c^\infty(\overline{\R^N_+}\setminus K)$ in $\mathcal{D}^{1,2}(\overline{\R^N_+}\setminus K)$ we have that \eqref{eq:V_tilde_11} holds for any $\phi\in\mathcal{D}^{1,2}(\overline{\R^N_+}\setminus K)$. 

Let $\eta\in C_c^\infty(\overline{\R^N_+})$ be such that $\eta=1$ in a neighbourhood of $M$: since, for every $R>r(M)$,   $\tilde{V}_\epsilon-\eta\tilde{\phi}_\epsilon\in \HO{B_R^+}{\tilde{K}_\epsilon}$ and since $\HO{B_R^+}{\tilde{K}_\epsilon}$ is converging to $\HO{B_R^+}{K}$ in the sense of Mosco, then, passing to the weak limit, there holds $W-\eta\psi_\gamma\in \HO{B_R^+}{K}$ (see \eqref{eq:phi_0_psi_gamma}). Hence, in view of Lemma \ref{lemma:H_implies_D}, we have that $W-\eta\psi_\gamma\in \mathcal{D}^{1,2}(\overline{\R^N_+}\setminus K)$. Combining this fact with \eqref{eq:V_tilde_11}, by uniqueness (stated in Lemma \ref{lemma:limit_pro}) we have that $W=\tilde{V}$ and, by Urysohn's Subsequence Principle, we conclude that the convergences
\eqref{eq:V_tilde_3},\eqref{eq:V_tilde_4}, and\eqref{eq:V_tilde_5}
 hold as $\epsilon\to 0$ (not only along a sequence $\epsilon_n\to0^+$). 
\end{proof}

Now we are ready for the proof of our second main result.

\begin{proof}[Proof of Theorem \ref{thm:blow_up}]
	Let $R>r(M)$ and let $\eta\in C_c^\infty(\R^N)$ be such that $\eta=1$ in $B_R$. Also let $\tilde{R}>0$ be such that $\supp \eta\sub B_{\tilde{R}}$.
For $\epsilon>0$ small we define
\[
\eta_\epsilon(x)=
\begin{cases}
  \eta\left(\frac1\epsilon\Phi(x)\right),&\text{if }x\in\mathcal U_0,\\
0 ,&\text{if }x\in\R^N\setminus\mathcal U_0,
\end{cases}
\]
and observe that, if $\epsilon$ is sufficiently small, $\eta_\epsilon\in C^1_c(\R^N)$ and $\eta_\epsilon\equiv 1$ in a neighbourhood  of $K_\epsilon$. 
 Testing equation \eqref{eq:weak_V_K_f} with $V_{K_\epsilon,\varphi_0}-\varphi_0\eta_\epsilon$ leads to
	\begin{align*}
\Capa_{\bar{\Omega},c}(K_\epsilon,\varphi_0)&=
			\int_{\Omega}\left(|\nabla V_{K_\epsilon,\varphi_0}|^2+c
|V_{K_\epsilon,\varphi_0}|^2\right)\dx\\
&=
	\int_{\Omega}(\nabla V_{K_\epsilon,\varphi_0} \cdot \nabla (\varphi_0\eta_\epsilon)+c V_{K_\epsilon,\varphi_0} \varphi_0\eta_\epsilon)\dx\\
& =
	\int_{\Omega\cap\mathcal U_0}(\nabla V_{K_\epsilon,\varphi_0} \cdot \nabla (\varphi_0\eta_\epsilon)+c V_{K_\epsilon,\varphi_0} \varphi_0\eta_\epsilon)\dx.
        \end{align*}
Then, by the change of variable $x=\Phi^{-1}(\epsilon y)$, we obtain 
	\begin{equation}\label{eq:8}
\Capa_{\bar{\Omega},c}(K_\epsilon,\varphi_0)=
\epsilon^{2\gamma+N-2}
	\int_{B_{\tilde{R}}^+}\big(A(\epsilon y)\nabla 
\tilde{V}_\epsilon(y)\cdot\nabla (\eta \tilde{\phi}_\epsilon )(y)+\epsilon^2
\hat{c}(\epsilon y)\tilde{V}_\epsilon(y)\eta(y)\tilde{\phi}_\epsilon (y)\big)\dy
\end{equation}
where $\tilde{V}_\epsilon$ and $\tilde{\phi}_\epsilon$ are defined in
\eqref{eq:def_tilde_phi_V}.   
From Proposition \ref{prop:blow_up}
and Proposition \ref{prop:vanish_phi_0}
it follows that 
	\begin{equation}\label{eq:blow_up_1}
\lim_{\epsilon\to0}
\int_{B_{\tilde{R}}^+}\big(A(\epsilon y)\nabla 
\tilde{V}_\epsilon(y)\cdot\nabla (\eta \tilde{\phi}_\epsilon )(y)+\epsilon^2
\hat{c}(\epsilon y)\tilde{V}_\epsilon(y)\eta(y)\tilde{\phi}_\epsilon (y)\big)\dy
=\int_{\R^N_+}\nabla\tilde{V}\cdot\nabla (\eta\psi_\gamma)\dy.
\end{equation}
Finally, testing the equation satisfied by $\tilde{V}$ (see Lemma \ref{lemma:limit_pro}) with $\tilde{V}-\eta\psi_\gamma\in\mathcal{D}^{1,2}(\overline{\R^N_+}\setminus K)$ we have that
	\[
	\mathrm{cap}_{\overline{\R^N_+}}(K,\psi_\gamma)=\int_{\R^N_+}|\nabla \tilde{V}|^2\dx =\int_{\R^N_+}\nabla\tilde{V}\cdot\nabla (\eta\psi_\gamma)\dx.
	\]
	This, combined with \eqref{eq:8} and \eqref{eq:blow_up_1}, concludes the proof.
\end{proof}

\section{Set scaling to an interior point}\label{sec:blow_up_general}

In this last section we consider the case in which the perturbing sets $K_\epsilon$ are concentrating to an interior point in a way that resembles (and comprehends) the scaling of a fixed compact set and we sketch the steps that lead to the proof of Theorem \ref{thm:blow_up_3} (counterpart of Theorem \ref{thm:blow_up}). Always in the case $N\geq 3$, we assume that $0\in\Omega$ and that the family of compact sets $K_\epsilon\sub \Omega$ satisfy 
\eqref{eq:hp_blow_up_1_omega_intr} and \eqref{eq:hp_blow_up_2_omega_intr}.
Heuristically speaking, in the previous section the rescaled domain $\Omega/\epsilon$ was ``approaching'' the half space $\R^N_+$, due to the fact that $0\in\partial\Omega$ and that $\partial\Omega$ was  smooth in a neighbourhood of the origin. In this section, since $0\in\Omega$ the ``limit'' domain of $\Omega/\epsilon$ turns out to be the whole space $\R^N$. For the same reason the role of half balls $B_R^+$ is played, in this section, by balls $B_R$.

Let $\kappa$ and $\zeta_\kappa$ be as in \eqref{eq:psi_hat_gamma}. As in Lemma \ref{lemma:big_O_cap}, by testing $\Capa_{\bar{\Omega},c}(K_\epsilon,\phi_0)$ with $\phi_0$ suitably  cutted off, it is possible to prove that
\begin{equation*}
	\Capa_{\bar{\Omega},c}(K_\epsilon,\phi_0)=O(\epsilon^{N+2\kappa-2}),\quad\text{as }\epsilon\to 0.
\end{equation*}
Also in this framework, a Poincar\'e type inequality holds and the proof follows the same steps as Lemma \ref{lemma:poincare}.
\begin{lemma}[Poincar\'e Inequality]\label{lemma:poincare_1}
	Let $M,K\sub \R^N$ and 
$\{K_\epsilon\}_{\epsilon\in(0,1)}$ satisfy 
\eqref{eq:hp_blow_up_1_omega_intr} and \eqref{eq:hp_blow_up_2_omega_intr} and let 
$\tilde{K}_\epsilon:=K_\epsilon/\epsilon$. Let us assume that $\mathrm{cap}_{\R^N}(K)>0$. For any $R>r(M)$ there exist $\epsilon_0\in(0,1)$ and $C>0$ (both depending on
 $R$ and $K$) such that
	\[
	\int_{B_R}u^2\dx\leq C\int_{B_R}\abs{\nabla u}^2\dx
	\]
	for all $u\in \HO{B_R}{\tilde{K}_\epsilon}$ and for all $\epsilon<\epsilon_0$.
\end{lemma}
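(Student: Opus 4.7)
The plan is to mimic the proof of Lemma \ref{lemma:poincare} verbatim, replacing the half-balls $B_R^+$ by the full balls $B_R$ and the reference capacity $\mathrm{cap}_{\overline{\R^N_+}}(K)$ by $\mathrm{cap}_{\R^N}(K)$. The assumption $0\in\Omega$ interior and $K_\epsilon\subset\Omega$ for small $\epsilon$ together with \eqref{eq:hp_blow_up_1_omega_intr} ensures that, for every $R>r(M)$, the scaled sets $\tilde K_\epsilon$ are compactly contained in $B_R$ for $\epsilon$ small enough, so no boundary interaction occurs.

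First I argue by contradiction: assume there exist $R>r(M)$, a sequence $\epsilon_n\to 0^+$ and functions $u_n\in \HO{B_R}{\tilde{K}_{\epsilon_n}}$ with $\int_{B_R}u_n^2\dx>n\int_{B_R}|\nabla u_n|^2\dx$. Normalizing by $v_n:=u_n/\|u_n\|_{L^2(B_R)}$ gives a bounded sequence in $H^1(B_R)$ with $\|v_n\|_{L^2(B_R)}=1$ and $\int_{B_R}|\nabla v_n|^2\dx<1/n$. Up to a subsequence, $v_n\rightharpoonup v$ weakly in $H^1(B_R)$ and strongly in $L^2(B_R)$ by Rellich's theorem, so $\|v\|_{L^2(B_R)}=1$; weak lower semicontinuity forces $\int_{B_R}|\nabla v|^2\dx=0$, hence $v\equiv\kappa$ a.e. on $B_R$ for some nonzero constant $\kappa$. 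By the obvious analog of Remark \ref{rmk:equiv_mosco} in the interior setting (established exactly as in the half-space case, via continuity of the extension-by-zero and restriction operators), hypothesis \eqref{eq:hp_blow_up_2_omega_intr} implies that $\HO{B_R}{\tilde K_{\epsilon_n}}$ converges in the sense of Mosco to $\HO{B_R}{K}$, so the weak limit $v=\kappa$ lies in $\HO{B_R}{K}$. Dividing by $\kappa$ produces a sequence $\psi_n\in C_c^\infty(\overline{B_R}\setminus K)$ with $\psi_n\to 1$ in $H^1(B_R)$.

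To reach a contradiction with $\mathrm{cap}_{\R^N}(K)>0$, I use the fact that $K\subset B_R$ is compactly contained and pick a cutoff $\eta_K\in C_c^\infty(B_R)$ with $\eta_K\equiv1$ in a neighbourhood of $K$. Consider $u_n:=\eta_K(1-\psi_n)$, trivially extended by zero to $\R^N$. Since $\psi_n$ vanishes near $K$ and $\eta_K\equiv1$ near $K$, $u_n$ equals $\eta_K$ in a neighbourhood of $K$; moreover $u_n-\eta_K=-\eta_K\psi_n$ is smooth and compactly supported in $\R^N\setminus K$, hence belongs to $\mathcal{D}^{1,2}(\R^N\setminus K)$. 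Thus $u_n$ is admissible in the definition of $\mathrm{cap}_{\R^N}(K)$, and
\[
\mathrm{cap}_{\R^N}(K)\leq\int_{\R^N}|\nabla u_n|^2\dx=\int_{B_R}|\nabla(\eta_K(1-\psi_n))|^2\dx\longrightarrow 0
\]
as $n\to\infty$, because $1-\psi_n\to 0$ in $H^1(B_R)$. This contradicts $\mathrm{cap}_{\R^N}(K)>0$ and concludes the proof.

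The argument is essentially routine given the half-space template; the only non-trivial point is the passage in the last step from $\kappa\in\HO{B_R}{K}$ to $\mathrm{cap}_{\R^N}(K)=0$. In the half-space case this was handled by invoking the comparison Lemma \ref{lemma:comparison_capa}, and an analogous comparison result (for interior $K$) can be obtained by the same restriction/extension argument; the direct computation above avoids stating such a lemma separately, which I expect to be the most delicate verification to double-check, since one must ensure that the support of $\eta_K\psi_n$ stays compactly inside $\R^N\setminus K$, a fact guaranteed by $\mathrm{supp}\,\eta_K\subset B_R$ and $\psi_n\equiv 0$ near $K$.
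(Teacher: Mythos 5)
Your proof is correct and follows the paper's intended route, which is simply to replicate the argument of Lemma \ref{lemma:poincare} with half-balls replaced by balls. The only deviation is that, at the final step, rather than invoking a ball analogue of the comparison Lemma \ref{lemma:comparison_capa} to pass from $\Capa_{\overline{B_R}}(K)=0$ to $\mathrm{cap}_{\R^N}(K)=0$, you inline that argument by directly exhibiting admissible competitors $\eta_K(1-\psi_n)\in C^\infty_c(\R^N)$ with $\eta_K(1-\psi_n)-\eta_K=-\eta_K\psi_n\in C_c^\infty(\R^N\setminus K)$ and vanishing Dirichlet energy; this is equally valid and, if anything, slightly more self-contained since it avoids stating a separate lemma whose proof the paper also only sketches.
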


Furthermore, the capacity $\capa_{\R^N}(K,\zeta_\kappa)$, whose definition is recalled in Definition \ref{def:newt_capa}, is attained by a potential $\widehat{V}\in\mathcal{D}^{1,2}(\R^N)$, analogously to what is stated in Lemma \ref{lemma:limit_pro}. Also in this context it is possible to prove an Hardy type inequality, which reads as follows.

\begin{lemma}[Hardy-type inequality]\label{lemma:hardy_1}
	We have that
	\[
	\frac{N-2}{2}\int_{B_R}\frac{u^2}{\abs{x}^2}\dx\leq \frac{N+1}{R^2}\int_{B_R}u^2\dx+\frac{N}{N-2}\int_{B_R}\abs{\nabla u}^2\dx
	\]
	for all $u\in H^1(B_R)$ and for all $R>0$.
\end{lemma}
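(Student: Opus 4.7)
The plan is to mimic the proof of Lemma \ref{lemma:hardy} almost verbatim, with the simplification that, since we are working on the full ball $B_R$ rather than the half-ball $B_R^+$, the boundary $\partial B_R=S_R$ has no ``flat'' portion to contend with: on all of $S_R$ one has $x=R\,\nnu$. Consequently the proof reduces to two divergence identities plus Young's inequality, carried out first on smooth functions and then extended by density.

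First I would take $u\in C^\infty(\overline{B_R})$ and integrate the pointwise identity
\[
\dive\!\left(u^2\frac{x}{\abs{x}^2}\right)=(N-2)\frac{u^2}{\abs{x}^2}+2u\,\nabla u\cdot\frac{x}{\abs{x}^2}
\]
over $B_R$. Since $x\cdot\nnu=R$ on $S_R$, the divergence theorem and Young's inequality with parameter $\tfrac{N-2}{2}$ yield
\[
(N-2)\int_{B_R}\frac{u^2}{\abs{x}^2}\dx\leq \frac{1}{R}\int_{S_R}u^2\ds+\frac{N-2}{2}\int_{B_R}\frac{u^2}{\abs{x}^2}\dx+\frac{2}{N-2}\int_{B_R}\abs{\nabla u}^2\dx,
\]
which after rearrangement gives
\[
\frac{N-2}{2}\int_{B_R}\frac{u^2}{\abs{x}^2}\dx\leq \frac{1}{R}\int_{S_R}u^2\ds+\frac{2}{N-2}\int_{B_R}\abs{\nabla u}^2\dx.
\]

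Next I would control the boundary integral $\int_{S_R}u^2\ds$ by integrating the second identity $\dive(u^2x)=2u\,\nabla u\cdot x+Nu^2$ on $B_R$ and using again $x\cdot\nnu=R$ on $S_R$, together with the elementary estimate $2u\,\nabla u\cdot x\leq u^2+|x|^2|\nabla u|^2\leq u^2+R^2|\nabla u|^2$; this produces
\[
\int_{S_R}u^2\ds\leq \frac{N+1}{R}\int_{B_R}u^2\dx+R\int_{B_R}\abs{\nabla u}^2\dx.
\]
Plugging this into the previous estimate and simplifying $1+\tfrac{2}{N-2}=\tfrac{N}{N-2}$ gives exactly the claimed inequality for $u\in C^\infty(\overline{B_R})$. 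Finally, by density of $C^\infty(\overline{B_R})$ in $H^1(B_R)$ and Fatou's lemma applied to the (possibly infinite) left-hand side, the inequality extends to all $u\in H^1(B_R)$. I do not expect any real obstacle here: everything is parallel to Lemma \ref{lemma:hardy}, only cleaner due to the absence of a flat boundary piece.
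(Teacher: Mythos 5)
Your proposal is correct and is precisely the adaptation of the paper's proof of Lemma \ref{lemma:hardy} to the full ball, which is exactly what the paper indicates (it gives no separate proof for Lemma \ref{lemma:hardy_1}, stating only that it follows analogously). The two divergence identities, the use of $x\cdot\nnu=R$ on $S_R$, Young's inequality, and the constant bookkeeping all check out, and your explicit density step at the end is a harmless addition beyond what the paper records for Lemma \ref{lemma:hardy}.
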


Following the same steps as in the proof of Proposition \ref{prop:blow_up} and Theorem \ref{thm:blow_up} and adapting the ideas and the computations to the current framework, it is possible to prove Theorem \ref{thm:blow_up_3}.

\section*{Acknowledgments}
\noindent 
The authors acknowledge the support of INdAM and  CNRS-PICS project n. PICS08262 entitled ``VALeurs propres d'un op\'erateur Aharonov-Bohm avec p\^oLE variable-VALABLE''.
V. Felli is partially supported by the PRIN 2015
grant ``Variational methods, with applications to problems in
mathematical physics and geometry''.
B. Noris was partially supported by the INdAM-GNAMPA Project 2019 ``Il modello di Born-Infeld per l'elettromagnetismo nonlineare: esistenza, regolarit\`a e molteplicit\`a di soluzioni''.

\end{document}